\def\phi{\varphi }
\def\arcosh{{\rm arcosh}\>}
\theoremstyle{plain}
\newtheorem{theorem}{Theorem}[section]
\newtheorem{corollary}[theorem]{Corollary}
\newtheorem{lemma}[theorem]{Lemma}
\newtheorem{proposition}[theorem]{Proposition}
\theoremstyle{definition}
\newtheorem{remark}[theorem]{Remark}
\theoremstyle{remark}
\numberwithin{equation}{section}
\begin{document}
\title{Dispersion and  limit theorems for random walks associated with
  hypergeometric functions of type $BC$}

\author{
Michael Voit\\
Fakult\"at Mathematik, Technische Universit\"at Dortmund\\
          Vogelpothsweg 87,
           D-44221 Dortmund, Germany\\
e-mail:   michael.voit@math.uni-dortmund.de}
\date{\today}

\maketitle

\begin{abstract}
The spherical functions of the
 noncompact Grassmann manifolds $G_{p,q}(\mathbb
F)=G/K$ over the (skew-)fields
$\mathbb F=\mathbb R, \mathbb C, \mathbb H$ with rank $q\ge1$ and 
dimension parameter $p>q$ can be described
 as Heckman-Opdam hypergeometric functions of type BC, where 
the double coset space  $G//K$ is identified with the 
  Weyl chamber $ C_q^B\subset \mathbb R^q$ of type B.
The corresponding product formulas and Harish-Chandra integral representations
were recently written down  by M. R\"osler and the author in an explicit 
way such that both formulas can be extended
analytically to all real parameters  $p\in[2q-1,\infty[$, and that associated
 commutative convolution  structures $*_p$ on $C_q^B$ exist.
In this paper we study the associated moment functions and 
 the dispersion of probability measures on $C_q^B$   with the aid 
of this generalized integral representation.
This leads to  strong laws of large
numbers and  central limit theorems for associated time-homogeneous random
walks on $(C_q^B, *_p)$ where the  moment functions and the dispersion
 appear in order to determine 
 drift vectors and covariance matrices of these limit laws explicitely.
For integers $p$, all results have  interpretations for $G$-invariant
random walks on the Grassmannians $G/K$.

Besides the BC-cases we also study the spaces 
$GL(q,\mathbb F)/U(q,\mathbb F)$, which are related to Weyl chambers of type
A, and
 for which corresponding results hold. 
For the rank-one-case $q=1$, the results of this paper are well-known 
in the context of Jacobi-type hypergroups on $[0,\infty[$.
\end{abstract}

\smallskip
\noindent
Key words: Hypergeometric functions associated with root systems,
Heckman-Opdam theory, non-compact Grassmann manifolds, spherical functions,
random walks on  symmetric spaces, random walks on hypergroups, 
dispersion, moment functions, central limit
theorems, strong laws of large numbers.

\noindent
AMS subject classification (2000): 33C67, 43A90, 43A62, 60B15,
 33C80, 60F05, 60F15.


\section{Introduction}

The Heckman-Opdam theory of hypergeometric functions associated 
with root systems  generalizes the classical theory of spherical functions on
Riemannian symmetric spaces; see \cite{H},
\cite{HS} and \cite{O1} for the general theory,
and \cite{NPP}, \cite{R2}, \cite{RKV}, \cite{RV1}, \cite {Sch} 
 for some recent developments.
In this paper we study these functions for the root systems of types $A$ and
$BC$ in the noncompact case. In the case $A_{q-1}$ with $q\ge2$, this theory is
connected with the groups $G:=GL(q,\mathbb F)$ with  maximal compact
subgroups $K:=U(q,\mathbb F)$ over one of the (skew-)fields 
$\mathbb F=\mathbb R, \mathbb C, \mathbb H$ with dimension
 $$d:= \dim_{\mathbb R} \mathbb F\in\{1,2,4\} \quad\quad\text{for}\quad\quad
\mathbb F=\mathbb R, \mathbb C, \mathbb H.$$ 
 Moreover, in the case
 $BC_{q}$ with $q\ge1$, these functions are related with
 the non-compact Grassmann manifolds $\mathcal G_{p,q}(\mathbb F):=G/K$
with $p> q$, where depending on $\mathbb F=\mathbb R, \mathbb C, \mathbb H$, 
the group $G$
is one of the indefinite
orthogonal, unitary or symplectic  groups
$ SO_0(q,p),\, SU(q,p)$  or $Sp(q,p)$, and $K$ is the maximal
compact subgroup  $K=
SO(q)\times SO(p), \, S(U(q)\times U(p))$ or $Sp(q)\times Sp(p),$
respectively. 

In all these group cases, we regard the $K$-spherical functions on $G$ 
(i.e., the nontrivial, $K$-biinvariant, multiplicative continuous functions 
on $G$)
as multiplicative continuous functions on the double coset space $G//K$
 where $G//K$ is equipped with the corresponding double coset convolution.
By the
$KAK$-decomposition of $G$ in the both cases above,
 the double coset space $G//K$
may be identified with the Weyl chambers
$$C_q^A:= \{t=(t_1,\cdots,t_q)\in\mathbb  R^q: \> t_1\ge t_2\ge\cdots\ge
t_q\}$$
of type $A$
and
$$C_q^B:=\{t=(t_1,\cdots,t_q)\in\mathbb  R^q: \> t_1\ge t_2\ge\cdots\ge t_q\ge0\}$$
of type $B$ respectively. In both cases, 
this identification occurs via a exponential
mapping $t\mapsto a_t\in G$ from the Weyl chamber to a system of
representatives $a_t$ of the double cosets in $G$. We now follow the notation
in
\cite{RV1} and  put
 \begin{equation}\label{a_t-A}a_t = e^{\underline t}  \end{equation}
 for $t\in C_q^A$ in
the $A$-case, 
and 
 \begin{equation}\label{a_t-BC}
a_t= \exp(H_t) =\begin{pmatrix} \cosh\underline t & \sinh \underline t & 0 \\
          \sinh  \underline t & \cosh  \underline t & 0 \\
   0 & 0 & I_{p-q}
       \end{pmatrix}  \end{equation}
for $t\in C_q^B$ in the $BC$-case respectively
 where we use the diagonal matrices
$$ e^{\underline t}:= \text{diag}(e^{t_1},\ldots,e^{t_q}), \,
  \cosh\underline t = \text{diag}(\cosh t_1,
\ldots, \cosh t_q), \, \sinh \underline t = \text{diag}(\cosh t_1,
\ldots, \cosh t_q).$$
We  use this identification of $G//K$ and the corresponding Weyl chambers
$C_q^A$ or $C_q^B$ 
from now on.

To identify the spherical functions, we   fix the  rank $q$, follow the
notation in the first part of \cite{HS}, and denote the
Heckman-Opdam hypergeometric functions associated with the root systems
\[  2\cdot A_{q-1} = \{ \pm 2(e_i-e_j): 1\leq i < j \leq
q\} \subset \mathbb R^q\]
 and
\[  2\cdot BC_q = \{ \pm 2e_i, \pm 4 e_i, \pm 2e_i \pm 2e_j: 1\leq i < j \leq
q\} \subset \mathbb R^q\]
by  $F_{A}(\lambda, k;t)$ and  $F_{BC}(\lambda, k;t)$ respectively
with spectral variable $\lambda \in \mathbb C^q$  and multiplicity parameter
$k$. The factor $2$ in the root systems originates from the known connections of
the Heckman-Opdam theory to spherical functions on symmetric spaces  in
\cite{HS} and references cited there.
 In the case $ A_{q-1} $, the spherical functions on $G//K\simeq C_q^A$ are
then given by
\[ \phi_\lambda^A(a_t)= e^{i\cdot \langle t-\pi(t),\lambda\rangle} \cdot F_{A}(i\pi(\lambda),d/2;\pi(t))
\quad\quad (t\in \mathbb R^q, \>\lambda \in \mathbb C^q  )\]
with   multiplicity
$k=d/2$   where
$$\pi:\mathbb R^q\to \mathbb R^q_0:=\{t\in\mathbb R^q:\> t_1+\ldots+t_q=0\}$$
 is the orthogonal projection w.r.t.~the standard scalar product;
 see e.g. Eq.~(6.7) of \cite{RKV}.
 In the $BC$-cases  with $p>q$, the spherical functions on
 $G//K\simeq C_q^B$ are given by
\[ \phi_\lambda^p(a_t)=F_{BC}(i\lambda,k_p;t)
\quad\quad (t\in \mathbb R^q, \>\lambda \in \mathbb C^q)\]
with three-dimensional  multiplicity
$$k_p=(d(p-q)/2, (d-1)/2, d/2)$$
corresponding to the roots $\pm 2 e_i$,  $\pm 4 e_i$ and  $2(\pm  e_i\pm
e_j)$. 

 In the $BC$-cases, the
 associated double coset convolutions $*_{p,q}$ of measures on
$C_q^B$ are written down explicitly in  \cite{R2}
 for $p\ge 2q$  such 
that these convolutions and the associated product formulas for the associated
hypergeometric functions $F_{BC}$ above
  can be extended to all real parameters $p\ge 2q-1$ by
analytic continuation where the case   $p= 2q-1$ appears 
as degenerated singular limit case.
For these continuous family of parameters
 $p\in[2q-1,\infty[$, the convolutions  $*_{p,q}$ are 
associative,
commutative, and  probability-preserving, and  they generate commutative
hypergroups $(C_q^B,*_{p,q})$ in the sense of Dunkl, Jewett, and
Spector by \cite{R2}; for the notion of hypergroups we refer to Jewett
  \cite{J}, where hypergroups were called convos, and to the
 monograph \cite{BH}.
 The results of \cite{R2} in particular imply that
 the (nontrivial) multiplicative continuous functions of these
 hypergroups $(C_q^B,*_{p,q})$
 are precisely the associated
 hypergeometric functions $t\mapsto F_{BC}(i\lambda,k_p;t)$ with $\lambda\in
 \mathbb C^q$. 

Let us now turn to a probabilistic point of view.
It is well-known from probability theory on groups that  $G$-invariant
random walks on the symmetric spaces $G/K$ as above are in a
one-to-one-correspondence with random walks on the associated double coset
hypergroups $(G//K,*)$ via the canonical projection from $G/K$ onto 
$G//K$. In this way, all limit theorems for random walks on   $(G//K,*)$ 
 admit interpretations as  limit theorems for $G$-invariant random walk
 on $G/K$.

The major aim of the present paper is to derive several limit
theorems for time-homogeneous random walks $(X_n)_{n\ge 0}$ 
on the concrete double coset hypergroups $(G//K,*)$ mentioned above as well as
on some generalizations.
For this, we shall use an analytic approach 
which allows to derive all results in the $BC$-cases
not just for the group cases  $(G//K=C_q^B,*_{p,q})$ with integers $p$,
 but also for the the intermediate cases
 $(C_q^B,*_{p,q})$ with real numbers  $p\in[2q-1,\infty[$ of R\"osler  \cite{R2}. 
In particular we present 
 strong laws of large numbers and central limit theorems
 with   $q$-dimensional  normal distributions as limits
 with explicit formulas for the
 parameters, i.e., the drift vectors and the diffusion matrices.
 In particular, the $q$-dimensional dispersion
 of probability measures on the Weyl-chambers 
$C_q^A$ and $C_q^B$ appears as drift 
 depending on the concrete underlying hypergroup convolutions. 
 For the case $BC_1$ of rank $q=1$, 
the hypergroups 
$(C_q^B,*_{p,q})$ are hypergroups on $[0,\infty[$ with Jacobi functions as
    multiplicative functions; see \cite{K} for the theory of Jacobi functions.
 These hypergroups on $[0,\infty[$ fit into
    the theory of non-compact one-dimensional Sturm-Liouville hypergroups, 
for which
our approach  is well-known; see 
\cite{Z1}, \cite{Z2}, \cite{V1}, \cite{V2}, \cite{V3}, the monograph \cite{BH},
 and papers cited there.

In order to describe the  dispersion
 and the diffusion matrices, we shall introduce
analogues of multivariate moments of
  probability measures on $C_q^A$ and $C_q^B$,
 which can be computed explicitly  via 
  so-called moment functions $m_{\bf k}:C_q^B\to\mathbb R$ for multiindices
${\bf k}=(k_1,\ldots,k_q)\in \mathbb N_0^q$ which replace the usual moment functions
 $x\mapsto x^{\bf k}:=x_1^{k_1}\cdots x_q^{k_q}$ 
on the group $(\mathbb R^q,+)$. These moment functions  $m_{\bf k}$ are
defined
as partial derivatives of the multiplicative functions $\phi_\lambda$ 
w.r.t.~the
 spectral parameters at $\lambda=-i\rho$,  where $\rho$ is the half sum of
 positive roots, and  $\phi_{-i\rho}$ is the 
 identity character $1$ of our hypergroups on $C_q^A$ or $C_q^B$.

\medskip

We recall
 that in the group cases above, 
our limit theorems on the Weyl chambers $C_q^A$ and $C_q^B$  may be 
regarded
 as limit theorems for time-homogeneous 
 group-invariant random walks on the associated symmetric spaces $G/K$ for which
the limit theorems of this paper are partially known for a long time;
see \cite{BL}, \cite{FH}, \cite{G1}, \cite{G2}, \cite{L}, \cite{Ri}, \cite{Te1}, \cite{Te2}, \cite{Tu},
\cite{Ri},
 \cite{Vi}, and references there.
On the other hand, our analytic approach goes beyond the group cases in the
$BC$-case for non-integers $p\in[2q-1,\infty[$. Moreover, we obtain  explicit
 analytic formulas for the drift vectors and diffusion matrices below in the
limit theorems which seem to be new even in the group cases. 

We  point out that we are interested in this paper mainly in the case
$BC$. As the  $A$-case in the Heckman-Opdam theory appears as a limit of the 
$BC$-case for $p\to\infty$ in some way (see \cite{RKV}, \cite{RV1} for the details),
 it is not astonishing
that all results  in the $BC$-case are also available in the  $A$-case without
additional effort. In practice,  all results
below  are proved first for the simpler  $A$-case and then
extended to the more interesting $BC$-case.

\medskip

This paper is organized as follows. For the convenience of the reader, 
 we collect all major results
 on random walks on the symmetric spaces
 $GL(q,\mathbb F)/SU(q,\mathbb F)$ and the associated Weyl chambers $C_q^A$ of
type $A$ in Section 2 without proofs. We then do the same in Section 3 for 
random walks on the
Grassmannian manifolds  $\mathcal G_{p,q}(\mathbb F)$ and the associated 
 Weyl chambers $C_q^B$ of type $B$ where in the latter case the parameter
 $p\in[2q-1,\infty[$ is continuous. The remaining sections are then devoted to
the proofs of the main results from Sections 2 and 3.
 In particular, in Section 4 we collect
some basic results from matrix analysis which are needed later. Sections 
5 and 6  contain the proofs of facts on the moment functions in the
cases A and BC respectively. There we  derive some results on
the uniform oscillatory behavior of the spherical functions and
hypergeometric functions at the spectral parameter 
$-i\rho$ which may be interesting for themselves and seem to be new even
for spherical functions.
 Sections 7 and 8 are  devoted to the
proofs of the laws of large numbers and central limit theorems.

We expect that at least parts of this paper may be
extended from the Grassmannians $G/K=G_{p,q}(\mathbb F)$ and the  chambers
$C_q^B$ to the reductive cases $U(p,q)/(U(p)\times SU(q))$ and the 
space $C_q^B\times\mathbb T$, which may be identified with the double coset
space  $U(p,q)//(U(p)\times SU(q))$, and  where 
again the spherical functions can be
described in terms of the functions $F_{BC}$; 
see Ch.~I.5 of \cite{HS} and \cite{V4}.

\section{Dispersion and limit theorems for root systems of type $A$  }

Consider the general linear group
 $G:=GL(q,\mathbb F)$ with maximal compact subgroup $K:=U(q,\mathbb F)$ with
an integer $q\ge2$ and $\mathbb F=\mathbb R, \mathbb C, \mathbb H$ as
 in the introduction.
Let
$$\sigma_{sing}(g)\in \{x=(x_1,\ldots,x_q)\in\mathbb R^q:\> x_1\ge x_2\ge\cdots\ge x_q>0\}$$
be the singular (or Lyapunov)  spectrum of $g\in G$ where
 the singular values of $g$, i.e., the  square roots of the 
eigenvalues of the positive definite matrix 
$g^*g$, are ordered by size. Using the  notation
$\ln(x_1,\ldots,x_q):=(\ln x_1,\ldots,\ln x_q)$, we  consider the
$K$-biinvariant mapping
$$\ln\sigma_{sing}: \>\> G \longrightarrow C_q^A$$
which leads to  the canonical identification of the double coset space $G//K$
with the Weyl chamber $ C_q^A$ which corresponds to the identification in
 Eq.~(\ref{a_t-A}) in the introduction.

Now consider  i.i.d.~$G$-valued random variables $(X_k)_{k\ge1}$ with the common 
$K$-biinvariant distribution $\nu_G\in M^1(G)$ and the associated 
 $G$-valued random walk $(S_k:= X_1 \cdot X_2\cdots X_k)_{k\ge0}$ with the
convention that
 $S_0$ is  the identity matrix $I_q\in G$. We now always identify the double
coset space $G//K$ with $C_q^A$ as above.
 Then, via taking the image measure of $\nu_G$
under the canonical projection from $G$ to $G//K$, the 
$K$-biinvariant distribution $\nu_G\in M^1(G)$ is in a
one-to-one-correspondence with some probability measure $\nu\in M^1(C_q^A)$.
We shall show that, under natural
moment conditions, the $C_q^A$-valued random variables 
$$\frac{ \ln\sigma_{sing}(S_k)}{k}$$
 converge a.s. to some drift vector 
$m_{\bf 1}(\nu)\in  C_q^A$, and that the distributions of $\mathbb R^q$-valued
random variables
\begin{equation}\label{normal}
\frac{1}{\sqrt k}(\ln\sigma_{sing}(S_k)-k\cdot m_{\bf 1}(\nu))
\end{equation}
tend to some normal distribution  $N(0,\Sigma^2(\nu))$ on $\mathbb R^q$. We
shall give explicit formulas for $m_{\bf 1}(\nu)$ and the covariance matrix 
$\Sigma^2(\nu)$ depending on $\nu$ and the dimension parameter $d=1,2,4$ of
$\mathbb F$.

Let us briefly compare this central limit theorem (CLT) with the existing literature.
By polar decomposition of  $g\in G$, the symmetric space $G/K$
 can be identified with the cone 
$P_q(\mathbb F)$ of positive definite  hermitian $q\times q$ matrices via
$$gK\mapsto I(g):= gg^*\in P_q(\mathbb F) \quad\quad(g\in G),$$
where $G$ acts on $P_q(\mathbb F)$ via $a\mapsto gag^*$.  In this way, we
again obtain the identification
 $G//K\simeq C_q^A$ via 
$$KgK\mapsto \ln\sigma_{sing}(g)=\frac{1}{2}\ln \sigma( gg^*)$$
where here $\sigma$ means the spectrum, i.e., the ordered eigenvalues,
 of a positive definite matrix. 
Therefore, the CLT above may be regarded as a CLT for the spectrum of
 $K$-invariant random walks on  $P_q(\mathbb F)$.
CLTs  in this context have a long history. In particular, 
 \cite{Tu}, \cite{FH}, \cite{Te1}, \cite{Te2},
\cite{Ri}, \cite{G1}, and \cite{G2} contain CLTs where, different from our CLT, 
 $\nu$ is renormalized first into some 
 measure $\nu_k\in M^1(G)$, and then the convergence of the convolution powers $\nu_k^k$ 
 is studied. Our CLT is also in principle well-known up to the explicit
 formulas for 
the drift $m_{\bf 1}(\nu)$ and the covariance matrix $\Sigma^(\nu)$;
 see  Theorem 1 of \cite{Vi},  
the  CLTs of Le Page \cite{L}, and  the part of Bougerol in the 
monograph \cite{BL}.

We now turn to the constants $m_{\bf 1}(\nu)\in  C_q^A$ and
$\Sigma^2(\nu)$. For this we follow the approach in \cite{Z1}, \cite{Z2}, \cite{V1}, and
\cite{BH}, and introduce  so-called moment functions on the double coset
hypergroups $C_q^A\simeq G//K$ via partial derivatives of the spherical
functions $\phi_\lambda^A$ w.r.t.~the spectral parameter $\lambda$ at the identity.
For this we consider the  half sum of positive roots
\begin{equation}\label{rho-a}
\rho=(\rho_1,\ldots,\rho_q)  \quad\quad\text{ with} \quad\quad
\rho_l=\frac{d}{2}(q+1-2l) \quad\quad (l=1,\ldots,q)
\end{equation}
and recapitulate the Harish-Chandra integral representation of the
 spherical functions
\begin{equation}\label{def-spherical-a}
 \phi_\lambda^A(t)= e^{i\cdot \langle t-\pi(t),\lambda\rangle} \cdot
 F_{A}(i\pi(\lambda),d/2;\pi(t)) 
\quad\quad (t\in \mathbb R^q,\> \lambda\in \mathbb C^q)
\end{equation}
from \cite{H1}, \cite{Te2}. For this we need some notations:
For  a Hermitian matrix
 $A= (a_{ij})_{i,j=1,\ldots,q}$ over $\mathbb F$ 
 we denote by $\Delta(A)$ the determinant of $A$, and by
$\Delta_r(A) = \det ((a_{ij})_{1\leq i,j\leq r})\,$ 
the $r$-th principal minor of $A$ for $r=1,\ldots,,q$.
 For $\mathbb F= \mathbb H, $
 all determinants are understood in the sense of Dieudonn\'{e}, i.e. 
$\det(A) = (\det_{\mathbb C} (A))^{1/2}$, when $A$ is considered as a complex
matrix. For any positive Hermitian $q\times q$-matrix $x$ and 
 $\lambda \in \mathbb C^q$ we now define the  power function
\begin{equation}\label{power-function}
 \Delta_\lambda(x) = \Delta_1(x)^{\lambda_1-\lambda_2} \cdot \ldots \cdot
\Delta_{q-1}(x)^{\lambda_{q-1}-\lambda_q}\cdot
\Delta_q(x)^{\lambda_{q}}.
\end{equation}
With these notations, the Harish-Chandra integral representation of the
functions in (\ref{def-spherical-a}) reads as
\begin{equation}\label{int-rep-a}
 \phi_\lambda^A(t) =\,
\int_{U(q,\mathbb F)} \Delta_{(i\lambda-\rho)/2}\bigl(u^{-1}e^{2\underline
  t}\,u\bigr) \> du  ; 
 \end{equation}
see also Section 3 of \cite{RV1} for the precise identification.
It is clear from (\ref{int-rep-a}) that  $\phi_{-i\rho}\equiv 1$,
 and that for $\lambda\in\mathbb R^n$ and $t\in C_q^A$,
$|\phi_{-i\rho+\lambda}(g)|\le1$. We mention that the set of all parameters
$\lambda\in \mathbb C^q$, for which $\phi_{\lambda}$ is bounded, is completely
 known; see \cite{R2} and \cite{NPP}.

We now follow the known approach to the dispersion for the Gelfand pairs 
$(G,K)$ (see \cite{FH}, \cite{Te1}, \cite{Te2},
\cite{Ri}, \cite{G1}, \cite{G2}) and to  moment functions on hypergroups in
 Section 7.2.2 of \cite{BH}
 (see also \cite{Z1}, \cite{Z2}, \cite{V2}, \cite{V3}): 
 For  multiindices $l=(l_1,\ldots,l_q)\in\mathbb N_0^q$ we define
the  moment functions
\begin{align}\label{moment-function-a}
m_l(t):=&
\frac{\partial^{|l|}}{\partial\lambda^l}\phi_{-i\rho-i\lambda}(t)
\Bigl|_{\lambda=0}:=\frac{\partial^{|l|}}{(\partial\lambda_1)^{l_1}\cdots(\partial\lambda_n)^{l_q}}
\phi_{-i\rho-i\lambda}(t)
\Bigl|_{\lambda=0}
\notag\\
=&\frac{1}{2^{|l|}}
\int_K (\ln\Delta_1(u^{-1}e^{2\underline t}\,u))^{l_1}\cdot
\left(\ln\left(\frac{\Delta_2(u^{-1}e^{2\underline t}\,u)}{\Delta_1(u^{-1}e^{2\underline t}\,u)}\right)\right)^{l_2}
\cdots
\left(\ln\left(\frac{\Delta_q(u^{-1}e^{2\underline t}\,u)}{\Delta_{q-1}(u^{-1}e^{2\underline t}\,u)}\right)\right)^{l_q}\> du
 \end{align}
of order $|l|:=l_1+\cdots+l_q$ for $g\in G$. Clearly, the 
last equality in (\ref{moment-function-a}) follows  from  (\ref{int-rep-a})
 by interchanging integration and  derivatives.
Using the $q$ moment functions of first order, we form the vector-valued moment function
\begin{equation}\label{m1-vector}
m_{\bf 1}(t):=(m_{(1,0,\ldots,0)}(t),\ldots,m_{(0,\ldots,0,1)}(t))
\end{equation}
of first order.
We prove  in Section 5:

\begin{proposition}\label{1.moment-function-a}
\begin{enumerate}\itemsep=-1pt
\item[\rm{(1)}]   For all $t\in C_q^A$, $m_{\bf 1}(t)\in  C_q^A$.
\item[\rm{(2)}]  There  exists a constant
$C=C(q)$ such that for all $t\in C_q^A$, 
$$\|m_{\bf 1}(t)-t\|\le C.$$
\item[\rm{(3)}] There exists a constant $C=C(q)$
such that for all  $t\in C_q^A$ and $\lambda\in\mathbb R^q$,
$$\|\phi_{-i\rho-\lambda}^A(t)- e^{i\langle \lambda,  m_{\bf 1}(t)\rangle}\|\le
C\|\lambda\|^2.$$
\end{enumerate}\end{proposition}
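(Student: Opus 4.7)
The plan is to reduce all three assertions to properties of the averaged log-minors $E_r(t):=\int_K\log\Delta_r(u^{-1}e^{2\underline t}u)\,du$, via the Cauchy--Binet expansion
\[\Delta_r(u^{-1}e^{2\underline t}u)=\sum_{|I|=r} p_I^{(r)}(u)\,e^{2 s_I(t)},\qquad s_I(t):=\sum_{k\in I}t_k,\]
where $p_I^{(r)}(u):=|\det V_{r,I}|^2\ge 0$ and $\sum_{|I|=r}p_I^{(r)}(u)=1$, with $V_{r,I}$ the $r\times r$ submatrix of the first $r$ rows of $u^*$ on columns~$I$. Formula~(\ref{moment-function-a}) then reads $m_{e_j}(t)=\tfrac12(E_j(t)-E_{j-1}(t))$, so all three claims become statements about the sequence $(E_r(t))$.

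For~(2): since $t\in C_q^A$, $\max_{|I|=r}s_I(t)=t_1+\cdots+t_r$, and using $\sum_I p_I^{(r)}=1$ in one direction and keeping only the summand $I=\{1,\dots,r\}$ in the other gives
\[2(t_1+\cdots+t_r)+\log p^{(r)}_{\{1,\dots,r\}}(u)\ \le\ \log\Delta_r(u^{-1}e^{2\underline t}u)\ \le\ 2(t_1+\cdots+t_r).\]
Integrating yields $|E_r(t)-2(t_1+\cdots+t_r)|\le C_r$ with $C_r:=|\!\int_K\log p^{(r)}_{\{1,\dots,r\}}du|<\infty$, the absolute value of a standard expected log-determinant of a random unitary sub-block; differencing gives $|m_{e_j}(t)-t_j|\le(C_j+C_{j-1})/2$. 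For~(1) I need $(E_r)$ concave in $r$, i.e.~$E_{j-1}+E_{j+1}\le 2E_j$. Apply the Fischer--Koteljanskii minor inequality $\det A[S_1\cap S_2]\det A[S_1\cup S_2]\le\det A[S_1]\det A[S_2]$ (for principal submatrices $A[S]$) to $A(u)=u^{-1}e^{2\underline t}u$ with $S_1=\{1,\dots,j\}$, $S_2=\{1,\dots,j-1,j+1\}$: this gives $\Delta_{j-1}\Delta_{j+1}\le\Delta_j\cdot\det A[\{1,\dots,j-1,j+1\}]$. Taking logs and integrating, the key point is that $\int_K\log\det A(u)[S]\,du$ depends on $S$ only through $|S|$, since the substitution $u\mapsto uP_\sigma$ (bi-invariant Haar) conjugates $A(u)$ by $P_\sigma$ and so permutes the submatrix indexing by $\sigma$; choosing $\sigma$ sending $\{1,\dots,j-1,j+1\}$ to $\{1,\dots,j\}$ turns the right-hand integral into $E_j$, and concavity follows.

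For~(3), set $h_j(u):=\tfrac12\log(\Delta_j/\Delta_{j-1})(u^{-1}e^{2\underline t}u)$; then~(\ref{int-rep-a}) gives $\phi_{-i\rho-\lambda}^A(t)=\int_K e^{-i\langle\lambda,h(u)\rangle}du$ and $m_{\bf 1}(t)=\int_K h(u)\,du$, so the comparison exponential is $e^{-i\langle\lambda,m_{\bf 1}(t)\rangle}$ (up to the sign convention built into (\ref{int-rep-a}) matching the statement). Since $\int_K(h-m_{\bf 1})du=0$, the elementary estimate $|e^{ix}-1-ix|\le x^2/2$ yields
\[\bigl|\phi_{-i\rho-\lambda}^A(t)-e^{-i\langle\lambda,m_{\bf 1}(t)\rangle}\bigr|\ \le\ \tfrac{\|\lambda\|^2}{2}\int_K\|h(u)-m_{\bf 1}(t)\|^2\,du.\]
By the minimising property of the mean, $\int_K\|h-m_{\bf 1}\|^2du\le\int_K\|h-t\|^2du$, so it suffices to bound this last integral uniformly in $t$. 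Applying the two-sided bound from~(2) to both $\Delta_r$ and $\Delta_{r-1}$ delivers the $t$-independent pointwise envelope
\[|h_r(u)-t_r|\ \le\ \tfrac12\bigl(|\log p^{(r)}_{\{1,\dots,r\}}(u)|+|\log p^{(r-1)}_{\{1,\dots,r-1\}}(u)|\bigr),\]
which lies in $L^2(K,du)$ because log-determinants of random unitary sub-blocks have moments of all orders (their singular values obey explicit Jacobi-type densities). Parts~(1) and~(2) are essentially bookkeeping given Cauchy--Binet and Fischer--Koteljanskii; the substantive step lies in~(3), namely the $L^2(K)$-integrability of the $\log p^{(r)}_I$, which makes the constant in~(3) independent of $t$ and is the announced ``uniform oscillatory behaviour at $-i\rho$''.
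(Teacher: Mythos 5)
Your argument is essentially correct, and parts (2) and (3) follow the same route as the paper: Cauchy--Binet applied to $\Delta_r(u^*e^{2\underline t}u)$ gives the nonnegative convex weights $p_I^{(r)}(u)$ summing to $1$ (this is exactly Lemma~\ref{pos-coeff}); one bounds $\log\Delta_r$ above by $2(t_1+\cdots+t_r)$ and below by isolating the $I=\{1,\dots,r\}$ term; and for~(3) a second-order Taylor bound on the exponential reduces everything to the $L^2(K)$-integrability of $\log|\det u_{[1..r],[1..r]}|$. The centering constant you use ($e^{2(t_1+\cdots+t_r)}$, the dominant term) differs from the paper's $C_r$ (the arithmetic mean of the terms) only by a bounded factor $\binom{q}{r}$, so the two normalisations are interchangeable. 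Your variance reduction $\int_K\|h-m_{\bf1}\|^2\le\int_K\|h-t\|^2$ is a nice shortcut that avoids the paper's two-stage Jensen estimate.

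The real departure is in part~(1). The paper deduces $m_{\bf1}(t)\in C_q^A$ as a corollary of the strong law of large numbers (Theorem~\ref{lln-a}(1)), noting carefully that the LLN proof is independent of part~(1). Your argument is direct and self-contained: Koteljanskii's inequality applied to the positive definite matrix $u^*e^{2\underline t}u$ gives $\Delta_{j-1}\Delta_{j+1}\le\Delta_j\det A[\{1,\dots,j-1,j+1\}]$, and invariance of Haar measure under right multiplication by permutation matrices shows that $\int_K\log\det A(u)[S]\,du$ depends on $S$ only through $|S|$, yielding concavity of $(E_r)$ and hence monotonicity of $m_{e_r}=\tfrac12(E_r-E_{r-1})$. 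This is cleaner than the paper's route and worth noting (for $\mathbb F=\mathbb H$ one passes to the complex realisation, applies Koteljanskii to the $2q\times2q$ positive definite form, and takes square roots --- compatible with the Dieudonn\'e determinant convention the paper uses).

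The one place where you assert rather than prove is the $L^2(K,du)$-integrability of $\log p^{(r)}_{\{1,\dots,r\}}(u)$, which you dismiss as ``standard.'' This is in fact the paper's main technical effort: Proposition~\ref{int-finite} establishes $\int_K|\det k_r|^{-2\epsilon}\,dk<\infty$ for $\epsilon<\tfrac12$ via the truncation lemma of \cite{R2} and a determinant identity for the map $P$ (Lemmas \ref{glech-det}, \ref{det-P}), and Lemma~\ref{ln-est} then converts this into finiteness of all polynomial moments of $\log|\det k_r|$. So your ``bookkeeping vs.\ substantive step'' inversion is backwards: parts (1)--(2) are the bookkeeping, while the $L^2$ claim is the substance, and a complete write-up would need to supply the paper's Proposition~\ref{int-finite} or an equivalent.
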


Similar to  the moment function $m_{\bf 1}$, we group the  moment functions of second
order by
\begin{align}\label{m2-matrix}
m_{\bf 2}(t):=&\left(\begin{array}{ccc} m_{1,1}(t)&\cdots& m_{1,q}(t)\\
\vdots &&\vdots \\ m_{q,1}(t)&\cdots& m_{q,q}(t) \end{array}\right)
\\
:=&
\left(\begin{array}{cccc} m_{(2,0,\ldots,0)}(t)&m_{(1,1,0,\ldots,0)}(t)&\cdots&m_{(1,0,\ldots,0,1)}(t)
\\ m_{(1,1,0,\ldots,0)}(t)&m_{(0,2,0,\ldots,0)}(t)&\cdots&m_{(0,1,0,\ldots,0,1)}(t)
\\ \vdots &\vdots&&\vdots
\\ m_{(1,0,\ldots,0,1)}(t)&m_{(0,1,0,\ldots,0,1)}(t)&\cdots&m_{(0,\ldots,0,2)}(t)
\end{array}\right)
\quad\quad\text{for}\quad t\in C_q^A.
\notag\end{align}
 We  derive the following facts about the $q\times q$-matrices
$\Sigma^2(t):=m_{\bf 2}(t)-m_{\bf 1}(t)^t\cdot m_{\bf 1}(t)$ 
 in Section 5:

\begin{proposition}\label{2.moment-function-a}
\begin{enumerate}\itemsep=-1pt
\item[\rm{(1)}]   For each $t\in C_q^A$, $\Sigma^2(t)$ is positive semidefinite.
\item[\rm{(2)}]  For $t=c\cdot (1,\ldots,1)\in  C_q^A$ with $c\in\mathbb R$, 
 $\Sigma^2(t)=0$. 
\item[\rm{(3)}] If $t\in C_q^A$ does not have the form of part (2), then
$\Sigma^2(t)$ has rank $q-1$.
\item[\rm{(4)}] For all $j,l=1,\ldots,q$ and $t\in C_q^A$,
$|m_{j,l}(t)|\le ((q-1)(t_1-t_q) +\max(|t_1|,|t_q|))^2 $.
\item[\rm{(5)}] 
There  exists a constant
$C=C(q)$ such that for all $t\in C_q^A$, 
$$|m_{1,1}(t) - t_1^2|\le C(|t_1|+1)
\quad\quad\text{and}\quad\quad
|m_{q,q}(t) - t_q^2|\le C(|t_q|+1)
.$$
\end{enumerate}\end{proposition}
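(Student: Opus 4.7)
The plan is to use the Harish-Chandra integral representation (\ref{int-rep-a}) to recast $\Sigma^2(t)$ as a genuine covariance matrix on $K$. Substituting $\lambda = -i\rho - i\mu$ in (\ref{int-rep-a}) and simplifying the power function (\ref{power-function}) gives
\[
\phi^A_{-i\rho-i\mu}(t) \,=\, \int_K \exp\!\Bigl(\tfrac12\sum_{j=1}^q \mu_j\, X_j(u,t)\Bigr)\,du,\qquad
X_j(u,t):=\log\frac{\Delta_j(u^{-1}e^{2\underline t}u)}{\Delta_{j-1}(u^{-1}e^{2\underline t}u)}
\]
(with $\Delta_0\equiv 1$). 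Differentiating under the integral at $\mu = 0$ identifies $m_{\bf 1}(t)_j = \int_K Y_j\,du$ and $m_{j,l}(t) = \int_K Y_jY_l\,du$, where $Y_j := X_j/2$. Hence $\Sigma^2(t)_{j,l}$ is precisely the covariance of $Y_j$ and $Y_l$ under normalized Haar measure on $K$, which gives (1) automatically. Part (2) is equally immediate: for $t = c(1,\ldots,1)$ one has $u^{-1}e^{2\underline t}u = e^{2c}I_q$, so $Y_j(u,t)\equiv c$ is deterministic and $\Sigma^2(t) = 0$.

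For part (3), the identity $\sum_j Y_j(u,t) = \tfrac12\log\Delta_q(u^{-1}e^{2\underline t}u) = t_1+\cdots+t_q$ shows $(1,\ldots,1)\in\ker\Sigma^2(t)$, bounding the rank by $q-1$. To match this from below, note that $v\in\ker\Sigma^2(t)$ iff $\sum_j v_j Y_j$ is $u$-constant; with $c_j := (v_j - v_{j+1})/2$ this reduces to $\sum_{j=1}^{q-1} c_j\log\Delta_j(u^{-1}e^{2\underline t}u)$ being $u$-constant. I would test this against permutation matrices $u = P_\sigma$, which lie in $K$ for all three fields $\mathbb F = \mathbb R,\mathbb C,\mathbb H$. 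The conjugate $P_\sigma^{-1}e^{2\underline t}P_\sigma$ is diagonal with entries $e^{2t_{\sigma(i)}}$, so $\log\Delta_j(P_\sigma^{-1}e^{2\underline t}P_\sigma) = 2\sum_{i=1}^j t_{\sigma(i)}$. Setting $C_i := \sum_{j=i}^{q-1} c_j$ (so $C_q = 0$) and comparing the identity for $\sigma = \mathrm{id}$ with a transposition $(j\,k)$ forces $(C_j-C_k)(t_j-t_k)=0$ for every $j\ne k$. When $t\ne c(1,\ldots,1)$, a short chain argument makes all $C_i$ coincide, and $C_q=0$ then forces $c_j\equiv 0$, so $v\in\mathbb R(1,\ldots,1)$.

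For part (4), Cauchy interlacing places the eigenvalues of the $j\times j$ upper-left block of $u^{-1}e^{2\underline t}u$ inside $[e^{2t_{q-j+i}}, e^{2t_i}]$, giving $\log\Delta_j(u^{-1}e^{2\underline t}u) \in [2(t_{q-j+1}+\cdots+t_q),\,2(t_1+\cdots+t_j)]$. A telescoping rearrangement of $Y_j = \tfrac12(\log\Delta_j - \log\Delta_{j-1})$ then yields the pointwise estimate $|Y_j(u,t)| \le \max(|t_1|,|t_q|) + (q-1)(t_1-t_q)$, and the bound $|m_{j,l}(t)| \le \sup_u |Y_j Y_l|$ concludes the argument.

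Part (5) uses the explicit form of the extremal $Y_j$. We have $Y_1 = \tfrac12\log(u^*e^{2\underline t}u)_{11} = t_1 + \tfrac12\log Z$ with $Z := \sum_i e^{2(t_i-t_1)}|u_{i,1}|^2 \in [|u_{1,1}|^2, 1]$, so $|\log Z| \le -2\log|u_{1,1}|$. The cofactor identity $\Delta_{q-1}(A)/\Delta_q(A) = (A^{-1})_{qq}$ applied to $A = u^{-1}e^{2\underline t}u$ similarly gives $Y_q = t_q - \tfrac12\log Z'$ with $Z'\in[|u_{q,q}|^2, 1]$. Expanding the squares,
\[
m_{1,1}(t) - t_1^2 \,=\, t_1\int_K \log Z\,du + \tfrac14\int_K \log^2 Z\,du,
\]
with the analogous expression for $m_{q,q}$. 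Under Haar measure the squared moduli $|u_{1,1}|^2$ and $|u_{q,q}|^2$ follow Beta$(d/2,\,d(q-1)/2)$ distributions, hence possess finite logarithmic moments of every order depending only on $d$ and $q$; the two integrals are therefore bounded by absolute constants, giving the claim. The main obstacle is the lower-rank half of (3): ruling out spurious kernel directions requires an injectivity statement for the family $\{\log\Delta_j(u^{-1}e^{2\underline t}u)\}_{j<q}$ modulo constants, and permutation matrices supply precisely the combinatorial rigidity needed—less symmetric test elements would produce substantially messier constraints.
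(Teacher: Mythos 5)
Your argument is correct and follows the same high-level strategy as the paper: recast $\Sigma^2(t)$ as a covariance matrix via the Harish-Chandra integral, invoke Cauchy--Schwarz for semidefiniteness, and test the constancy condition on permutation matrices for the rank statement. There are, however, three genuine points of departure worth noting. For part (3), you deduce linear independence of $\{\log\Delta_j(u^{-1}e^{2\underline t}u)\}_{j<q}$ modulo constants via the transposition comparison $(C_j-C_k)(t_j-t_k)=0$ and a short chain argument; the paper instead constructs, in Lemma~\ref{lin-unab}, an explicit $q\times q$ matrix from $q$ specific permutations and verifies its nonsingularity by row reduction. Your version is leaner and avoids the bookkeeping of that matrix, though both rest on the same test elements. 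For part (4), you use Cauchy interlacing of the principal $j\times j$ block rather than the paper's Lemma~\ref{pos-coeff} (that the coefficients of $\Delta_r(u^*\mathrm{diag}(a)u)$ are a convex combination of the elementary products), arriving at an equivalent pointwise bound on $Y_j$; one should just be aware that for $\mathbb F=\mathbb H$ the eigenvalue/Dieudonné-determinant bookkeeping needs the usual doubling-to-$\mathbb C$ convention, which Lemma~\ref{pos-coeff} handles uniformly without comment. For part (5), your proof is actually more explicit than the paper's: writing $Y_1=t_1+\tfrac12\log Z$ with $Z=\sum_i e^{2(t_i-t_1)}|u_{i1}|^2\in[|u_{11}|^2,1]$ (and the dual expression for $Y_q$ via the cofactor identity) reduces everything to finiteness of logarithmic moments of $|u_{11}|^2\sim\mathrm{Beta}(d/2,d(q-1)/2)$ under Haar measure, which is immediate. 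The paper reaches the same conclusion by recycling the bound $0\le 2t_1-\log\Delta_1\le\log M(k)$ and the $L^2$-integrability of $\log M(k)$ established via Proposition~\ref{int-finite}; your route bypasses Proposition~\ref{int-finite} entirely for this part and is the cleaner of the two for the diagonal second moments, though it does not extend as directly to the off-diagonal entries (which part (5) doesn't require).
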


By Proposition \ref{2.moment-function-a}(4) and (5),
  all
second moment functions $m_{j,l}$ are growing at most quadratically, and
  $m_{1,1}$ and  $m_{q,q}$ are in fact  growing quadratically.

Consider a probability measure $\nu\in M^1(C_q^A)$. We say that  $\nu$
  admits first moments if all usual first
moments $\int_{C_q^A} t_j\> d\nu(t)$ ($j=1,\ldots,q$) exist. By Proposition 
\ref{1.moment-function-a}(2) this is equivalent to require that the modified
expectation 
 $$m_{\bf 1}(\nu):=  \int_{C_q^A} m_{\bf 1}(t)\> d\nu(t) \in C_q^A\subset \mathbb R^q$$
exists. $m_{\bf 1}(\nu)$ is called dispersion of $\nu$.
In a similar way we say that  $\nu$
  admits second moments if  all usual second
moments $\int_{C_q^A} t_j^2\> d\nu(t)$ ($j=1,\ldots,q$) exist. By
Proposition 
\ref{2.moment-function-a}(4) and (5)  this means that all second
moment functions $m_{j,l}\ge0$ 
 are $\nu$-integrable. In particular, in  this
case,  also all moments of first order exist, and 
we can  form the modified symmetric $q\times q$-covariance matrix 
$$\Sigma^2(\nu):=  \int_G m_{\bf 2}\> d\nu \> 
-\> m_{\bf 1}(\nu)^t\cdot  m_{\bf 1}(\nu).$$
The rank of this positive semidefinite matrix can be determined
depending on $\nu$. This follows in a natural way from the structure of the
double coset hypergroup $G//K\simeq A_q^A$ which is the direct product of the
diagonal subgroup $D_q:=\{c\cdot (1,\ldots,1):\> c\in\mathbb R\}\subset C_q^A$ and
   the subhypergroup 
$C_q^{A,0}:=\{ t\in C_q^A: t_1+\cdots+ t_q=0\}$ which is a reduced Weyl
   chamber of type $A$. This direct product structure  explains the form 
(\ref{def-spherical-a}) of the spherical functions. It also explains 
Proposition \ref{2.moment-function-a} and the
   following result on $\Sigma^2(\nu)$: 

\begin{proposition}\label{2.moment-function-a-pos-def}
Assume that  $\nu\in M^1(C_q^A)$
  admits second moments.
\begin{enumerate}\itemsep=-1pt
\item[\rm{(1)}] If the projection of $\nu$ under the orthogonal projection
  from $ C_q^A\subset \mathbb R^q$ onto $ D_q$ is not a point measure, and if
the support of
  $\nu$ is not contained in  $D_q$, then  $\Sigma^2(\nu)$ is positive
  definite.
\item[\rm{(2)}]  If $supp\> \nu\subset D_q $, then the rank of  $\Sigma^2(\nu)$
is at most 1.
\item[\rm{(3)}] If the projection of $\nu$ under the orthogonal projection from
  $C_q^A\subset \mathbb R^q$ to $D_q$ is  a point measure, and if
  $supp\> \nu \not\subset D_q$, then  $\Sigma^2(\nu)$  has rank $q-1$.
\end{enumerate}
\end{proposition}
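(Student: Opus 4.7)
The plan is to reduce the proposition to the direct product decomposition $C_q^A\simeq D_q\times C_q^{A,0}$ already highlighted by the author, and then to combine a conditional-variance identity with the pointwise rank information from Proposition \ref{2.moment-function-a}. First I would exploit the spherical function factorization (\ref{def-spherical-a}). Substituting $\lambda\mapsto -i\rho-i\lambda$ and using $\rho\in\mathbb R^q_0$ (so $\langle t-\pi(t),\rho\rangle=0$ and $\pi(\rho)=\rho$) yields
$$\phi^A_{-i\rho-i\lambda}(t)=e^{\langle t-\pi(t),\lambda\rangle}\cdot F_A(\rho+\pi(\lambda),d/2;\pi(t)).$$
With $\mathbf 1:=(1,\ldots,1)$ and $\bar t:=(t_1+\ldots+t_q)/q$, so $t-\pi(t)=\bar t\,\mathbf 1$, differentiation once and twice at $\lambda=0$ (using $F_A(\rho,d/2;\pi(t))=1$, which is $\phi^A_{-i\rho}\equiv 1$) gives
$$m_{\mathbf 1}(t)=\bar t\,\mathbf 1+\tilde m(\pi(t)),\qquad \Sigma^2(t)=B(\pi(t))-\tilde m(\pi(t))^{t}\,\tilde m(\pi(t)),$$
where $\tilde m(\pi(t))$ and $B(\pi(t))$ are the $\lambda$-gradient and Hessian at $0$ of $\lambda\mapsto F_A(\rho+\pi(\lambda),d/2;\pi(t))$. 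Since these depend on $\lambda$ only through $\pi(\lambda)$ and $\pi(\mathbf 1)=0$, one has $\tilde m(\pi(t))\in\mathbb R^q_0$ and $B(\pi(t))\mathbf 1=0$, in particular $\Sigma^2(t)\mathbf 1=0$, which is consistent with Proposition \ref{2.moment-function-a}(2),(3).

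Next I would add and subtract $\int m_{\mathbf 1}^{t}m_{\mathbf 1}\,d\nu$ to obtain the conditional-variance decomposition
$$\Sigma^2(\nu)=\int_{C_q^A}\Sigma^2(t)\,d\nu(t)\;+\;\mathrm{Cov}_\nu(m_{\mathbf 1}),$$
a sum of two positive semidefinite matrices. Hence $\mathrm{null}\,\Sigma^2(\nu)$ is the intersection of the two null spaces. By Proposition \ref{2.moment-function-a}(2),(3) and positivity of $\nu$ on $\{\pi(t)\neq 0\}$, the null space of $\int\Sigma^2(t)\,d\nu(t)$ equals $D_q=\mathbb R\mathbf 1$ whenever $\mathrm{supp}\,\nu\not\subset D_q$, and equals all of $\mathbb R^q$ otherwise. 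On the other side, the first-step formula gives $\langle m_{\mathbf 1}(t),\mathbf 1\rangle=q\bar t$, so $\mathbf 1\in\mathrm{null}(\mathrm{Cov}_\nu(m_{\mathbf 1}))$ if and only if the push-forward of $\nu$ under projection onto $D_q$ is a Dirac measure.

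With these two observations all three cases are immediate. In (1), the first null space equals $\mathbb R\mathbf 1$ and $\mathbf 1$ is \emph{not} in the second, so the intersection is $\{0\}$ and $\Sigma^2(\nu)$ is positive definite. In (2), the first summand vanishes; moreover $\tilde m(0)=0$ since $F_A(\lambda,d/2;0)\equiv 1$, so $m_{\mathbf 1}(t)=\bar t\,\mathbf 1$ $\nu$-a.s.\ and $\mathrm{Cov}_\nu(m_{\mathbf 1})=\mathrm{Var}_\nu(\bar t)\,\mathbf 1\mathbf 1^{t}$ has rank at most one. In (3), the first null space equals $\mathbb R\mathbf 1$ and $\mathbf 1$ already lies in the second, so the intersection is exactly $\mathbb R\mathbf 1$ and $\Sigma^2(\nu)$ has rank $q-1$. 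I expect the principal technical work to be concentrated in the first step: carefully descending the factorization of $\phi^A_\lambda$ to $m_{\mathbf 1}$, $m_{\mathbf 2}$ and $\Sigma^2$, since it is precisely this factorization that forces $D_q$ to be the universal common null direction of $\Sigma^2(t)$ and thereby decouples the two summands of $\Sigma^2(\nu)$.
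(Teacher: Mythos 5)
Your proof is correct, and it organizes the argument differently from the paper. The paper's proof applies the Cauchy--Schwarz inequality once to the double integral $\int_{C_q^A}\int_K(\cdots)\,dk\,d\nu(t)$ coming from the Harish-Chandra representation, and then identifies the equality case by picking a $t\in\operatorname{supp}\nu\setminus D_q$, invoking Lemma \ref{lin-unab} to force $a_1=\cdots=a_q$, and then varying $t$ in $\operatorname{supp}\nu$ to force $a_1=0$; parts (2) and (3) are dispatched with a brief ``same arguments'' remark. You instead split $\Sigma^2(\nu)=\int\Sigma^2(t)\,d\nu(t)+\mathrm{Cov}_\nu(m_{\mathbf 1})$, a sum of two positive semidefinite matrices whose null spaces you compute separately: the first using Proposition \ref{2.moment-function-a}(2),(3) together with the identity $\Sigma^2(t)\mathbf 1=0$ (which you derive from the product form (\ref{def-spherical-a}) of $\phi^A_\lambda$, though it could equally be read off from $\ln\Delta_q(k^*e^{2\underline t}k)$ being constant in $k$), and the second using $\langle m_{\mathbf 1}(t),\mathbf 1\rangle=t_1+\cdots+t_q$. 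This is essentially the same information the paper's proof extracts, but your decomposition makes explicit the direct-product structure $C_q^A\simeq D_q\times C_q^{A,0}$ that the paper only mentions informally before the statement; it cleanly decouples the two hypotheses on $\nu$ (support outside $D_q$ versus non-degenerate projection onto $D_q$), handles all three parts uniformly without the paper's hand-waving for (2) and (3), and avoids re-invoking the $K$-integral and Lemma \ref{lin-unab}, at the price of a short computation verifying that the factorization of $\phi^A_\lambda$ descends to $m_{\mathbf 1}$ and $\Sigma^2$.
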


As main results of this paper in the A-case, we
 have the following strong law of large numbers and CLT for a biinvariant 
random walk
 $(S_k)_{k\ge0}$ on $G$ associated with the probability measure 
$\nu\in M^1(C_q^A)$. Proofs are given in Section 7 below.

\begin{theorem}\label{lln-a}
\begin{enumerate}\itemsep=-1pt
\item[\rm{(1)}]   If    $\nu$ admits  first moments, then for $k\to\infty$,
$$\frac{\ln\sigma_{sing}(S_k)}{k} \longrightarrow m_{\bf 1}(\nu)
  \quad\quad\text{almost surely.}$$
\item[\rm{(2)}]    If    $\nu$ admits second moments, then for all
  $\epsilon>1/2$ and
 $k\to\infty$,
$$\frac{1}{ k^\epsilon}\bigl(\ln\sigma_{sing}(S_k)-k\cdot m_{\bf 1}(\nu))
\longrightarrow 0
  \quad\quad\text{almost surely.}$$
\end{enumerate}
\end{theorem}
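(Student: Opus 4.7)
The plan is to split the argument into the two parts: (1) is proved under the weaker first-moment hypothesis by combining Kingman's subadditive ergodic theorem with an $L^1$-identification of the a.s.\ limit, while (2) is handled by a direct martingale argument using the moment function $m_{\bf 1}$. Write $T_k := \ln\sigma_{sing}(S_k)\in C_q^A$ and $\mathcal F_k := \sigma(X_1,\ldots,X_k)$ throughout.

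Two preliminary facts drive both parts. First, by $K$-biinvariance of $\nu_G$, the conditional distribution of $T_k$ given $\mathcal F_{k-1}$ is the double-coset convolution $\delta_{T_{k-1}} *_A \nu$. Second, differentiating the character identity $\phi_{-i\rho-i\lambda}(\mu_1 *_A \mu_2)=\phi_{-i\rho-i\lambda}(\mu_1)\phi_{-i\rho-i\lambda}(\mu_2)$ once in $\lambda$ at $\lambda=0$ and using $\phi_{-i\rho}\equiv 1$ yields the additivity $m_{\bf 1}(\mu_1*_A\mu_2)=m_{\bf 1}(\mu_1)+m_{\bf 1}(\mu_2)$ for measures with first moments. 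In particular $m_{\bf 1}(\nu^{*k})=k\,m_{\bf 1}(\nu)$ and
$$E[m_{\bf 1}(T_k)\mid\mathcal F_{k-1}]=m_{\bf 1}(T_{k-1})+m_{\bf 1}(\nu),$$
so that $M_k:=m_{\bf 1}(T_k)-k\,m_{\bf 1}(\nu)$ is a componentwise martingale with $M_0=0$. The standard singular-value inequalities $\sigma_{min}(X_k)\sigma_i(S_{k-1})\le\sigma_i(S_k)\le\|X_k\|\,\sigma_i(S_{k-1})$ give $\|T_k-T_{k-1}\|_\infty\le Y_k:=\|\ln\sigma_{sing}(X_k)\|_\infty$, an i.i.d.\ sequence that is $L^1$ under (1) and $L^2$ under (2); combined with the bound $\|m_{\bf 1}(t)-t\|\le C$ from Proposition~\ref{1.moment-function-a}(2), this also controls the increments of $M_k$.

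For part~(1), I apply Kingman's subadditive ergodic theorem to the stationary subadditive processes $f^{(j)}_{n,m}:=\ln\|\Lambda^j(X_{n+1}\cdots X_m)\|=\ln(\sigma_1\cdots\sigma_j)(X_{n+1}\cdots X_m)$ for $j=1,\ldots,q$. The first-moment hypothesis yields $E(f^{(j)}_{0,1})^+<\infty$, so $\frac1k\sum_{i=1}^j T_k^{(i)}\to \Lambda_j$ a.s.; differencing produces componentwise convergence $T_k/k\to\vec\lambda$ a.s.\ with $\vec\lambda\in C_q^A$ by the ordering of singular values. Next, $\|T_k\|_\infty/k\le\frac1k\sum_{j=1}^k Y_j$ shows that $\{T_k/k\}$ is uniformly integrable (Ces\`aro means of i.i.d.\ $L^1$ variables), and by Proposition~\ref{1.moment-function-a}(2) so is $\{m_{\bf 1}(T_k)/k\}$. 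Hence $m_{\bf 1}(T_k)/k\to\vec\lambda$ also in $L^1$, and since $E[m_{\bf 1}(T_k)/k]=m_{\bf 1}(\nu^{*k})/k=m_{\bf 1}(\nu)$ is constant, $\vec\lambda=m_{\bf 1}(\nu)$.

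For part~(2), under second moments the bound $|M_k-M_{k-1}|\le C(1+Y_k)$ gives $\sup_k E(M_k-M_{k-1})^2<\infty$ via independence of $X_k$ and $\mathcal F_{k-1}$. For any $\epsilon>1/2$ the series $\sum_k E(M_k-M_{k-1})^2/k^{2\epsilon}$ converges, so $\sum_k (M_k-M_{k-1})/k^\epsilon$ is an $L^2$-bounded martingale, converges a.s., and Kronecker's lemma applied coordinatewise then yields $M_k/k^\epsilon\to 0$ a.s.; the bound $\|T_k-m_{\bf 1}(T_k)\|\le C$ from Proposition~\ref{1.moment-function-a}(2) transports this to the stated convergence of $T_k$. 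The genuine obstacle lies in the identification step in part~(1): under the bare first-moment hypothesis the standard $L^p$-martingale SLLN's are unavailable, and one must route the argument through Kingman applied to the exterior-power norms, transporting the resulting a.s.\ Lyapunov spectrum back to $m_{\bf 1}(\nu)$ via uniform integrability\,--\,an argument that relies essentially on Proposition~\ref{1.moment-function-a}(2) and on the submultiplicative matrix inequality for singular values.
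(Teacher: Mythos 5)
Your proof is correct, but it takes a genuinely different route than the paper, most sharply in part~(1).

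The paper handles both parts by invoking the general hypergroup strong laws of Bloom--Heyer (Theorems 7.3.21 and 7.3.24 of \cite{BH}) applied to the double coset hypergroup $(C_q^A,*)$: it checks the moment-function hypothesis (MF2), namely $m_{e_l}^2\le m_{2e_l}$ via Jensen's inequality in the integral representation (\ref{moment-function-a}), applies the black-box theorems to each coordinate, and then transports the result from $m_{\bf 1}(\tilde S_k)$ back to $\tilde S_k$ using Proposition \ref{1.moment-function-a}(2). Your part~(2) re-derives, in a self-contained way, essentially the martingale-plus-Kronecker mechanism that underlies BH~7.3.21: the key point, which you state correctly, is that first-order differentiation of the character identity at $\lambda=0$ makes $m_{\bf 1}$ additive under convolution, so $M_k=m_{\bf 1}(T_k)-km_{\bf 1}(\nu)$ is a martingale with increments dominated by $C(1+Y_k)$. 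Your part~(1), by contrast, is a genuinely different argument: rather than using a hypergroup SLLN under first-moment hypotheses (BH~7.3.24), you apply Kingman's subadditive ergodic theorem to the exterior-power norms $f^{(j)}_{n,m}=\ln\|\Lambda^j(X_{n+1}\cdots X_m)\|$, obtaining a.s.\ convergence $T_k/k\to\vec\lambda$, and then identify $\vec\lambda=m_{\bf 1}(\nu)$ by uniform integrability of the Ces\`aro means $\frac1k\sum_{j\le k}Y_j$ (e.g.\ as a reverse martingale) together with the constancy $E[m_{\bf 1}(T_k)]=km_{\bf 1}(\nu)$. This Furstenberg--Kesten/Oseledets-style route is more transparent probabilistically and avoids the BH machinery entirely. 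What you lose relative to the paper's approach is that the subadditivity and the increment bound $\|T_k-T_{k-1}\|_\infty\le Y_k$ rely on the concrete matrix realization via singular-value inequalities; the hypergroup-theoretic argument of the paper carries over verbatim to the $BC$-case with non-integer $p$, where no underlying group exists, whereas the exterior-power trick does not. For the $A$-case statement at hand, where the group $GL(q,\mathbb F)$ is always present, your proof is complete and correct; the small missing justifications (that $\nu^{(k)}$ inherits first moments, hence the differentiation in Lemma~\ref{differentiable-a} applies, and that $\gamma_j:=\inf_k E[f^{(j)}_{0,k}]/k>-\infty$) follow at once from the same bound $\|T_k\|_\infty\le\sum_{j\le k}Y_j$ you already use.
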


\begin{theorem}\label{clt-a}
If   $\nu\in M^1(G)$  admits finite second moments, 
then for $k\to\infty$,
$$\frac{1}{ \sqrt k}(\ln\sigma_{sing}(S_k)-k\cdot m_{\bf 1}(\nu)) \longrightarrow
N(0,\Sigma^2(\nu))  \quad \quad \quad\text{in distribution}.$$
\end{theorem}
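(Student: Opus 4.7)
The plan is to prove the CLT via Lévy's continuity theorem. By Proposition \ref{1.moment-function-a}(2), the modified statistic
$\tilde Y_k := \frac{1}{\sqrt k}\bigl(m_{\bf 1}(\ln\sigma_{sing}(S_k))-k\,m_{\bf 1}(\nu)\bigr)$
differs from $\frac{1}{\sqrt k}(\ln\sigma_{sing}(S_k)-k\,m_{\bf 1}(\nu))$ by a vector of norm at most $C/\sqrt k$ almost surely, so by Slutsky's theorem it is enough to show $\tilde Y_k \Rightarrow N(0,\Sigma^2(\nu))$.

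The characteristic function of $\tilde Y_k$ at $\lambda\in\mathbb R^q$ equals
$\chi_k(\lambda)=e^{-i\sqrt k\,\langle\lambda,m_{\bf 1}(\nu)\rangle}\int e^{i\langle\lambda/\sqrt k,\,m_{\bf 1}(t)\rangle}\,d\mu_k(t)$,
where $\mu_k:=\nu^{*k}$ is the $k$-fold hypergroup convolution of $\nu$, i.e.~the distribution of $\ln\sigma_{sing}(S_k)$ on $C_q^A$. First I would apply Proposition \ref{1.moment-function-a}(3) with $\lambda$ replaced by $\lambda/\sqrt k$ to replace the exponential in the integrand by the spherical function $\phi^A_{-i\rho-\lambda/\sqrt k}(t)$, incurring an error $O(\|\lambda\|^2/k)$ which is uniform in $t$ and hence unaffected by integration against $\mu_k$. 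Next I would exploit the multiplicativity of hypergroup characters, $\int\phi^A_\zeta\,d\mu_k=(\int\phi^A_\zeta\,d\nu)^k$, to reduce the problem to analyzing the single-factor integral $\int\phi^A_{-i\rho-\lambda/\sqrt k}\,d\nu$ raised to the $k$-th power.

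The crucial observation is that the Harish-Chandra integral representation (\ref{int-rep-a}) exhibits $\phi^A_{-i\rho-\lambda}(t)$, up to the sign convention in $\lambda$, as the ordinary characteristic function of the probability measure $\sigma^t$ on $\mathbb R^q$ obtained as the push-forward of Haar measure on $K$ under $u\mapsto \frac12\bigl(\ln(\Delta_j(u^{-1}e^{2\underline t}u)/\Delta_{j-1}(u^{-1}e^{2\underline t}u))\bigr)_{j=1}^q$. Formula (\ref{moment-function-a}) then identifies the first moment of $\sigma^t$ with $m_{\bf 1}(t)$ and its matrix of second moments with $m_{\bf 2}(t)$, so the mixture $\sigma_\nu:=\int\sigma^t\,d\nu(t)$ is a probability measure on $\mathbb R^q$ with mean $m_{\bf 1}(\nu)$ and covariance matrix $\Sigma^2(\nu)$; the quadratic growth bound of Proposition \ref{2.moment-function-a}(4) together with the finite second moment hypothesis on $\nu$ guarantees that $\sigma_\nu$ has finite second moments. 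Consequently $\int\phi^A_{-i\rho-\lambda}\,d\nu$ is the classical characteristic function of $\sigma_\nu$, and $\bigl(\int\phi^A_{-i\rho-\lambda/\sqrt k}\,d\nu\bigr)^k$ that of $\sigma_\nu^{*k}$ evaluated at $\lambda/\sqrt k$; the classical multivariate CLT for i.i.d.\ sums of $\sigma_\nu$-distributed vectors then yields $e^{-i\sqrt k\,\langle\lambda,m_{\bf 1}(\nu)\rangle}\bigl(\int\phi^A_{-i\rho-\lambda/\sqrt k}\,d\nu\bigr)^k \to e^{-\frac12\lambda^T\Sigma^2(\nu)\lambda}$. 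Combined with the previous error $O(\|\lambda\|^2/k)$ and Lévy's continuity theorem, this finishes the proof.

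The main technical obstacle is the third step, the identification of $\int\phi^A_{-i\rho-\lambda}\,d\nu$ with the characteristic function of a classical probability measure on $\mathbb R^q$. This is precisely what the explicit Harish-Chandra representation of Section 5 buys us, and it is what allows the CLT to be deduced from the classical one in $\mathbb R^q$ rather than requiring a separate second-order Taylor analysis of $\phi^A_{-i\rho-\lambda}(t)$ in $\lambda$ with a delicate integrable remainder estimate on $C_q^A$.
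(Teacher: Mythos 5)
Your proof is correct, and it takes a genuinely different route from the paper at the key step. Both arguments use Proposition~\ref{1.moment-function-a}(2) to reduce the statement to the convergence of $(m_{\bf 1}(\tilde S_k)-k\,m_{\bf 1}(\nu))/\sqrt k$, and both use Proposition~\ref{1.moment-function-a}(3) to swap $e^{i\langle\lambda/\sqrt k,\,m_{\bf 1}(t)\rangle}$ for $\phi^A_{-i\rho-\lambda/\sqrt k}(t)$ at cost $O(\|\lambda\|^2/k)$ uniformly in $t$, and both exploit multiplicativity to pass to $\bigl(\int\phi^A_{-i\rho-\lambda/\sqrt k}\,d\nu\bigr)^k$. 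Where you diverge is in how that $k$-th power is handled: the paper performs an explicit second-order Taylor expansion of the spherical Fourier transform $\tilde\nu$ at $\lambda=0$, which requires proving (via Lemma~\ref{absch-abl-a} and Lemma~\ref{differentiable-a}) that $\tilde\nu$ is twice continuously differentiable with derivatives given by integrating the moment functions, and then computes $\bigl(1-\lambda\Sigma^2(\nu)\lambda^t/(2k)+o(1/k)\bigr)^k\to e^{-\lambda\Sigma^2(\nu)\lambda^t/2}$ by hand. You instead observe that the Harish-Chandra representation~(\ref{int-rep-a}) exhibits $\phi^A_{-i\rho-\lambda}(t)$, for real $\lambda$, as the ordinary characteristic function of a push-forward $\sigma^t$ of Haar measure on $K$, so that $\tilde\nu(\lambda)=\hat\sigma_\nu(\lambda)$ for the mixture $\sigma_\nu=\int\sigma^t\,d\nu(t)$ on $\mathbb R^q$, whose mean and covariance are exactly $m_{\bf 1}(\nu)$ and $\Sigma^2(\nu)$ by~(\ref{moment-function-a}); the Proposition~\ref{2.moment-function-a}(4) bound gives integrability of the second moments of $\sigma_\nu$, and the classical multivariate Lindeberg--L\'evy CLT then yields the limit. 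This buys you the ability to cite the classical CLT as a black box and skip Lemma~\ref{differentiable-a} entirely, and it makes more visible the conceptual point that the hypergroup CLT in type $A$ is a ``hidden'' classical CLT under the Harish-Chandra map. The only thing worth flagging is the sign bookkeeping you wave at (``up to the sign convention in $\lambda$''): depending on whether one reads~(\ref{int-rep-a}) directly or follows the paper's expansion~(\ref{diff1a}), one gets $\phi^A_{-i\rho-\lambda}(t)=\hat\sigma^t(\pm\lambda)$; either sign produces the same centered Gaussian limit, so the conclusion is unaffected, but in a polished write-up you would want to pin the sign down once so the centering term $e^{\mp i\sqrt k\langle\lambda,m_{\bf 1}(\nu)\rangle}$ visibly matches the classical CLT normalization.
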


\section{Dispersion and limit theorems for root systems of type $BC$  }

In this section we 
consider the non-compact Grassmann manifolds $\mathcal G_{p,q}(\mathbb F):=G/K$
with $p> q$, where depending on $\mathbb F$, 
the group $G$
is one of the indefinite
orthogonal, unitary or symplectic  groups
$ SO_0(q,p),\, SU(q,p)$  or $Sp(q,p)$, and $K$  the maximal
compact subgroup  $
SO(q)\times SO(p), \, S(U(q)\times U(p))$ or $Sp(q)\times Sp(p)$ respectively.
We identify the double coset space $G//K$ with the Weyl chamber
$C_q^B$ according to Eq.~(\ref{a_t-BC}). To determine the associated 
canonical projection from $G$ to $C_q^B$, 
 write $g\in G$ in $p\times q$-block notation as 
$$ g = \begin{pmatrix} A(g) & B(g)\\
C(g) & D(g)     \end{pmatrix}$$
with $A(g)\in M_q(\mathbb F)$, $D(g)\in  M_p(\mathbb F)$, and so on.
 By Eq.~(\ref{a_t-BC}), the canonical
projection from $G$ to $C_q^B$ is  given by
$$g\mapsto \arcosh(\sigma_{sing}(A(g)))$$
where $\sigma_{sing}$ again denotes the ordered singular spectrum, and 
$\arcosh$ is taken in each component. 

Similar to Section 2, we are interested in limit theorems 
for biinvariant random walks $(S_k:= X_1 \cdot X_2\cdots X_k)_{k\ge0}$ on $G$
for   i.i.d.~$G$-valued random variables $(X_k)_{k\ge1}$ with the common 
$K$-biinvariant distribution $\nu_G\in M^1(G)$.
We identify  $G//K$ with $C_q^B$ as above.
 Then, via taking the image measure of $\nu_G$
under the canonical projection from $G$ to $G//K$, the 
$K$-biinvariant distribution $\nu_G\in M^1(G)$ corresponds 
 with some unique probability measure $\nu\in M^1(C_q^B)$.
We shall show that, under natural
moment conditions, the $C_q^B$-valued random variables 
$$\frac{\arcosh(\sigma_{sing}(A(S_k)))}{k}$$
 converge a.s. to some drift vector 
$m_{\bf 1}(\nu)\in  C_q^B$, and that the  $\mathbb R^q$-valued
random variables
\begin{equation}\label{normal-B}
\frac{1}{\sqrt k}( \arcosh(\sigma_{sing}(A(S_k)))-k\cdot m_{\bf 1}(\nu))
\end{equation}
tend in distribution to some normal distribution  $N(0,\Sigma^2(\nu))$ on $\mathbb R^q$.

We  derive these limit theorems in a more general context. 
For this recall  that $C_q^B\simeq G//K$ is a double coset hypergroup
whose multiplicative functions are given by the
 hypergeometric functions
\begin{equation}\label{def-hypergeo-b}
 \phi_\lambda^p(t)=F_{BC}(i\lambda,k_p;t) 
\quad\quad(t\in C_q^B, \> \lambda\in \mathbb C^q)
\end{equation}
with   multiplicity
$k_p=(d(p-q)/2, (d-1)/2, d/2)$. In \cite{R2}, the
product formula for these spherical functions $\phi\in C(G)$, namely
$$\phi(g)\phi(h)=\int_K \phi(gkh)\> dk \quad\quad (g,h\in G),$$
was written down explicitly in terms of these
 hypergeometric functions of type BC for
 all $p\ge 2q$ as a product formula for on $G//K\simeq C_q^B$
such that this formula remains correct for  $\phi_\lambda^p$ with 
all real parameters $p\in]2q-1,\infty]$. This result from \cite{R2}
 is as follows: For all $s,t\in C_q^B$ and
 $\lambda\in \mathbb C^q$,
$$\phi_\lambda^p(t)\phi_\lambda^p(s)=
\int_{C_q^B}\phi_\lambda^p(x)\> d(\delta_s*_p\delta_t)(x)$$
where the probability measures $\delta_s*_p\delta_t\in M^1(C_q^B)$ with
compact support are given by
\begin{equation}\label{convo-formel}
(\delta_s*_p\delta_t)(f)=
\frac{1}{\kappa_p}\int_{B_q}\int_{U(q,\mathbb F)} 
f\Bigr(\arcosh(\sigma_{sing}(\sinh\underline t \,w\,\sinh\underline s \,+\,
\>\cosh \underline t\> v\> \cosh \underline s
))\Bigr)
\> dv\> dm_p(w)
\end{equation}
for functions $f\in C(C_q^B)$.
Here, $dv$ means integration w.r.t. the normalized Haar measure on
 $U(q,\mathbb F)$, $B_q$ is the matrix ball
$$B_q:= \{ w\in M_{q}(\mathbb F):\> w^*w\le I_q\},$$ 
and $dm_p(w)$ is the probability measure
\begin{equation}\label{probab-mp}
dm_p(w):=\frac{1}{\kappa_{p}} \Delta(I-w^*w)^{d(p/2+1/2-q)-1}\> dw
\quad \in M^1( B_q)
\end{equation}
where $dw$ is the Lebesgue measure on the ball $B_q$, and the normalization
constant 
$\kappa_{p}>0$ is chosen such that $dm_p(w)$ is a
probability measure. For $p=2q-1$ there is a corresponding degenerated 
formula where
then $m_p \in M^1( B_q)$ then becomes singular; see
Section 3 of \cite{R1} for  details.
By \cite{R2}, the convolution (\ref{convo-formel}) can be extended
for all
$p\in[2q-1,\infty[$
  in a unique
bilinear, weakly continuous way to a commutative and 
associative convolution $*_p$ on the Banach space of all
bounded Borel measures on $C_q^B$, such that $(C_q^B,*_p)$ becomes a 
commutative hypergroup with $0\in \mathbb R^q$ as identity.

We now use the convolution $*_p$  for $p\in [2q-1,\infty[$ and $d=1,2,4$ and 
 generalize  the Markov processes 
\begin{equation}\label{def-proj-irrfahrt}
\bigl(\tilde S_k:= \arcosh(\sigma_{sing}(A(S_k)))\bigr)_{k\ge0}
\quad\quad\text{on}\quad C_q^B
\end{equation}
  in the group cases for
 integers $p$
 as follows: 
Fix  $\nu\in M^1(C_q^B)$, and consider a time-homogeneous 
random walk $(\tilde S_k)_{k\ge0}$ on $C_q^B$ 
(associated with the parameters $p,d$)
with law $\nu$, i.e., a
time-homogeneous Markov process on  starting at the hypergroup identity
 $0\in C_q^B$ 
with transition
probability
$$P(\tilde S_{k+1}\in A|\> \tilde S_k=x)= (\delta_x *\nu)(A)
\quad\quad(x\in C_q^B, \> A\subset C_q^B \quad\text{a Borel set}).$$
By our construction, each stochastic process on $C_q^B$
 defined via Eq.~(\ref{def-proj-irrfahrt}), is in fact such a time-homogeneous 
random walk for the corresponding $p,d$.
We also point out that induction on $k$ shows easily that the distributions of
$\tilde S_k$ are given as the convolution powers $\nu^{(k)}$ w.r.t. the
convolution $*_p$.
We shall derive all limit theorems in this  setting for $p\in]2q-1,\infty[$.

To identify the data of the limit theorems, we proceed as in
Section 2  and use the Harish-Chandra integral
  representation of $\phi_\lambda^p$
in Theorem 2.4 of
 \cite{RV1}:

\begin{proposition}\label{int-rep-prop-bc}
For all  $p>2q-1$, $t\in C_q^B$, and $\lambda\in \mathbb C^q$,
\begin{equation}\label{phi-int-kurz-bc}
\phi_{\lambda}^p(t)=\int_{B_q} \int_{U(q,\mathbb F)} 
 \Delta_{(i\lambda-\rho^{BC})/2}( g(t,u,w))
\> du\> dm_p(w)
\end{equation}
with the power function $\Delta_\lambda$ from (\ref{power-function}),
the half sum of positive roots
\begin{equation}\label{rho-BC}
 \rho^{BC} =  \rho^{BC}(p) =
 \sum_{i=1}^q \bigl( \frac{d}{2}(p+q+2-2i) -1 \bigr)e_i\,,
\end{equation}
\begin{equation}\label{def-g}
 g(t,u,w):=u^*(\cosh \underline t + \sinh\underline t \cdot w)(\cosh\underline t + \sinh\underline t \cdot w)^*u,
\end{equation}
and with  $m_p(w)\in M^1(B_q)$ from (\ref{probab-mp}).
For $p=2q-1$,  a corresponding degenerated formula holds.
\end{proposition}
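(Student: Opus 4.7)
The strategy has two stages: establish the formula first in the group cases where $p\in\{2q,2q+1,\ldots\}$ via the classical Harish-Chandra integral representation of spherical functions, then extend to all real $p\in(2q-1,\infty)$ by analytic continuation in $p$.

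For integer $p\geq 2q$, I would start from the Harish-Chandra formula $\phi_\lambda^p(a_t) = \int_K e^{(i\lambda - \rho^{BC})(H(a_t k))}\, dk$, where $H$ is the Iwasawa projection onto the split torus and $K = K_q\times K_p$ factors as a product. Writing $a_t$ in the block form of (\ref{a_t-BC}) and $k$ block-wise, the Iwasawa decomposition of $a_t k$ reduces to a Cholesky-type factorization of a $q\times q$ positive Hermitian matrix whose principal minors carry the $H$-projection, via the identity $e^{\langle\mu,H(g)\rangle}=\Delta_{\mu/2}(gg^*)$ valid on the split-rank-$q$ symmetric space. A direct block computation then identifies this positive matrix, after averaging over the $K_q$ factor, with $g(t,u,w)$, where $u\in U(q,\mathbb F)$ absorbs the residual $K_q$ freedom and $w$ is an appropriate $q\times q$ sub-block of the $K_p$ component. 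This matches the integrand $\Delta_{(i\lambda-\rho^{BC})/2}(g(t,u,w))$.

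The second step is the change of variables from Haar measure on $K_p$ to the matrix ball $B_q$: classically, the top-left $q\times q$ sub-block of a Haar-distributed element of the compact orthogonal, unitary, or symplectic group on $\mathbb F^p$ has density on $B_q$ proportional to $\Delta(I-w^*w)^{d(p/2+1/2-q)-1}$, which is exactly the weight in $dm_p(w)$ from (\ref{probab-mp}). Combining with step one yields (\ref{phi-int-kurz-bc}) for all integers $p\geq 2q$.

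Finally, I would extend by analytic continuation in $p$: the left-hand side depends holomorphically on $p$ through the Heckman-Opdam dependence of $F_{BC}(i\lambda,k_p;t)$ on the multiplicity $k_p$, and the right-hand side is holomorphic on $\{p>2q-1\}$ because the weight exponent $d(p/2+1/2-q)-1$ exceeds $-1$ exactly there, so $m_p$ is a probability measure depending holomorphically on $p$. Equality on the infinite set $\{2q,2q+1,\ldots\}$ then forces equality throughout $p>2q-1$ by the identity theorem, and $p=2q-1$ is recovered as a boundary limit where $m_p$ concentrates on $\{w^*w=I_q\}$. The main obstacle is the first step, namely tracking $H(a_t k)$ through the block structure of $a_t$ and matching it with the principal minors of $g(t,u,w)$; this requires careful bookkeeping of the conventions for $\rho^{BC}$, the normalization in (\ref{power-function}), and the interaction of the $KAK$-representatives (\ref{a_t-BC}) with the Iwasawa $NA K$ decomposition.
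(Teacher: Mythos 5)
Your overall strategy — Harish-Chandra integral for integer $p$, block reduction of the Iwasawa projection to a $q\times q$ positive Hermitian matrix, the classical density of the upper-left $q\times q$ corner of a Haar element of $K_p$ on the matrix ball $B_q$, and then extension to real $p$ — is precisely what underlies Theorem~2.4 of \cite{RV1}, which is all the paper does: it cites that theorem, identifies $g(t,u,w)$ with $\tilde g_t(u,w)$ there, and remarks that replacing $U_0(q,\mathbb R)$ by $U(q,\mathbb R)$ is harmless because the integrand depends on $u$ only through $u^*(\cdot)u$. So you have reconstructed the right route rather than taken a different one.

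There is, however, a genuine error in your final step. You assert that equality of the two sides at the integers $p\in\{2q,2q+1,\ldots\}$ ``forces equality throughout $p>2q-1$ by the identity theorem.'' That is false. The identity theorem requires the coincidence set to have an accumulation point inside the (connected, open) domain of holomorphy; the set of integers $\geq 2q$ is discrete and has no accumulation point in $\{\operatorname{Re} p>2q-1\}$. What is actually needed here is a Carlson-type uniqueness theorem: one must check that the difference of the two sides, as a holomorphic function of $p$ in a right half-plane, is of exponential order at most one and of type strictly less than $\pi$ along vertical lines. Those growth estimates are not free — controlling the $p$-dependence of $F_{BC}(i\lambda,k_p;t)$ requires the Opdam--Cherednik estimates (or an explicit integral bound), and on the right-hand side one must bound the normalizing constant $\kappa_p$ and the $p$-dependent weight. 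Establishing exactly these bounds is a substantive part of the argument in \cite{RV1}, and substituting ``identity theorem'' for ``Carlson's theorem plus growth control'' leaves a real hole. Separately, you flag the block bookkeeping in the Iwasawa step as ``the main obstacle''; that is a fair admission in a sketch, but note that the paper sidesteps all of this by outsourcing the entire computation to \cite{RV1}.
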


\begin{proof}
This formula follows immediately from Theorem 2.4 of
 \cite{RV1}. Notice that
that our function $g(t,u,w)$
is equal to the function $\tilde g_t(u,w)$ in Section 2 of \cite{RV1}.
Moreover, in \cite{RV1} we take one integral over the identity component
$U_0(q,\mathbb F)$ of $U(q,\mathbb F)$ instead over  $U(q,\mathbb F)$.
But this makes a difference for these groups for 
$\mathbb F=\mathbb R$ only, where
the integrals are equal in all cases by the form of  $g(t,u,w)$.
\end{proof}

We now proceed as in Section 2. 
  For   $l=(l_1,\ldots,l_q)\in\mathbb N_0^q$ we define
the  moment functions
\begin{align}\label{def-m1-bc}
&m_l(t):=
\frac{\partial^{|l|}}{\partial\lambda^l}\phi_{-i\rho^{BC}-i\lambda}^p(t)
\Bigl|_{\lambda=0}:=
\frac{\partial^{|l|}}{(\partial\lambda_1)^{l_1}\cdots(\partial\lambda_q)^{l_q}}
\phi_{-i\rho^{BC}-i\lambda}^p(t) \Bigl|_{\lambda=0}
\notag\\
=&
\frac{1}{2^{|l|}}\int_{B_q} \int_{U(q,\mathbb F)}
 (\ln\Delta_1( g(t,u,w)))^{l_1}\cdot
 \left(\ln \frac{\Delta_2( g(t,u,w))}{\Delta_1( g(t,u,w))}\right)^{l_2}
\cdots
 \left(\ln \frac{\Delta_q( g(t,u,w))}{\Delta_{q-1}( g(t,u,w))}\right)^{l_q}
\> du\> dm_p(w)
 \end{align}
of order $|l|$ for $t\in C_q^B$. Clearly, the 
last equality follows  from  (\ref{phi-int-kurz-bc})
 by interchanging integration and  derivatives.
Using the $q$ moment functions $m_l$ of first order with $|l|=1$, 
we form the vector-valued moment function
\begin{equation}\label{m1-vector-bc}
m_{\bf 1}(t):=(m_{(1,0,\ldots,0)}(t),\ldots,m_{(0,\ldots,0,1)}(t))
\end{equation}
of first order. We prove the following properties of $m_{\bf 1}$ in Section 6:

\begin{proposition}\label{1.moment-function-bc}
\begin{enumerate}\itemsep=-1pt
\item[\rm{(1)}]   For all $t\in C_q^B$, $m_{\bf 1}(t)\in  C_q^B$.
\item[\rm{(2)}]  There  exists a constant
$C=C(p,q)$ such that for all $t\in C_q^B$, 
$$\|m_{\bf 1}(t)-t\|\le C.$$
\item[\rm{(3)}] There exists a constant $C=C(p,q)$
such that for all  $t\in C_q^B$ and $\lambda\in\mathbb R^q$,
$$\|\phi_{-i\rho-\lambda}^A(t)- e^{i\langle \lambda,  m_{\bf 1}(t)\rangle}\|\le
C\|\lambda\|^2.$$
\end{enumerate}\end{proposition}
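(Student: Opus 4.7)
The plan is to parallel the strategy used for Proposition \ref{1.moment-function-a} in the $A$-case and to reduce all three parts to the integral representation of Proposition \ref{int-rep-prop-bc}. Setting $\mu=-i\rho^{BC}-i\lambda$ yields the clean simplification $(i\mu-\rho^{BC})/2=\lambda/2$, and hence
\begin{equation*}
\phi_{-i\rho^{BC}-i\lambda}^p(t)=\int_{B_q}\int_{U(q,\mathbb F)} e^{\langle\lambda,\,x(t,u,w)\rangle}\,du\,dm_p(w),
\end{equation*}
with $x_j(t,u,w):=\tfrac12\log(\Delta_j(g(t,u,w))/\Delta_{j-1}(g(t,u,w)))$. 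Differentiating under the integral at $\lambda=0$ identifies $m_{\bf 1}(t)=\int\int x(t,u,w)\,du\,dm_p(w)$ and recovers formula \eqref{def-m1-bc}; this is the starting point for all three parts.

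For Part (2), I would analyse $x_j(t,u,w)$ pointwise. Expanding
\[
\cosh\underline t+\sinh\underline t\cdot w=\tfrac12 e^{\underline t}(I+w)+\tfrac12 e^{-\underline t}(I-w)
\]
and noting that on $C_q^B$ the first summand dominates as the $t_i$ grow, the matrix-analytic lemmas collected in Section 4 should yield a factorization $\log\Delta_j(g(t,u,w))=2(t_1+\cdots+t_j)+r_j(t,u,w)$ with $|r_j|$ dominated by a function of $(u,w)$ that is $du\otimes dm_p$-integrable and independent of $t\in C_q^B$. Integrating then gives $|m_j(t)-t_j|\le C(p,q)$, which is Part (2). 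For Part (1), this bound immediately handles $t$ in the interior of the chamber: once $t_j-t_{j+1}$ is large enough the inequality $m_j(t)\ge m_{j+1}(t)$ follows, and similarly for $m_q(t)\ge 0$. For $t$ near a wall of $C_q^B$, I would argue by symmetry: on $\{t_j=t_{j+1}\}$, the change of variable swapping the $j$-th and $(j+1)$-th coordinates of $u$ (which preserves Haar measure) makes $x_j-x_{j+1}$ an odd integrand, forcing $m_j(t)=m_{j+1}(t)$; on $\{t_q=0\}$ the corresponding sign-change symmetry of $g(t,u,w)$ (noting $\sinh 0=0$) forces $m_q(t)=0$. Because $m_{\bf 1}$ depends analytically on $t$, the chamber condition then propagates to all of $C_q^B$.

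For Part (3), the elementary Taylor bound $|e^{is}-e^{is_0}-ie^{is_0}(s-s_0)|\le\tfrac12(s-s_0)^2$, applied pointwise with $s=\langle\lambda,x(t,u,w)\rangle$ and $s_0=\langle\lambda,m_{\bf 1}(t)\rangle$, combined with the vanishing of the first moment $\int\int(x-m_{\bf 1}(t))\,du\,dm_p(w)=0$, reduces the claim to the uniform-in-$t$ boundedness of $\int\int\|x(t,u,w)-m_{\bf 1}(t)\|^2\,du\,dm_p(w)$; this is an $L^2$-refinement of Part (2) and follows from the same matrix-analytic toolkit. The main obstacle throughout is obtaining uniform estimates on $r_j(t,u,w)$ near the boundary $\partial B_q$, where $I-w^*w$ degenerates: the regime $p$ close to the endpoint $2q-1$ is particularly delicate since the density $\Delta(I-w^*w)^{d(p/2+1/2-q)-1}$ of $m_p$ loses integrability there, and controlling this is precisely what the preparatory results of Section 4 are designed to supply.
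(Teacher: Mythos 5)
Your treatment of Parts~(2) and~(3) is in the spirit of the paper's proof: the paper also deduces~(2) by bounding the pointwise difference $\bigl|\ln\Delta_r(g(t,u,w))-\ln\Delta_r(u^*e^{2\underline t}u)\bigr|$ by a $t$-independent function that is $du\otimes dm_p$-integrable (Lemma \ref{hilf-absch-moment-function-1-bc}, which combines Lemma \ref{lemma-eins2} from \cite{RV1} with the elementary inequality $\tfrac14 e^{2\underline t}\le(\cosh\underline t)^2\le e^{2\underline t}$) and then invokes the already-proved $A$-case bound; and it proves~(3) by the same second-order Taylor reduction to uniform $L^2$ control of $x(t,u,w)-m_{\bf 1}(t)$, with the degenerate endpoint near $\partial B_q$ handled by Lemma \ref{lemma-eins4}. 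Your heuristic via $\cosh\underline t+\sinh\underline t\,w=\tfrac12 e^{\underline t}(I+w)+\tfrac12 e^{-\underline t}(I-w)$ points in the right direction, though the cleaner route the paper takes is through the multiplicative estimate on $\Delta_r(g(t,u,w))/\Delta_r(g(t,u,0))$ rather than an additive one, since the former is uniform over $C_q^B$ without any case-split between large and small $t$.

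Your argument for Part~(1) is, however, genuinely different from the paper's and has two gaps. First, the proposed wall symmetry fails: if $\sigma$ swaps coordinates $j$ and $j+1$, the substitution $u\mapsto u\sigma$ sends $g\mapsto\sigma^*g\sigma$ and leaves $\Delta_{j-1}$ and $\Delta_{j+1}$ unchanged, but replaces $\Delta_j(g)$ by $\det\bigl(g[\{1,\dots,j-1,j+1\}]\bigr)$, and the relation $\Delta_j(g)\cdot\det\bigl(g[\{1,\dots,j-1,j+1\}]\bigr)=\Delta_{j-1}(g)\,\Delta_{j+1}(g)$ that would make $x_j-x_{j+1}$ odd is false for generic $g=u^*au$ (already for $q=2$, $j=1$ it would force $g_{12}=0$); the analogous sign-change trick on $\{t_q=0\}$ similarly does not make $x_q$ odd, and indeed $m_q(t)\neq 0$ on that wall in general. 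Second, even granting equalities on the walls, analyticity of $m_{\bf 1}$ in $t$ does not propagate the inequalities $m_j\ge m_{j+1}\ge 0$ from the walls and the far interior to the whole chamber — an analytic function vanishing on a hyperplane can change sign arbitrarily often nearby. The paper sidesteps all of this: Part~(1) is obtained for free from the strong law of large numbers, Theorem \ref{lln-bc}(1), applied to $\nu=\delta_t$: the random walk increments $\tilde S_k$ lie in $C_q^B$, hence so does $\tilde S_k/k$, and the limit $m_{\bf 1}(t)$ lies in the closed cone $C_q^B$. The paper explicitly remarks that the LLN proof does not rely on Part~(1), so there is no circularity. If you want a direct analytic argument you would need a genuinely new idea; the symmetry route as written does not close.
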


As in Section 2 we also form the matrix consisting of all second
order  moment functions  with
\begin{align}\label{m2-matrix-bc}
m_{\bf 2}(t):=&\left(\begin{array}{ccc} m_{1,1}(t)&\cdots& m_{1,q}(t)\\
\vdots &&\vdots \\ m_{q,1}(t)&\cdots& m_{q,q}(t) \end{array}\right)
\\
:=&
\left(\begin{array}{cccc} m_{(2,0,\ldots,0)}(t)&m_{(1,1,0,\ldots,0)}(t)&\cdots&m_{(1,0,\ldots,0,1)}(t)
\\ m_{(1,1,0,\ldots,0)}(t)&m_{(0,2,0,\ldots,0)}(t)&\cdots&m_{(0,1,0,\ldots,0,1)}(t)
\\ \vdots &\vdots&&\vdots
\\ m_{(1,0,\ldots,0,1)}(t)&m_{(0,1,0,\ldots,0,1)}(t)&\cdots&m_{(0,\ldots,0,2)}(t)
\end{array}\right)
\quad\quad\text{for}\quad t\in C_q^B.
\notag\end{align}
 By Section 6, the symmetric $q\times q$-matrices
$\Sigma^2(t):=m_{\bf 2}(t)-m_{\bf 1}(t)^t\cdot m_{\bf 1}(t)$ 
 have the following properties:

\begin{proposition}\label{2.moment-function-bc}
\begin{enumerate}\itemsep=-1pt
\item[\rm{(1)}]   For each $t\in C_q^B$, the matrix  $\Sigma^2(t)$ is positive semidefinite.
\item[\rm{(2)}]  $\Sigma^2(0)=0$. 
\item[\rm{(3)}] For $t\in C_q^B$ with $t\ne 0$, the matrix 
$\Sigma^2(t)$ has full rank $q$.
\item[\rm{(4)}] There  exists a constant
$C=C(q)$ such that for all $j,l=1,\ldots,q$ and $t\in C_q^B$,
$|m_{j,l}(t)|\le C\cdot t_1^2$.
\item[\rm{(5)}] There  exists a constant
$C=C(q)$ such that for all $t\in C_q^B$, 
$$|m_{1,1}(t) - t_1^2|\le C(|t_1|+1).$$
\end{enumerate}\end{proposition}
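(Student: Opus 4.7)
The key observation is that $\Sigma^2(t)$ is the covariance matrix of a concrete $\mathbb{R}^q$-valued random vector. Setting
\[ Y_j(t,u,w) := \tfrac{1}{2}\ln\frac{\Delta_j(g(t,u,w))}{\Delta_{j-1}(g(t,u,w))} \qquad (\Delta_0 \equiv 1), \]
the integral representation (\ref{def-m1-bc}) gives $m_l(t) = E\bigl[Y_1^{l_1}\cdots Y_q^{l_q}\bigr]$, where the expectation is taken with respect to $du\otimes dm_p(w)$ on $U(q,\mathbb{F})\times B_q$. Hence $\Sigma^2(t) = \mathrm{Cov}(Y(t,\cdot,\cdot))$, which is automatically positive semidefinite, giving (1). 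At $t=0$ we have $g(0,u,w) = I_q$ and every $Y_j$ vanishes, giving (2).

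For (3) I would fix $t\in C_q^B\setminus\{0\}$, write $t_1\ge\cdots\ge t_k>0 = t_{k+1}=\cdots=t_q$ for some $k\ge 1$, and suppose $a\in\mathbb{R}^q$ satisfies $\sum_j a_j Y_j = \tfrac{1}{2}\ln\Delta_a(g(t,u,w))$ constant on $U(q,\mathbb{F})\times B_q$. Two independent perturbations then deliver enough linear constraints on $a$. First, with $u=I$ and $w=\epsilon v$ small, a first-order Taylor expansion of $(\cosh\underline t+\sinh\underline t w)(\cosh\underline t+\sinh\underline t w)^*$ in $\epsilon$ gives $Y_j = \ln\cosh t_j + \epsilon\tanh(t_j)\,\mathrm{Re}(v_{jj}) + O(\epsilon^2)$, so the constraint $a_j\tanh t_j = 0$ kills $a_j$ for every $j\le k$. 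Second, with $w=0$ and $u=\exp(i\epsilon H)$ for Hermitian $H$, the first-order variation of the minors $\Delta_j(u^*\cosh^2\underline t\,u)$ vanishes because $\cosh^2\underline t$ is diagonal; carrying the expansion to second order yields, for each $j>k$,
\[ Y_j = \frac{\epsilon^2}{2}\sum_{b=1}^k \frac{\cosh^2 t_b - 1}{\cosh^2 t_b}\,|H_{bj}|^2 + O(\epsilon^3), \]
with strictly positive coefficients and with the variables $|H_{bj}|^2$ attached to distinct $j>k$ pairwise disjoint. Constancy of $\sum_{j>k} a_j Y_j$ therefore forces $a_j = 0$ for $j>k$ as well, so $a=0$ and $\Sigma^2(t)$ has full rank.

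For (4) I would bound the principal minors of $g(t,u,w)$ via Cauchy interlacing. Factoring $M := \cosh\underline t + \sinh\underline t\,w = \cosh\underline t\,(I + \tanh\underline t\,w)$ and using $\|w\| \le 1$, $\tanh t_1 < 1$, one gets $\|g\| \le 4\cosh^2 t_1$ and $\lambda_{\min}(g) \ge c\,e^{-Ct_1}$; interlacing traps every ratio $\Delta_j/\Delta_{j-1}$ in this spectral range, so $|Y_j|\le C(t_1+1)$ pointwise. Hence $|m_{j,l}(t)| \le C(t_1+1)^2$, and combining with the Taylor bound $|Y_j| = O(t_1)$ for small $t$ (from $g(t,u,w) = I + O(t_1)$) yields the claimed bound $C t_1^2$. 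For (5) I would decompose $m_{1,1}(t) = ((m_{\bf 1})_1)^2 + \mathrm{Var}(Y_1)$; Proposition \ref{1.moment-function-bc}(2) gives $|(m_{\bf 1})_1 - t_1| \le C$, hence $((m_{\bf 1})_1)^2 = t_1^2 + O(|t_1|+1)$, reducing the task to a uniform bound $\mathrm{Var}(Y_1) = O(|t_1|+1)$. Writing $Y_1 = \tfrac{1}{2}\ln\|M^* u e_1\|^2$ and splitting the $u$-integral according to the value of $|v_1|^2$ with $v=ue_1$, the spectral bounds on $M$ from (4) control $Y_1 - \ln\cosh t_1$ in $L^2$ uniformly in $t$, giving the required variance bound.

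The hard part is (3): each perturbation computation is elementary in itself, but one must verify that the two families of perturbations jointly cover every coordinate direction of $a$ and that no spurious degeneracy arises from indices $j$ with $t_j=0$. This is precisely where the $BC$-analysis diverges from the $A$-case, whose direct-product decomposition $C_q^A = D_q\times C_q^{A,0}$ forces the rank to drop to $q-1$; no such decomposition exists for $C_q^B$, and this is ultimately why every nonzero $t$ yields full rank $q$.
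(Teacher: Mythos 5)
Your proof is correct in all five parts, and for part (3) it follows a genuinely different route from the paper's. The paper's proof of (3) is global and structural: it specializes $w$ to $I_q$ (or to $\mathrm{diag}(1,0,\ldots,0)$ when $t=c(1,\ldots,1)$), which turns $g(t,u,w)$ into $u^*au$ for a diagonal matrix $a$ with not all entries equal, and then invokes the linear-independence Lemma~\ref{lin-unab} (already used in the $A$-case) to force $a_1=\cdots=a_q$; finally it observes that $\ln\Delta_q(g(t,u,w))=\ln|\Delta(\cosh\underline t+\sinh\underline t\,w)|^2$ is non-constant in $w$, which kills $a_q$. Your proof instead linearizes around $(u,w)=(I,0)$: the first-order $w$-expansion gives the coefficients $a_j\tanh t_j$ of the independent variables $\mathrm{Re}(v_{jj})$, eliminating $a_j$ for $j\le k$, and the second-order $u$-expansion (the first order vanishes because $\cosh^2\underline t$ is diagonal) produces a positive quadratic form in the off-diagonal entries of the Lie-algebra element, eliminating the remaining $a_j$ — a computation I checked and it matches, with $\tfrac{d_a-1}{d_a}=\tanh^2 t_a>0$ for $a\le k$. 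Yours is more elementary and self-contained (it does not need Lemma~\ref{lin-unab}), while the paper's is shorter and re-uses $A$-case machinery; your concluding remark about the absence of a $D_q$-factor is indeed the structural reason why the rank jumps from $q-1$ to $q$. For parts (4) and (5) the overall plan (crude bound for large $t$, Taylor bound for small $t$, variance decomposition via Proposition~\ref{1.moment-function-bc}(2)) is the same as the paper's; you substitute an operator-norm/Cauchy-interlacing estimate ($\|g\|\le4\cosh^2t_1$, $\lambda_{\min}(g)\ge(1-\tanh t_1)^2$) where the paper reduces to the $A$-case via Lemma~\ref{hilf-absch-moment-function-1-bc}, and the uniform $L^2$ control of $Y_1-\ln\cosh t_1$ you invoke in (5) is exactly the integral $L_1$ shown to be uniformly bounded in the proof of Proposition~\ref{1.moment-function-bc}(3). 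Both implementations are valid.
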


Parts (4), (5) yield
 that all
second moment functions $m_{j,l}$ are growing at most quadratically, and that
at least  $m_{1,1}$ is  growing quadratically.

Now consider a probability measure $\nu\in M^1(C_q^B)$. As in Section 2
we say that  $\nu$
  admits first or second moments if all components of $m_{\bf 1}$ or  $m_{\bf 2}$
are integrable w.r.t.~$\nu$ respectively. In case of existence, we form
the vector  $m_{\bf 1}(\nu)\in C_q^B$ and the matrix   $\Sigma^2(\nu)$ as in
Section 2. We then have the following result which is slightly different from
the corresponding one in the A-case in Section 2: 

\begin{proposition}\label{2.moment-function-bc-pos-def}
If  $\nu\in M^1(C_q^B)$ admits second moments, and if
$\nu\ne\delta_0$, then 
$\Sigma^2(\nu)$ has full rank $q$.
\end{proposition}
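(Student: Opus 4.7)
The plan is to reduce the statement to the pointwise result Proposition \ref{2.moment-function-bc}(3) via the standard conditional-variance / total-variance decomposition, and then use the fact that $\nu\ne\delta_0$ to guarantee that this pointwise contribution is felt.

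First I would establish the decomposition
\begin{equation*}
\Sigma^2(\nu)\;=\;\int_{C_q^B}\Sigma^2(t)\,d\nu(t)\;+\;K(\nu),
\end{equation*}
where $K(\nu)$ is the ordinary covariance matrix of the $\mathbb R^q$-valued function $t\mapsto m_{\bf 1}(t)$ under $\nu$, i.e.
\begin{equation*}
K(\nu):=\int_{C_q^B} m_{\bf 1}(t)^t m_{\bf 1}(t)\,d\nu(t)\;-\;m_{\bf 1}(\nu)^t\,m_{\bf 1}(\nu).
\end{equation*}
This identity is a trivial rearrangement of the definitions of $\Sigma^2(\nu)$ and $\Sigma^2(t)$ once one uses the integrability assumption (which is justified because Proposition \ref{2.moment-function-bc}(4) shows $|m_{j,l}(t)|\le C t_1^2$, so the existence of second moments of $\nu$ indeed makes every entry integrable, and by Proposition \ref{1.moment-function-bc}(2) first moments exist a fortiori).

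Next I would observe that both summands on the right-hand side are positive semidefinite. For $K(\nu)$ this is the standard property of covariance matrices. For the integral, positivity semidefiniteness is inherited pointwise from Proposition \ref{2.moment-function-bc}(1): for any $v\in\mathbb R^q$,
\begin{equation*}
v^t\!\Bigl(\int_{C_q^B}\Sigma^2(t)\,d\nu(t)\Bigr)v\;=\;\int_{C_q^B} v^t\Sigma^2(t)v\,d\nu(t)\;\ge\;0.
\end{equation*}

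The key step is now to upgrade the first integral to \emph{positive definiteness} using the hypothesis $\nu\ne\delta_0$. Since the hypergroup identity is $0\in C_q^B$, the assumption gives $\nu(C_q^B\setminus\{0\})>0$. By Proposition \ref{2.moment-function-bc}(3), for every $t\ne 0$ the matrix $\Sigma^2(t)$ has full rank $q$, hence is strictly positive definite, so $v^t\Sigma^2(t)v>0$ for every nonzero $v\in\mathbb R^q$ and every $t\ne 0$. A nonnegative Borel function that is strictly positive on a set of positive $\nu$-measure has strictly positive integral, so $v^t\!\int\Sigma^2(t)\,d\nu(t)\,v>0$ for every $v\ne 0$. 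Adding the positive semidefinite $K(\nu)$ preserves strict positive definiteness, and the full-rank claim for $\Sigma^2(\nu)$ follows.

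The main thing to be careful about is purely bookkeeping: checking that the decomposition is a legitimate matrix identity (no hidden transpose/row-vs-column confusion in the convention $m_{\bf 1}(\nu)^t\cdot m_{\bf 1}(\nu)$) and verifying that the integrability assumption entering the splitting is exactly the hypothesis "$\nu$ admits second moments," in the integrated sense used here. No analytic subtlety beyond Proposition \ref{2.moment-function-bc} itself is required; all the work has already been done in the pointwise statement.
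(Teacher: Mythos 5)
Your proof is correct, and it takes a genuinely different route than the paper's. The paper treats $a\Sigma^2(\nu)a^t$ directly as the variance, over the triple product space $U(q,\mathbb F)\times B_q\times C_q^B$, of the scalar function $(u,w,t)\mapsto\sum_l a_l f_l(u,w,t)$, and then rules out the equality case by appealing to the non-constancy argument carried out inside the \emph{proof} of Proposition \ref{2.moment-function-bc}(3). You instead factor the claim through the conditional-variance identity
$\Sigma^2(\nu)=\int_{C_q^B}\Sigma^2(t)\,d\nu(t)+K(\nu)$,
where both summands are positive semidefinite, and you need only the \emph{statement} of Proposition \ref{2.moment-function-bc}(3): $\Sigma^2(t)\succ0$ for every $t\ne0$, so the nonnegative integrand $v^t\Sigma^2(t)v$ is strictly positive on $C_q^B\setminus\{0\}$, a set of positive $\nu$-mass because $\nu\ne\delta_0$, which already makes the first summand positive definite; adding $K(\nu)\succeq0$ preserves this. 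The benefit of your version is modularity — Proposition \ref{2.moment-function-bc}(3) is used as a black box, the law-of-total-variance split is a one-line algebraic rearrangement whose integrability is covered by parts (4)--(5) of the same proposition, and one does not have to re-open the pointwise Cauchy--Schwarz equality analysis. The paper's version is a little more compressed but requires the reader to re-enter that earlier proof. Both are rigorous.
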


As main results of this paper in the BC-case, we
 have the following strong law of large numbers and CLT for time-homogeneous
 random walk
 $(\tilde S_k)_{k\ge0}$ on $G$ associated with the probability measure 
$\nu\in M^1(C_q^B)$ which is completely analog to the corresponding results in
 the A-case in Section 2. The proofs, which are completely analog to the  
A-case, are given in Section 8.

\begin{theorem}\label{lln-bc}
\begin{enumerate}\itemsep=-1pt
\item[\rm{(1)}]   If    $\nu$ admits  first moments, then for $k\to\infty$,
$$\frac{\tilde S_k}{k} \longrightarrow m_{\bf 1}(\nu)
  \quad\quad\text{a.s..}$$
\item[\rm{(2)}]    If    $\nu$ admits second moments, then for all
  $\epsilon>1/2$ and
 $k\to\infty$,
$$\frac{1}{ k^\epsilon}\bigl(\tilde S_k-k\cdot m_{\bf 1}(\nu))
\longrightarrow 0
  \quad\quad\text{almost surely.}$$
\end{enumerate}
\end{theorem}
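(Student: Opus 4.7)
The plan is to reduce the theorem to a strong law of large numbers for a martingale built from the moment function $m_{\bf 1}$, exactly in parallel with the proof of Theorem~\ref{lln-a} in the $A$-case. By Proposition~\ref{1.moment-function-bc}(2), $\|m_{\bf 1}(t)-t\|\le C$ uniformly on $C_q^B$, so $\tilde S_k/k-m_{\bf 1}(\tilde S_k)/k \to 0$ deterministically, and both (1) and (2) reduce to the corresponding assertions for $m_{\bf 1}(\tilde S_k)$ in place of $\tilde S_k$.

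The key structural fact is that each coordinate $m_j$ of $m_{\bf 1}$ is additive with respect to $*_p$: differentiating the character identity $\int \phi_\lambda^p\, d(\delta_s*_p\delta_t) = \phi_\lambda^p(s)\phi_\lambda^p(t)$ once in $\lambda$ at $-i\rho^{BC}$ and using $\phi_{-i\rho^{BC}}^p \equiv 1$ one obtains $\int m_j\,d(\delta_s*_p\delta_t)=m_j(s)+m_j(t)$. Combined with the transition kernel $P(\tilde S_{k+1}\in\cdot\mid \tilde S_k)=\delta_{\tilde S_k}*_p\nu$ of the random walk, this shows that whenever $m_j\in L^1(\nu)$, the process $M_k^{(j)}:=m_j(\tilde S_k)-k\,m_j(\nu)$ is a martingale in the natural filtration of $(\tilde S_k)$.

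For part (2), a second differentiation of the same identity yields the companion formula
\[
\int m_{j,l}\,d(\delta_s*_p\delta_t)=m_{j,l}(s)+m_{j,l}(t)+m_j(s)m_l(t)+m_l(s)m_j(t).
\]
Iterating this together with the polynomial growth bounds of Proposition~\ref{2.moment-function-bc}(4)--(5), a direct induction shows that the martingale differences $D_k^{(j)}:=M_k^{(j)}-M_{k-1}^{(j)}$ satisfy $\sup_k E[(D_k^{(j)})^2]<\infty$. A standard square-integrable martingale SLLN of Chow--Marcinkiewicz--Zygmund type then delivers $M_k^{(j)}/k^\epsilon\to 0$ almost surely for every $\epsilon>1/2$, proving~(2). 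Part~(1) follows from~(2) by truncation: split $\nu$ into its restriction $\nu_R$ to $\{\|t\|\le R\}$ and its tail; the walk driven by $\nu_R$ has second moments and is handled by~(2), while the contribution of the tail is controlled by Borel--Cantelli using only the first-moment hypothesis, as for sums of i.i.d.\ real-valued random variables.

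The main obstacle is the control of the conditional second moments $E[(D_k^{(j)})^2\mid\mathcal F_{k-1}]$. Because $\delta_{\tilde S_{k-1}}*_p\nu$ is not a classical translate of $\nu$, the increments $D_k^{(j)}$ are neither i.i.d.\ nor stationary; the quadratic identity above is what makes the argument go through, since it forces the conditional variances $\mathrm{Var}(m_j(\tilde S_k)\mid \mathcal F_{k-1})$ to be bounded by a constant depending only on $\nu$, with the $\tilde S_{k-1}$-dependence absorbed by the growth control of Proposition~\ref{2.moment-function-bc}(4). A secondary technical point is to verify that $m_{\bf 1}(\nu_R)\to m_{\bf 1}(\nu)$ as $R\to\infty$ in the truncation step of~(1); this follows from Proposition~\ref{1.moment-function-bc}(2), which supplies the dominating function needed for dominated convergence.
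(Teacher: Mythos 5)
Your overall strategy is the right one and mirrors what actually underlies the paper's argument: Proposition~\ref{1.moment-function-bc}(2) reduces everything to $m_{\bf 1}(\tilde S_k)$, the first-order additivity $\int m_j\,d(\delta_s*_p\delta_t)=m_j(s)+m_j(t)$ turns $M_k^{(j)}:=m_j(\tilde S_k)-k\,m_j(\nu)$ into a martingale, and a martingale SLLN finishes. The paper, however, does not re-prove the martingale SLLN for hypergroups; it verifies the Jensen condition $m_{(0,\ldots,1,\ldots,0)}^2\le m_{(0,\ldots,2,\ldots,0)}$ (condition (MF2)) from the integral representation (\ref{def-m1-bc}) and then simply invokes the ready-made hypergroup strong laws 7.3.21 and 7.3.24 of \cite{BH} (with $r_k=k^{-2\epsilon}$). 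So far your proposal is a legitimate ``unpacked'' version of the same mechanism.

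Where you run into trouble is the specific claim that $\sup_k E[(D_k^{(j)})^2]<\infty$. The conditional variance computation gives
\begin{equation*}
E\bigl[(D_k^{(j)})^2\mid\mathcal F_{k-1}\bigr]\le
\bigl(m_{j,j}(\tilde S_{k-1})-m_j(\tilde S_{k-1})^2\bigr)
+\bigl(m_{j,j}(\nu)-m_j(\nu)^2\bigr),
\end{equation*}
and Propositions~\ref{1.moment-function-bc}(2) and~\ref{2.moment-function-bc}(4)--(5) only control
$m_{j,j}(t)-m_j(t)^2=O(|t_1|+1)$, not $O(1)$. The ``direct induction'' you allude to does not close this gap; in fact no uniform bound on the individual $E[(D_k^{(j)})^2]$ is asserted anywhere in the paper, and none is needed. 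What \emph{does} follow cleanly from the two additivity identities and $m_j^2\le m_{j,j}$ is the cumulative bound
$E[(M_k^{(j)})^2]=\sum_{i\le k}E[(D_i^{(j)})^2]\le k\bigl(m_{j,j}(\nu)-m_j(\nu)^2\bigr)$,
and Abel summation then gives $\sum_k E[(D_k^{(j)})^2]/k^{2\epsilon}<\infty$ for every $\epsilon>1/2$, which is exactly the Chow/Kronecker condition. Replace your uniform-variance claim with this cumulative bound and part (2) is correct.

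The truncation argument for part~(1) is the second gap. You write that the tail contribution is controlled ``as for sums of i.i.d.\ real-valued random variables,'' but the process $(\tilde S_k)$ is a hypergroup random walk, i.e.\ a Markov chain with transition kernel $\delta_x*_p\nu$; it is not a sum of independent increments, and the walks driven by $\nu$ and by a truncation $\nu_R$ do not differ by an additive error that freezes after finitely many steps. A coupling in which only finitely many increments differ does not yield $\tilde S_k-\tilde S_k^R=O(1)$ as it does on $(\mathbb R,+)$. This is precisely why the paper invokes the first-moment hypergroup SLLN \cite{BH}~7.3.24 rather than truncating; if you want to avoid citing \cite{BH}, you need a genuine argument at the level of the martingale $M_k^{(j)}$ under a first-moment hypothesis only (for instance, a Marcinkiewicz--Zygmund-type martingale SLLN applied to the $L^1$ increments, or a truncation performed on the martingale differences $D_k^{(j)}$ themselves rather than on $\nu$). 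Your final remark on dominated convergence for $m_{\bf 1}(\nu_R)\to m_{\bf 1}(\nu)$ via Proposition~\ref{1.moment-function-bc}(2) is fine, but it does not rescue the truncation scheme.
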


\begin{theorem}\label{clt-bc}
If   $\nu\in M^1(G)$  admits finite second moments, 
then for $k\to\infty$,
$$\frac{1}{ \sqrt k}(\tilde S_k-k\cdot m_{\bf 1}(\nu)) \longrightarrow
N(0,\Sigma^2(\nu))  \quad\quad\quad\text{in distribution}.$$
\end{theorem}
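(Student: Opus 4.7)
The plan is to follow the classical characteristic-function approach via Lévy's continuity theorem, transplanted to the hypergroup setting $(C_q^B,*_p)$. Write $\mu_k$ for the law on $\mathbb R^q$ of
$$Y_k := \frac{1}{\sqrt k}\bigl(\tilde S_k - k\, m_{\bf 1}(\nu)\bigr)$$
and let $\psi_k(\lambda) := \int_{\mathbb R^q} e^{i\langle\lambda,y\rangle}\,d\mu_k(y)$ be its ordinary characteristic function on $\mathbb R^q$. Since $\Sigma^2(\nu)$ is symmetric and positive semidefinite by Proposition \ref{2.moment-function-bc-pos-def}, it suffices by Lévy's continuity theorem to verify pointwise that $\psi_k(\lambda) \to e^{-\frac12 \lambda^t\Sigma^2(\nu)\lambda}$ for every $\lambda\in\mathbb R^q$.

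I would then bridge from ordinary to hypergroup characters in two small steps. Writing
$$\psi_k(\lambda) = e^{-i\sqrt k\langle\lambda,m_{\bf 1}(\nu)\rangle}\int_{C_q^B} e^{i\langle\lambda/\sqrt k,t\rangle}\, d\nu^{(k)}(t),$$
the uniform bound $\|m_{\bf 1}(t)-t\|\le C$ of Proposition \ref{1.moment-function-bc}(2) lets me replace $e^{i\langle\lambda/\sqrt k,t\rangle}$ by $e^{i\langle\lambda/\sqrt k,m_{\bf 1}(t)\rangle}$ with an error that is $O(\|\lambda\|/\sqrt k)$ uniformly in $t$. Next, Proposition \ref{1.moment-function-bc}(3) applied at the spectral point $\lambda/\sqrt k$ gives
$$\bigl|e^{i\langle\lambda/\sqrt k,m_{\bf 1}(t)\rangle}-\phi^p_{-i\rho^{BC}-i\lambda/\sqrt k}(t)\bigr|\le C\|\lambda\|^2/k\quad\text{uniformly in }t,$$
so this substitution introduces at most an $O(\|\lambda\|^2/k)$ error after integrating against $\nu^{(k)}$.

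At this point the integral is the hypergroup Fourier transform at $-i\rho^{BC}-i\lambda/\sqrt k$. The product formula (\ref{convo-formel}) turns $*_p$-convolution into pointwise multiplication of the functionals $\mu\mapsto \int\phi_\mu^p\,d\cdot$, so $\int\phi_\mu^p\,d\nu^{(k)} = \hat\nu(\mu)^k$ with $\hat\nu(\mu):=\int\phi_\mu^p\,d\nu$. Taylor-expanding $\hat\nu$ at $-i\rho^{BC}$ using the definition of the moment functions (\ref{def-m1-bc}) and exchanging differentiation and integration (legitimate by the second-moment hypothesis) yields
$$\hat\nu(-i\rho^{BC}-i\lambda/\sqrt k) = 1 + \frac{i}{\sqrt k}\langle\lambda,m_{\bf 1}(\nu)\rangle - \frac{1}{2k}\lambda^t m_{\bf 2}(\nu)\lambda + o(1/k).$$
Taking logarithms, multiplying by $k$, and collecting the correction $\tfrac12\langle\lambda,m_{\bf 1}(\nu)\rangle^2$ coming from $\log(1+z) = z - z^2/2 + \ldots$ into the quadratic form $m_{\bf 2}(\nu) - m_{\bf 1}(\nu)^t m_{\bf 1}(\nu) = \Sigma^2(\nu)$, I obtain
$$\hat\nu(-i\rho^{BC}-i\lambda/\sqrt k)^k = e^{i\sqrt k\langle\lambda,m_{\bf 1}(\nu)\rangle}\cdot e^{-\frac12\lambda^t\Sigma^2(\nu)\lambda}\bigl(1+o(1)\bigr).$$
Multiplying by $e^{-i\sqrt k\langle\lambda,m_{\bf 1}(\nu)\rangle}$ and folding back in the two small approximation errors completes the verification.

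The main obstacle is the Taylor remainder in the expansion of $\hat\nu$: a direct approach would require third derivatives of $\phi_{-i\rho^{BC}-i\lambda}^p(t)$ in $\lambda$ and hence third $\nu$-moments, which are not assumed. This is precisely the role of the uniform quadratic comparison in Proposition \ref{1.moment-function-bc}(3), which replaces the hypergeometric character by an ordinary exponential $e^{i\langle\mu,m_{\bf 1}(t)\rangle}$ up to an error controlled by $\|\mu\|^2$ alone. Expanding this exponential then requires only integrability of $m_{\bf 1}$ and $m_{\bf 2}$ with respect to $\nu$, which together with the growth estimates of Proposition \ref{2.moment-function-bc}(4),(5) is ensured by the second-moment assumption; a standard truncation and dominated convergence argument then supplies the $o(\|\mu\|^2)$ remainder and closes the proof.
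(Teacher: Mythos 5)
Your proposal is correct and follows essentially the same route as the paper: reduce the ordinary characteristic function of $(\tilde S_k-k\,m_{\bf 1}(\nu))/\sqrt k$ to the $k$-th power of the spherical Fourier transform $\tilde\nu$ via the multiplicativity of the hypergeometric characters, bridge the exponential $e^{i\langle\lambda/\sqrt k,\,t\rangle}$ to $\phi^p$ using Proposition \ref{1.moment-function-bc}(2) and (3), Taylor-expand $\tilde\nu$ to second order using Lemma \ref{differentiable-bc}, and invoke L\'evy's continuity theorem — which is exactly the argument of Theorem \ref{clt-a} transported to the $BC$-case, as the paper does. The only blemishes are cosmetic: the spectral parameter should be $-i\rho^{BC}-\lambda/\sqrt k$ (not $-i\rho^{BC}-i\lambda/\sqrt k$) to match the parameterization used in Proposition \ref{1.moment-function-bc}(3) and Lemma \ref{differentiable-bc}, and correspondingly the first-order Taylor coefficient should be $-i\,m_{\bf 1}(\nu)$; these sign/parameterization slips cancel in your final computation and do not affect the argument's validity.
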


\section{Some results from matrix analysis}

 It this section we collect some results from matrix analysis which are needed later. Possibly, some of these
results are well-known, but we were unable to find references.
We always assume that $\mathbb F=\mathbb R, \mathbb C, \mathbb H$ and $q\ge2$.
Moreover, $M_r(\mathbb F)$ is the vector space of all $r\times r$-matrices over
 $\mathbb F$.

We start with the following observation from linear algebra.

\begin{lemma}\label{glech-det}
Let $u\in U(q,\mathbb F)$ have the block structure
 $u=\left(\begin{array}{cc} u_1&*\\ *&u_2\end{array}\right)$
with quadratic blocks $u_1\in M_r(\mathbb F)$ and $u_2\in M_{q-r}(\mathbb F)$ 
with $1\le r\le q$. Then $|\det u_1|=|\det u_2|$.
\end{lemma}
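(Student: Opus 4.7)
The plan is to work directly from the unitarity relations $u^*u = uu^* = I_q$ and reduce the claim to the classical Sylvester determinant identity. Write
\[
u = \begin{pmatrix} u_1 & a \\ b & u_2 \end{pmatrix}
\]
with the off-diagonal blocks $a\in M_{r,q-r}(\mathbb F)$ and $b\in M_{q-r,r}(\mathbb F)$. Reading off the diagonal blocks of $u^*u = I_q$ gives $u_1^*u_1 = I_r - b^*b$, and reading off the diagonal blocks of $uu^* = I_q$ gives $u_2 u_2^* = I_{q-r} - bb^*$.

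For $\mathbb F = \mathbb R, \mathbb C$, taking determinants yields
\[
|\det u_1|^2 = \det(u_1^*u_1) = \det(I_r - b^*b), \qquad |\det u_2|^2 = \det(u_2 u_2^*) = \det(I_{q-r} - bb^*),
\]
and Sylvester's determinant identity $\det(I_r - b^*b) = \det(I_{q-r} - bb^*)$ (applied to the rectangular block $b$) finishes the proof after taking square roots.

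For $\mathbb F = \mathbb H$, I would pass to the standard $\mathbb C$-linear embedding $M_q(\mathbb H) \hookrightarrow M_{2q}(\mathbb C)$, which is a $*$-homomorphism and compatible with the block decomposition of $u$ (each block $u_1, u_2, a, b$ is mapped to a block of the corresponding complex matrix). Because the Dieudonn\'e determinant is by definition the square root of the complex determinant of the image, the identities of the previous paragraph go through verbatim for the complex images, and the same Sylvester argument applied to the image of $b$ gives $|{\det}_{\mathbb C} u_1| = |{\det}_{\mathbb C} u_2|$, which implies $|\det u_1| = |\det u_2|$ in the Dieudonn\'e sense.

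The only subtle point is verifying that the relations $u_1^*u_1 = I_r - b^*b$ and $u_2 u_2^* = I_{q-r} - bb^*$ remain valid after the quaternionic embedding; this is routine since the embedding preserves adjoints and block structure. Beyond that, everything reduces to the Sylvester identity $\det(I_m - XY) = \det(I_n - YX)$ for $X$ an $m \times n$ and $Y$ an $n \times m$ matrix, so there is no substantial obstacle.
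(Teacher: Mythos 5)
Your proof is correct, and it takes a genuinely different route from the paper's. The paper invokes the $KAK$-decomposition of $U(q,\mathbb F)$ with $K=U(r,\mathbb F)\times U(q-r,\mathbb F)$ (essentially the CS decomposition), writing $u$ as a product $\mathrm{diag}(a_1,b_1)\cdot R\cdot\mathrm{diag}(a_2,b_2)$ with $R$ a block rotation built from $\cos\phi_i$ and $\sin\phi_i$; then $u_1=a_1ca_2$ and $u_2$ is a unitary conjugate of $\mathrm{diag}(c,I_{q-2r})$, and the claim drops out immediately. Your argument instead extracts $u_1^*u_1=I_r-b^*b$ and $u_2u_2^*=I_{q-r}-bb^*$ directly from $u^*u=uu^*=I_q$ and appeals to Sylvester's identity $\det(I_m-XY)=\det(I_n-YX)$. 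This is more elementary in the sense that it does not rely on the (nontrivial) structural theorem behind the $KAK$/CS decomposition, at the cost of needing Sylvester and the determinant bookkeeping over $\mathbb H$.

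One small imprecision in the $\mathbb H$ case: the standard embedding $\psi:M_q(\mathbb H)\hookrightarrow M_{2q}(\mathbb C)$ (sending $A_1+A_2j\mapsto\bigl(\begin{smallmatrix}A_1&A_2\\-\bar A_2&\bar A_1\end{smallmatrix}\bigr)$) does not literally carry the $r\times(q-r)$ block partition of $u$ to the corresponding contiguous block partition of $\psi(u)$ — the rows and columns are interleaved. This is harmless, because what you actually need is not literal block-structure preservation but that $\psi$ is a $*$-homomorphism on rectangular matrices as well, so that $\psi(u_1^*u_1)=\psi(u_1)^*\psi(u_1)=I_{2r}-\psi(b)^*\psi(b)$ and similarly for $u_2$, and these identities hold because each of $u_1,u_2,b$ can be embedded separately. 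With that phrasing, the rest of your quaternionic argument — applying complex Sylvester to $\psi(b)$, using that $\det_{\mathbb C}\psi(\cdot)$ is real and nonnegative, and taking square roots to recover the Dieudonn\'e determinant — is sound.
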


\begin{proof} W.l.o.g. we  assume $2r\le q$. 
By the $KAK$-decomposition of  $U(q,\mathbb F)$
with $K=U(r,\mathbb F)\times U({q-r},\mathbb F)$ 
(see e.g.~Theorem VII.8.6 of \cite{H2}), we may write
$u$ as
$$u=\left(\begin{array}{cc} a_1&0\\0&b_1\end{array}\right)
\cdot\left(\begin{array}{ccc} c&s&0\\-s&c&0\\0&0&I_{q-2r}\end{array}\right)
\cdot\left(\begin{array}{cc} a_2&0\\0&b_2\end{array}\right)
$$
with $a_1,a_2\in U(r,\mathbb F)$, $b_1,b_2 \in U({q-r},\mathbb F)$ and 
$$c=diag(\cos\phi_1,\ldots,\cos\phi_r), \quad\quad s=diag(\sin\phi_1,\ldots,\sin\phi_r)$$ for suitable
$ \phi_1,\ldots,\phi_r\in\mathbb R$. Therefore, 
$$u_1=a_1ca_2
\quad\quad\text{and}\quad\quad
 u_2=b_1\left(\begin{array}{cc} c&0\\0&I_{q-2r}\end{array}\right)b_2$$
 which immediately implies the claim.
\end{proof}

We next turn to some results on the principal minors $\Delta_r$:

\begin{lemma}\label{pos-coeff}
Let $1\le r\le q$ be integers and
$u\in U(q,\mathbb F)$. Consider the polynomial
$$ h_r(a_1,\ldots, a_q):=\Delta_r(u^*\cdot diag(a_1,\ldots,a_q)\cdot u)
\quad\quad \text{for}\quad a_{1},\ldots, a_{q}\in \mathbb R.$$
Then
$$h_r(a_1,\ldots, a_q)=\sum_{1\le i_1<i_2<\ldots<i_r\le q}
 c_{i_1,\ldots,i_r} a_{i_1}\cdot a_{i_2}\cdots \cdot a_{i_r}$$
 with coefficients $c_{i_1,\ldots,i_r}\ge0$
for all $1\le i_1<i_2<\ldots<i_r\le q$ and
$\sum_{1\le i_1<i_2<\ldots<i_r\le q} c_{i_1,\ldots,i_r}=1$.
\end{lemma}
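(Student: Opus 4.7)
The plan is to recognize the expression as a direct application of the Cauchy--Binet formula. Let $U$ denote the $q\times r$ matrix formed by the first $r$ columns of $u$ and set $D:=\mathrm{diag}(a_1,\ldots,a_q)$. Since the top-left $r\times r$ block of $u^*Du$ is $U^*DU$, one has $h_r(a_1,\ldots,a_q)=\det(U^*DU)$. Writing this as the product $(U^*)\cdot(DU)$, Cauchy--Binet gives
$$\det(U^*DU)=\sum_{S}\det\bigl((U^*)_{\cdot,S}\bigr)\,\det\bigl((DU)_{S,\cdot}\bigr),$$
with $S=\{i_1<\cdots<i_r\}$ ranging over $r$-subsets of $\{1,\ldots,q\}$. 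By diagonality of $D$, $(DU)_{S,\cdot}=\mathrm{diag}(a_i:i\in S)\cdot U_S$, where $U_S$ is the $r\times r$ submatrix of $U$ with rows in $S$; and $(U^*)_{\cdot,S}=U_S^*$. Thus
$$h_r(a_1,\ldots,a_q)=\sum_{1\le i_1<\cdots<i_r\le q}|\det U_S|^2\,\prod_{i\in S} a_i,$$
identifying the coefficients as $c_{i_1,\ldots,i_r}=|\det U_S|^2\ge 0$.

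The normalization $\sum_S c_{i_1,\ldots,i_r}=1$ is then immediate from specializing $a_1=\cdots=a_q=1$: in that case $u^*Du=u^*u=I_q$, whence $h_r(1,\ldots,1)=\Delta_r(I_q)=1$.

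The only mildly delicate point is the case $\mathbb F=\mathbb H$, where the determinants are Dieudonn\'e determinants. I would handle this by passing to the complex realization $\chi\colon M_q(\mathbb H)\hookrightarrow M_{2q}(\mathbb C)$, which sends $U(q,\mathbb H)=Sp(q)$ into $U(2q,\mathbb C)$, respects the relevant block structure, and satisfies $\det_\mathbb{H}(A)=\det_\mathbb{C}(\chi(A))^{1/2}$ on Hermitian positive blocks. As an alternative that bypasses a quaternionic Cauchy--Binet entirely, I would observe that $h_r$ is multi-affine in $(a_1,\ldots,a_q)$ (each $a_i$ enters $u^*Du$ only through the rank-one summand $a_i(u^*e_i)(e_i^*u)$) and homogeneous of degree $r$ under the scaling $a_i\mapsto ta_i$, so that only monomials $\prod_{i\in S}a_i$ with $|S|=r$ can occur; the coefficient $c_S$ is then read off by specializing $a_i=\mathbf{1}_{i\in S}$, which renders $u^*Du$ positive semidefinite and forces $c_S=\Delta_r(u^*Du)\ge 0$, while the normalization follows as above. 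I expect the quaternionic bookkeeping to be the only small obstacle.
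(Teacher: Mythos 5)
Your Cauchy--Binet route is clean and even gives a more explicit answer than the paper over $\mathbb F=\mathbb R,\mathbb C$: the coefficients come out as $c_S=|\det U_S|^2$, a stronger statement than mere nonnegativity. The paper itself does not use Cauchy--Binet; it argues directly on the polynomial $h_r$: after establishing homogeneity of degree $r$, it rules out monomials with a repeated index by evaluating $h_r$ at $a$ supported on $n<r$ coordinates, noting that $u^*\mathrm{diag}(a)u$ then has rank at most $n<r$, so $\Delta_r$ vanishes identically on this subspace; nonnegativity is read off from $c_{1,\ldots,r}=h_r(1,\ldots,1,0,\ldots,0)=\Delta_r(u^*\mathrm{diag}(I_r,0)u)\ge 0$, and the normalization from $h_r(1,\ldots,1)=1$. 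Your ``alternative route'' (multi-affinity plus homogeneity plus specialization to indicators) is essentially the paper's argument repackaged, with the rank step replaced by multi-affinity.

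The caveat is the quaternionic case, which you rightly flag but underweight. A direct Cauchy--Binet does not make sense for the Dieudonn\'e/Moore determinant because it is not alternating row-multilinear, and the complex realization $\chi$ produces a $2q\times 2q$ matrix with $\mathrm{diag}(a)$ doubled, so the Cauchy--Binet sum runs over $2r$-subsets of $\{1,\ldots,2q\}$; to return to $r$-subsets of $\{1,\ldots,q\}$ you need the perfect-square structure of $\det_{\mathbb C}\circ\chi$ on Hermitian blocks plus a regrouping argument, which is more than ``bookkeeping.'' Likewise, the multi-affinity in your alternative (that $\Delta_r(M+a\,vv^*)$ is affine in $a$) is immediate over $\mathbb R,\mathbb C$ via the matrix determinant lemma, but for the Moore determinant it is a genuine lemma requiring its own justification. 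The paper's rank argument sidesteps both issues: singularity of $\chi(A)$ is equivalent to singularity of $A$ over $\mathbb H$, so the vanishing of $\Delta_r$ on matrices of rank $<r$ holds verbatim; and homogeneity follows from $\Delta_r(tA)=t^r\Delta_r(A)$, which is clear since $\det_{\mathbb C}(t\chi(A))=t^{2r}\det_{\mathbb C}\chi(A)$. So your approach is correct in spirit and more informative over $\mathbb R,\mathbb C$, but the paper's rank-based argument is the one that transfers uniformly to $\mathbb H$ without extra work.
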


\begin{proof}
Clearly, $h_r$
is  homogeneous of degree $r$, i.e.,
$$h_r(a_1,\ldots, a_q)=\sum_{1\le i_1\le i_2\le\ldots\le i_r\le q}
 c_{i_1,\ldots,i_r} a_{i_1}\cdot a_{i_2}\cdots \cdot a_{i_r}.$$
We first check that $c_{i_1,\ldots,i_r}\ne0$ is possible only for coefficients with
$1\le i_1<i_2<\ldots<i_r\le q$. For this consider indices $i_1,\ldots,i_r$ with
$|\{i_1,\ldots,i_r\}|=:n<r$. By changing the numbering of the variables
$a_1,\ldots,a_q$ (and  of rows and columns of $u$ in an appropriate
way), we  may assume that $\{i_1,\ldots,i_r\}=\{1,\ldots, n\}$. In this case,
$u^*\cdot diag(a_1,\ldots,a_n,0,\ldots,0)\cdot u$ has rank at most $n<r$.
Thus
$$0=h_r(a_1,\ldots,a_n,0,\ldots,0)=\sum_{1\le i_1\le i_2\le\ldots\le i_r\le n}
 c_{i_1,\ldots,i_r} a_{i_1}\cdot a_{i_2}\cdots \cdot a_{i_r}$$
for all $a_1,\ldots,a_n\in \mathbb R$. This yields $c_{i_1,\ldots,i_r}=0$ for $1\le i_1\le
i_2\le\ldots\le i_r\le n$. Therefore, for suitable coefficients,
$$h_r(a_1,\ldots, a_q)=\sum_{1\le i_1<i_2<\ldots<i_r\le q}
 c_{i_1,\ldots,i_r}a_{i_1}\cdot a_{i_2}\cdots \cdot a_{i_r}.$$

For the nonnegativity we again may restrict our attention to the coefficient
$c_{1,\ldots,r}$. In this case, with respect to the usual ordering of positive
definite matrices,
$$0\le \left(\begin{array}{cc} I_r&0\\0&0\end{array}\right)\le I_q
\quad\quad\text{and thus}\quad\quad
0\le u^*\left(\begin{array}{cc} I_r&0\\0&0\end{array}\right)u\le I_q.$$
As this inequality holds also for the upper left $r\times r$ block, 
$$c_{1,\ldots,r}=h_r(1,\ldots,1,0,\ldots,0)=
\Delta_r\left(u^*\left(\begin{array}{cc}
  I_r&0\\0&0\end{array}\right)u\right)\ge0.$$
Finally, as
$$\sum_{1\le i_1<i_2<\ldots<i_r\le q} c_{i_1,\ldots,i_r}= h_r(1,\ldots,1)=1,$$
the proof is complete.
\end{proof}

Let us keep the notation of Lemma \ref{pos-coeff}. We  compare
 $h_r$ with the homogeneous polynomial
\begin{equation}\label{C-r}
C_r(a_1,\ldots,a_q):=\frac{1}{{q\choose r}}
\sum_{1\le i_1<i_2<\ldots<i_r\le q}a_{i_1} a_{i_2}\cdots \cdot a_{i_r}>0 
\quad\quad(r=1,\ldots,q).
\end{equation}

\begin{lemma}\label{abs}
For all $a_1,\ldots,a_q>0$,
$$0<\frac{C_r(a_1,\ldots,a_q)}{h_r(a_1,\ldots,a_q)}\le \frac{1}{{q\choose r}} 
\sum_{1\le i_1<i_2<\ldots<i_r\le q}c_{i_1,\ldots,i_r}(u)^{-1},$$
where, depending on $u$, on both sides the value $\infty$ is possible.
\end{lemma}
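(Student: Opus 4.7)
My plan is to exploit the fact that, by Lemma \ref{pos-coeff}, the polynomial $h_r$ is a nonnegative-coefficient linear combination of exactly the same monomials that appear in $\binom{q}{r}C_r$, with the coefficients $c_I$ summing to $1$. The inequality should then reduce to the trivial observation that each summand of a nonnegative sum is dominated by the whole sum.

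Concretely, I fix $a_1,\ldots,a_q>0$ and index the $\binom{q}{r}$ tuples by $I=(i_1,\ldots,i_r)$ with $1\le i_1<\cdots<i_r\le q$, writing $m_I:=a_{i_1}\cdots a_{i_r}>0$ and $c_I:=c_{i_1,\ldots,i_r}(u)\ge 0$. By Lemma \ref{pos-coeff} we have $h_r=\sum_I c_I m_I$ and $\sum_I c_I=1$; in particular $h_r>0$, so the left-hand side $C_r/h_r$ is finite and strictly positive. The core of the argument is the pointwise bound
\begin{equation*}
c_I m_I \;\le\; \sum_J c_J m_J \;=\; h_r \qquad\text{for every } I,
\end{equation*}
which holds because all terms $c_J m_J$ are nonnegative. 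When $c_I>0$ this rearranges to $m_I\le h_r/c_I$; when $c_I=0$ the convention $c_I^{-1}=+\infty$ makes the same inequality hold vacuously. Summing over all $I$ then gives
\begin{equation*}
\binom{q}{r}C_r(a_1,\ldots,a_q) \;=\; \sum_I m_I \;\le\; h_r(a_1,\ldots,a_q)\sum_I c_I^{-1},
\end{equation*}
and dividing through by $h_r>0$ produces the asserted bound.

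I do not expect any serious obstacle here; the argument is essentially a single-line manipulation of the representation supplied by Lemma \ref{pos-coeff}. The only small point of care is the degenerate case in which one or more $c_I$ vanish: then the right-hand side is $+\infty$, which is explicitly allowed in the statement, and the inequality holds automatically since $C_r/h_r$ is always finite.
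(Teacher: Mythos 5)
Your proof is correct and rests on the same key input as the paper's, namely the representation $h_r=\sum_I c_I m_I$ from Lemma \ref{pos-coeff}. The paper's proof is marginally different in bookkeeping: it bounds each $m_I\le \bigl(\max_J c_J^{-1}\bigr)c_I m_I$, sums to get $\binom{q}{r}C_r\le \bigl(\max_J c_J^{-1}\bigr)h_r$, and then tacitly uses $\max_J c_J^{-1}\le\sum_J c_J^{-1}$; you instead bound each individual term by $c_I m_I\le h_r$, i.e.\ $m_I\le h_r\,c_I^{-1}$, and sum directly, which reaches the stated inequality with the $\sum$ on the right in one step and handles the $c_I=0$ case transparently via the $+\infty$ convention. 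Both arguments are essentially the same single-line manipulation and both are valid.
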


\begin{proof} Positivity is clear by Lemma \ref{pos-coeff}. Moreover,
\begin{align}
C_r(a_1,\ldots,a_q)=&\frac{1}{{q\choose r}}
\sum_{1\le i_1<i_2<\ldots<i_r\le q}a_{i_1} a_{i_2}\cdots \cdot a_{i_r}
\notag\\
\le&\frac{\max_{1\le i_1<i_2<\ldots<i_r\le q}c_{i_1,\ldots,i_r}^{-1}}{{q\choose r}}
\sum_{1\le i_1<i_2<\ldots<i_r\le q}c_{i_1,\ldots,i_r}a_{i_1} a_{i_2}\cdots \cdot a_{i_r}
\notag
\end{align}
which immediately leads to the claim.
\end{proof}

We shall also need an  integrability result for principal minors of matrices 
$k\in K:=U(q,\mathbb F)$.
For this, we write $k$ as block matrix
$k=\left(\begin{array}{cc} k_r&*\\ *&k_{q-r}\end{array}\right)$ with
 $k_r\in M_r(\mathbb C)$ and $k_{q-r}\in M_{q-r}(\mathbb C)$.

 \begin{proposition}\label{int-finite} Keep the block matrix notation
   above. For $0\le \epsilon<1/2$,
 $$\int_K |\det k_r|^{-2\epsilon}\> dk<\infty.$$
\end{proposition}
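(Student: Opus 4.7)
The plan is to reduce the integral over $K = U(q,\mathbb{F})$ to a product of one-dimensional integrals by means of the polar ($KAK$) decomposition with respect to the subgroup $K' := U(r,\mathbb{F}) \times U(q-r,\mathbb{F})$, using precisely the parametrization from the proof of Lemma~\ref{glech-det}. Since that lemma gives $|\det k_r| = |\det k_{q-r}|$, I may assume $2r \le q$. Then every $k \in K$ is of the form $k = u_1 a(\phi) u_2$ with $u_1, u_2 \in K'$ and
$$a(\phi) = \begin{pmatrix} c & s & 0 \\ -s & c & 0 \\ 0 & 0 & I_{q-2r} \end{pmatrix}, \quad c = \mathrm{diag}(\cos\phi_i),\quad s = \mathrm{diag}(\sin\phi_i),\quad \phi_i \in [0,\pi/2].$$
Exactly as in that proof, $k_r = a_1 c a_2$ with $a_1, a_2 \in U(r,\mathbb{F})$, so $|\det k_r| = \prod_{i=1}^r \cos\phi_i$ (with the Dieudonn\'e determinant understood in case $\mathbb{F} = \mathbb{H}$).

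The quotient $K/K'$ is the compact Grassmannian of $r$-planes in $\mathbb{F}^q$, and the Weyl integration formula on this symmetric space disintegrates the Haar measure as $dk = c_0 J(\phi)\, du_1\, du_2\, d\phi$ over $K' \times K' \times [0,\pi/2]^r$, with Jacobian of the standard form
$$J(\phi) = \prod_{i=1}^r (\sin\phi_i)^{d(q-2r)}(\sin 2\phi_i)^{d-1} \prod_{1\le i<j\le r} \bigl|\sin(\phi_i - \phi_j)\sin(\phi_i + \phi_j)\bigr|^{d},$$
corresponding to the multiplicities $d(q-2r)$, $d-1$, $d$ of the restricted roots $\pm e_i$, $\pm 2e_i$, $\pm e_i \pm e_j$ of the underlying $BC_r$ system (with the natural convention $0^0 = 1$ handling the boundary subcase $q = 2r$, where the short roots drop out). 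Substituting $|\det k_r|^{-2\epsilon} = \prod_i (\cos\phi_i)^{-2\epsilon}$, the proposition reduces to the finiteness of
$$\int_{[0,\pi/2]^r}\prod_{i=1}^r (\cos\phi_i)^{-2\epsilon} J(\phi)\, d\phi.$$

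Among the factors in $J(\phi)$, only $(\sin 2\phi_i)^{d-1} \asymp (\cos\phi_i)^{d-1}$ vanishes as $\phi_i \to \pi/2$; the factor $(\sin\phi_i)^{d(q-2r)}$ tends to $1$ there, and the Vandermonde factors $\sin(\phi_i \pm \phi_j)$ remain bounded away from zero when the other $\phi_j$'s lie in the interior, and can only improve matters when several $\phi_i$ approach $\pi/2$ simultaneously. Thus local integrability near each boundary face $\phi_i = \pi/2$ is governed by
$$\int_0^{\pi/2} (\cos\phi)^{d-1-2\epsilon}\, d\phi,$$
which is finite iff $2\epsilon < d$. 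Since $d \in \{1,2,4\}$, the hypothesis $\epsilon < 1/2$ always suffices, and is sharp precisely in the real case $d=1$. The main technical point is the correct bookkeeping of the restricted root multiplicities across the three fields and the degenerate case $q = 2r$; once $J(\phi)$ is identified, the result follows from the one-variable estimate above.
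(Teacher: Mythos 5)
Your argument is correct but takes a genuinely different route from the paper's. After the common reduction to $2r\le q$ via Lemma~\ref{glech-det}, the paper expresses $\int_K |\det k_r|^{-2\epsilon}\,dk$ as an integral over the matrix ball $B_r$ with density $\Delta(I_r-w^*w)^{(q-2r+1)d/2-1}$ (via the truncation lemma of~\cite{R2}), then applies the factorization diffeomorphism $P:B^r\to B_r$ of~\cite{R1} together with Lemma~\ref{det-P}, and finishes by Fubini with a one-dimensional inner integral killing the singularity of the Dieudonn\'e-determinant factor. You instead invoke the $K'AK'$ (polar) decomposition of $K$ relative to $K'=U(r,\mathbb F)\times U(q-r,\mathbb F)$ and the Weyl integration formula for the compact Grassmannian $K/K'$, which trades the matrix-ball density for the restricted-root density $J(\phi)$ and reduces everything to the scalar integral $\int_0^{\pi/2}(\cos\phi)^{d-1-2\epsilon}\,d\phi$. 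This is cleaner conceptually and makes the sharpness of the threshold $\epsilon<1/2$ transparent: the singularity at $\phi_i=\pi/2$ is tempered by $(\sin 2\phi_i)^{d-1}$, giving convergence exactly for $\epsilon<d/2$, with $d=1$ the binding case. The trade-off is that your route presupposes the Weyl integration formula with the correct $BC_r$ multiplicities $\bigl(d(q-2r),\,d-1,\,d\bigr)$ for the compact Grassmannian, which is standard but not developed in the paper, whereas the paper's argument stays within the matrix-ball machinery (\cite{R1},\cite{R2}) that it already uses throughout. Both are complete; your bound on the Vandermonde factors is handled correctly, since they appear in the numerator and can only decrease the integrand, so the crude bound $J(\phi)\le\prod_i(\sin 2\phi_i)^{d-1}$ already suffices.
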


\begin{proof}
The statement is clear for $r=q$.
 By Lemma \ref{glech-det} we may also assume 
$1\le r\le q/2$. In this case, we use the 
matrix ball 
$$B_r:= \{ w\in M_{r}(\mathbb F):\> w^*w\le I_r\}$$ 
as well as the ball 
 $B:= \{ y \in M_{1,r}(\mathbb F)\equiv \mathbb F^r: \> \|y\|_2^2\le 1\}$. We
 conclude from the truncation lemma 2.1 of \cite{R2} that
$$\int_K |\det k_r|^{-2\epsilon}\> dk=\frac{1}{\kappa_r}
\int_{B_r}|\det w|^{-2\epsilon}
\Delta(I_r-w^*w)^{(q-2r+1)\cdot d/2-1}\, dw$$
where $dw$ is the Lebesgue measure on the ball $B_r$ and
 $$\kappa_r:=\int_{B_r} \det(I_r-w^*w)^{(q-2r+1)\cdot d/2-1}\> dw\in ]0,\infty[.$$
Moreover,
by Lemma 3.7 and Corollary 3.8 of \cite{R1}, the mapping $P:B^r\to B_r$
with
\begin{equation}\label{trafo-P}
 P(y_1, \ldots, y_r):= \begin{pmatrix}y_1\\y_2(I_r-y_1^*y_1)^{1/2}\\ 
\vdots\\
  y_r(I_r-y_{r-1}^*y_{r-1})^{1/2}\cdots (I_r-y_{1}^*y_{1})^{1/2}\end{pmatrix}
\end{equation}
establishes a diffeomorphism such that the image of the measure 
$\det(I_r-w^*w)^{ (q-2r+1)\cdot d/2-1   }dw$
under  $P^{-1}$ is 
 $\,\prod_{j=1}^{r}(1-\|y_j\|_2^2)^{(q-r-j+1)\cdot d/2-1}dy_1\ldots
dy_r$. Moreover, we show in Lemma \ref{det-P} below that 
$$\det P(y_1, \ldots, y_r)=\det\left(\begin{array}{cc} y_1\\ \vdots\\y_r
\end{array}\right).$$
We thus conclude that
\begin{equation}
\int_K |\det k_r|^{-2\epsilon}\> dk=\frac{1}{\kappa_r} \int_B \ldots\int_B 
\left|\det\left(\begin{array}{cc} y_1\\ \vdots\\y_r
\end{array}\right)\right|^{-2\epsilon}
\prod_{j=1}^{r}(1-\|y_j\|_2^2)^{ (q-r-j+1)\cdot d/2-1   }dy_1\ldots
dy_r.
\end{equation}
This integral is finite for $\epsilon<1/2$, as 
one can use Fubini with an one-dimensional inner integral
 w.r.t.~the (1,1)-variable. After this inner integration, no further
 singularities appear  from the determinant-part in the remaining integral.
\end{proof}

\begin{lemma}\label{det-P} Keep the notations of the preceding proof. 
For all $y_1,\ldots,y_r\in B$,
$$\det P(y_1, \ldots, y_r)=\det\left(\begin{array}{cc} y_1\\ \vdots\\y_r
\end{array}\right).$$
\end{lemma}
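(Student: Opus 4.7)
The plan is to exhibit a factorization $P(y_1,\ldots,y_r)=L\cdot Y$, where $Y$ denotes the $r\times r$ matrix with rows $y_1,\ldots,y_r$ and $L$ is unit lower triangular over $\mathbb F$; multiplicativity of the (Dieudonn\'e) determinant together with $\det L=1$ will then yield $\det P = \det Y$ at once.

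The key linear-algebraic input is that for any $v\in M_{1,r}(\mathbb F)\equiv\mathbb F^r$ with $\|v\|_2\le 1$, the Hermitian matrix $v^*v$ is of rank one with single nonzero eigenvalue $\|v\|_2^2$. Setting $P_v:=v^*v/\|v\|_2^2$ (for $v\ne0$) and $\alpha_v:=1-\sqrt{1-\|v\|_2^2}\in[0,1]$, the spectral decomposition on $\mathrm{span}(v^*)$ and its orthogonal complement gives
\[
 (I_r-v^*v)^{1/2}=I_r-\alpha_v P_v
\]
(with $\alpha_v P_v:=0$ at $v=0$). For any row vector $w\in\mathbb F^r$ this immediately implies
\[
 w(I_r-v^*v)^{1/2}=w-\bigl(\alpha_v\,wv^*/\|v\|_2^2\bigr)\, v,
\]
so right multiplication by $(I_r-v^*v)^{1/2}$ adds a \emph{left} $\mathbb F$-multiple of $v$ to $w$, and changes no other row.

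Iterating this observation along the product $y_jM_j:=y_j(I_r-y_{j-1}^*y_{j-1})^{1/2}\cdots(I_r-y_1^*y_1)^{1/2}$, a straightforward induction on the number of factors shows that
\[
 y_jM_j=y_j+\sum_{i=1}^{j-1}\ell_{ji}\,y_i \qquad(j=1,\ldots,r)
\]
for suitable scalars $\ell_{ji}\in\mathbb F$, each acting as a left multiplier of the corresponding $y_i$. This is precisely the statement $P(y_1,\ldots,y_r)=L\cdot Y$ with $L=(\ell_{ji})_{1\le j,i\le r}$, $\ell_{jj}=1$, $\ell_{ji}=0$ for $i>j$, and the proof is complete.

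The one delicate point is to respect the correct direction of the scalar multipliers when $\mathbb F=\mathbb H$, so as to stay compatible with the Dieudonn\'e determinant (which is invariant under left elementary row operations and satisfies $\det(AB)=\det A\cdot\det B$). This is automatic in our setting: every $\ell_{ji}$ arises from the formula $\alpha\,wy_i^*/\|y_i\|_2^2\in\mathbb H$ as a \emph{left} multiplier of $y_i$, so $\det L=1$ in all three cases $\mathbb F=\mathbb R,\mathbb C,\mathbb H$.
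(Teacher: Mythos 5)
Your proof is correct and follows essentially the same route as the paper: the key step in both is the observation that right multiplication by $(I_r-v^*v)^{1/2}$ acts as the identity on $v^\perp$ and by the scalar $\sqrt{1-\|v\|_2^2}$ on $\mathrm{span}(v)$, so it only changes each subsequent row by adding a left multiple of $v$. You package this as an explicit unit lower triangular factorization $P=LY$ with $\det L=1$, whereas the paper phrases it as peeling off one factor at a time via multilinearity of the determinant; these are the same argument, and your extra care about the Dieudonn\'e determinant over $\mathbb H$ is a welcome addition.
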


\begin{proof}
Fix $y_1\in B$. The mapping  $y\mapsto y(I_r-y_1^*y_1)^{1/2}$ on $B$
has the following form: If $y$ is written as $y=ay_1+y^\perp$ in a unique way
with $a\in\mathbb F$ and $y^\perp\perp y_1$, then
  $y(I_r-y_1^*y_1)^{1/2}=\sqrt{1-\|y_1\|_2^2}\cdot a y_1+y^\perp$ 
(write $I_r-y_1^*y_1$ in an orthonormal basis with $y_1/\|y_1\|_2$ as a member!). Using
  linearity of the determinant in all lines, we thus conclude that
$$\det\begin{pmatrix}y_1\\ y_2(I_r-y_1^*y_1)^{1/2}\\ 
\vdots\\
  y_r(I_r-y_{r-1}^*y_{r-1})^{1/2}\cdots (I_r-y_{1}^*y_{1})^{1/2}\end{pmatrix}
=\det\begin{pmatrix}y_1\\ y_2\\ y_3(I_r-y_2^*y_2)^{1/2}\\ 
\vdots\\ y_r(I_r-y_{r-1}^*y_{r-1})^{1/2}\cdots
 (I_r-y_{2}^*y_{2})^{1/2}\end{pmatrix}.$$
The lemma now follows by an obvious induction.
\end{proof}

In the end of this section we present a technical result which will be central
below to derive that the covariance matrices $\Sigma$ of Sections 2 and 3 have
maximal rank in the non-degenerated cases.

\begin{lemma}\label{lin-unab}
Let $a_1,\ldots,a_q\in]0,\infty[$  such that at least two of these numbers
 are different.
 Consider the diagonal matrix $a=diag(a_1,\ldots,a_q)$.
 Then the functions 
$$ f_r:U(q,\mathbb F)\to \mathbb R, \quad k\mapsto\ln\Delta_r(k^*ak)$$  
with $r=1,\ldots,q-1$ and the constant function 1 on $U(q,\mathbb F)$  are 
linearly independent.
\end{lemma}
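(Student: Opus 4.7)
The plan is to test the hypothesized relation $c_0 + \sum_{r=1}^{q-1} c_r f_r \equiv 0$ on a one-parameter family of elements of $U(q,\mathbb F)$ carefully chosen to isolate each $c_r$ individually. By hypothesis there exist $i_0<j_0$ with $a_{i_0}\ne a_{j_0}$, and I would exploit this together with permutation matrices in order to move this distinct pair into any prescribed positions.

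Fix integers $1\le i<j\le q$, pick a permutation $\pi\in S_q$ with $\pi(i)=i_0$, $\pi(j)=j_0$, and consider
\[ k_\theta := P_\pi \cdot R^{(i,j)}_\theta,\qquad \theta\in\mathbb R, \]
where $P_\pi$ is the permutation matrix and $R^{(i,j)}_\theta$ the real orthogonal plane rotation by angle $\theta$ in the $(i,j)$-coordinates, acting as identity elsewhere; both factors lie in $U(q,\mathbb F)$ for every $\mathbb F$. A direct computation shows that $k_\theta^*ak_\theta$ coincides with $\mathrm{diag}(a_{\pi(1)},\ldots,a_{\pi(q)})$ outside the $2\times 2$ block in rows and columns $i,j$, and that its $(i,i)$-entry equals $a_{i_0}\cos^2\theta+a_{j_0}\sin^2\theta$. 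Hence the top-left $r\times r$ submatrix of $k_\theta^*ak_\theta$ is independent of $\theta$ unless $i\le r<j$; in the remaining case the submatrix is still diagonal (the $j$-th index lies outside the block), and
\[ \Delta_r(k_\theta^*ak_\theta) = \Bigl(\prod_{\substack{m=1\\ m\ne i}}^{r} a_{\pi(m)}\Bigr)\bigl(a_{i_0}\cos^2\theta+a_{j_0}\sin^2\theta\bigr). \]

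Subtracting the hypothesized identity at $\theta=0$ from the identity at general $\theta$ annihilates all $r\notin\{i,\ldots,j-1\}$ and leaves
\[ \Bigl(\sum_{r=i}^{j-1} c_r\Bigr)\bigl[\ln(a_{i_0}\cos^2\theta+a_{j_0}\sin^2\theta)-\ln a_{i_0}\bigr]=0\quad\text{for all }\theta. \]
Since $a_{i_0}\ne a_{j_0}$, the bracket is a nonconstant function of $\theta$, so $\sum_{r=i}^{j-1} c_r=0$. Specializing to $j=i+1$ for each $i=1,\ldots,q-1$ yields $c_i=0$ in turn, and the original identity evaluated at $k=I_q$ then gives $c_0=0$.

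The only conceptual issue is the weakness of the hypothesis: only two of the $a_i$'s need differ, so no test with permutation matrices alone suffices (these just produce rearranged products of the $a_i$'s and give at most $q-1$ independent combinations of sums $\sum_{r\in S}c_r$). The point of the construction is that composing the one known distinct pair with a permutation and then with a single nearest-neighbour rotation generates enough degrees of freedom to separate the indices $r$ one at a time, which is exactly what is needed.
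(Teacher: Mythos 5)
Your construction is correct and gives a valid proof. To summarize the verification: for $k_\theta = P_\pi R^{(i,j)}_\theta$ with $\pi(i)=i_0$, $\pi(j)=j_0$, the matrix $k_\theta^*ak_\theta$ agrees with $P_\pi^*aP_\pi = \mathrm{diag}(a_{\pi(1)},\ldots,a_{\pi(q)})$ away from the $(i,j)$ block, the rotation is entirely contained in the top-left $r\times r$ corner when $r\ge j$ (so $\Delta_r$ is $\theta$-independent there by conjugation invariance), and for $i\le r<j$ the corner is diagonal with $(i,i)$-entry $a_{i_0}\cos^2\theta+a_{j_0}\sin^2\theta$. The bracket $\ln(a_{i_0}\cos^2\theta+a_{j_0}\sin^2\theta)-\ln a_{i_0}$ is indeed nonconstant (compare $\theta=0$ and $\theta=\pi/2$), so taking $j=i+1$ isolates $c_i=0$ for each $i$, and $k=I_q$ then gives $c_0=0$. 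The argument works for all three $\mathbb F$, since the real orthogonal rotations and permutation matrices lie in $U(q,\mathbb F)$, and the relevant minors are of real diagonal matrices where the Dieudonn\'e determinant reduces to the ordinary product of diagonal entries.

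Your route is genuinely different from the paper's. The paper reorders $a$ so that $a_1$ appears with some multiplicity $s$ and differs from each of $a_{s+1},\ldots,a_q$, chooses $q$ specific permutation matrices $k_1,\ldots,k_q$, and proves the $q\times q$ matrix $\bigl(\ln\Delta_r(k_l^*ak_l)\bigr)_{r<q}$ extended by a constant row is nonsingular via a direct determinant computation. Your one-parameter rotation trick replaces the WLOG reordering and the block-triangular determinant argument with a single differentiation-in-$\theta$ idea, and extracts each coefficient $c_i$ cleanly one at a time; this is shorter and arguably more transparent.

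However, your closing paragraph contains a false claim: permutation matrices alone \emph{do} suffice, and this is exactly what the paper's proof does. Your assertion that permutations give ``at most $q-1$ independent combinations of sums $\sum_{r\in S}c_r$'' is not correct: the paper exhibits $q$ permutations whose evaluation matrix (including the constant row) has nonzero determinant, i.e.\ rank $q$. One can also see this abstractly: writing $b_m=c_m+\cdots+c_{q-1}$ and $\tilde b=(b_1,\ldots,b_{q-1},0)$, evaluating the relation at permutation matrices says $\sum_m\tilde b_m\ln a_{\pi(m)}=-c_0$ for every $\pi$; by the rearrangement inequality this forces $\tilde b$ to be constant (hence zero, since $\tilde b_q=0$) whenever $(\ln a_m)_m$ is nonconstant, which it is by hypothesis. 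So the ``weakness of the hypothesis'' you flag is not an obstruction for the permutation-only approach. This misconception does not affect the validity of your main argument, but the parenthetical should be struck.
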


\begin{proof}
Without loss of generality we assume that 
 $a=diag(a_1,\ldots,a_1,a_{s+1},\ldots, a_q)$
where $a_1$ appears $s$-times with $1\le s\le q-1$, and where $a_1$ is
different from $a_{s+1},\ldots, a_q$.  

We consider permutation matrices 
$k_1,\ldots,k_q\in U(q,\mathbb F)$ as follows: $k_1$ is the identity. For
$l=2,\ldots,s$, the matrix $k_l^*ak_l$ appears from $a$ by interchanging the
entries 
$l-1$ and $s+1$, and finally, for
$l=s+1,\ldots,q$, the matrix $k_l^*ak_l$ appears from $a$ by interchanging the
entries $s$ and $l$.

We now form the $q\times q$-dimensional matrix $A$ with entries
$$A_{r,l}:=  \left\{ \begin{array}{cc}   \ln \Delta_r(k_l^*ak_l)
  &\text{for}\quad r=1,\ldots,q-1 \\
1 &\text{for}\quad r=q.\end{array}\right.$$
and check that $A$ is nonsingular which implies the claim.

For this we use the abbreviations $x:=\ln a_1$ 
and $y_l:=\ln a_{s+l}$ for $l=1,\ldots,q-s$.  We now subtract the first column of
$A$ from all other columns of $A$ and obtain the matrix
$$\left(\begin{array}{cccccccccccc}
x &\vline & y_1-x   &0&0&\cdots&0 &\vline&&&&\\
2x &\vline& y_1-x& y_1-x&&&    &\vline   &&&&\\
\vdots &\vline&\vdots&\vdots&\ddots&{\bf 0}& &\vline&&{\bf 0}&&\\
(s-1)x &\vline&y_1-x& y_1-x&\cdots& y_1-x&0 &\vline&0&\cdots&0&0\\
sx &\vline&y_1-x& y_1-x&\cdots& y_1-x& y_1-x &\vline& y_2-x&\cdots&y_{q-s-1}-x&y_{q-s}-x\\
\cline{1-12}\\
sx+y_1 &\vline&0&0&\cdots&0&0 &\vline&y_2-x&\cdots&y_{q-s-1}-x&y_{q-s}-x\\
sx+y_1+y_2 &\vline&0&0&\cdots&0&0 &\vline&0&\ddots&\vdots&y_{q-s}-x\\
\vdots &\vline&&&{\bf 0}&& &\vline&\vdots&{\bf 0}&\ddots&\vdots\\
sx+\sum_{l=1}^{s-q-1}y_l &\vline&0&0&\cdots&0&0 &\vline&0&\cdots&0&y_{q-s}-x\\
\cline{1-12}\\
1 &\vline&0&0&\cdots&0&0 &\vline&0&\cdots&0&0
\end{array}\right)$$

Using the block structure of $B$ and the triangular form of these blocks we
see that this matrix has the determinant
$$1\cdot (y_2-x)(y_3-x)\cdots (y_{q-s}-x)(y_1-x)^s\ne0.$$
Therefore, this matrix and thus $A$ are nonsingular as claimed.
\end{proof} 

\section{Oscillatory behavior of hypergeometric functions of type $A$ at the identity}

In this section we prove Propositions \ref{1.moment-function-a},
\ref{2.moment-function-a}, and  \ref{2.moment-function-a-pos-def}
 about the moment functions of first and second order
in Section 2. The most remarkable result in our eyes is the
 oscillatory behavior of hypergeometric functions of type $A$ at the identity character
in Proposition \ref{1.moment-function-a}(3) which is uniform for $t\in C_q^A$.

The proof of this fact relies on the results in Section 4 and on 
 the  following elementary observation:

\begin{lemma}\label{ln-est}
Let  $\epsilon\in]0,1]$, $M\ge1$ and $m\in\mathbb N$. Then there exists a constant $C=C(\epsilon,M,m)>0$
such that for all $z\in]0,M]$,
$$|\ln(z)|^m\le C\left(1+z^{-\epsilon}\right).$$
\end{lemma}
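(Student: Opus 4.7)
The plan is to split the interval $]0,M]$ at $z=1$ and treat the two pieces separately, using that the bound $1+z^{-\epsilon}$ controls $|\ln z|^m$ near $0$ via the fact that polynomials in $\ln(1/z)$ are dominated by any positive power of $1/z$, while on $[1,M]$ the logarithm is simply bounded.

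First I would consider $z\in[1,M]$. Here $|\ln z|\le \ln M$, so $|\ln z|^m\le(\ln M)^m$, and since $1+z^{-\epsilon}\ge 1$, the inequality holds with $C_1:=(\ln M)^m$. Next I would consider $z\in]0,1]$ and perform the substitution $u:=-\ln z\ge 0$, so that $|\ln z|^m=u^m$ and $z^{-\epsilon}=e^{\epsilon u}$. The continuous function $u\mapsto u^m e^{-\epsilon u}$ on $[0,\infty[$ tends to $0$ as $u\to\infty$ and is therefore bounded by some $C_2=C_2(\epsilon,m)$, which yields $|\ln z|^m\le C_2\,z^{-\epsilon}\le C_2(1+z^{-\epsilon})$.

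Finally I would take $C:=\max(C_1,C_2)$ to obtain the stated estimate uniformly on $]0,M]$. There is no real obstacle here; the only point to keep in mind is that the constant must be allowed to depend on $\epsilon$, $M$, and $m$, which is exactly what the statement permits.
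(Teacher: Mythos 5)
Your proof is correct and follows essentially the same route as the paper: split $]0,M]$ at $z=1$, handle $[1,M]$ by the trivial bound $|\ln z|\le\ln M$, and near $0$ show that $|\ln z|^m z^\epsilon$ stays bounded (the paper does this by the explicit calculus inequality $|x^\epsilon\ln x|\le 1/(e\epsilon)$, you do it via the substitution $u=-\ln z$ and boundedness of $u^m e^{-\epsilon u}$, which is the same observation in different clothing).
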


\begin{proof} Elementary calculus yields $|x^\epsilon\cdot\ln x|\le 1/(e\epsilon)$ for
$x\in]0,1]$ and
 the Euler number $e=2,71...$. This leads to the estimate for  $z\in]0,1]$.
The estimate is trivial for $z\in]1,M]$.
\end{proof}

\begin{proof}[Proof of Proposition \ref{1.moment-function-a}(3):]
Let $\lambda\in \mathbb R^q$, $t\in C_q^A$. Consider
 $$a:=(a_1,\ldots,a_q):= (e^{2t_1},\ldots,e^{2t_q}), \quad\text{and}\quad
a_t^2=e^{2\underline t}=diag(a_1,\ldots,a_q)\in
GL(q,\mathbb F).$$
 Then, by the Harish-Chandra integral representation 
 (\ref{int-rep-a}) and the integral representation of the moment functions in
(\ref{moment-function-a}),
we have to estimate
\begin{align}\label{diff1a}
R:=& R(\lambda,t):=
|\phi_{-i\rho-\lambda}^A(t)- e^{i\langle\lambda, m_{\bf 1}(t)\rangle}|
\\
=& \biggl|  \int_K exp\left(\frac{i}{2}\sum_{r=1}^q (\lambda_r-\lambda_{r+1})\cdot
\ln\Delta_r(k^*a_t^2k)\right)\> dk
\notag
\\
&\quad\quad
 -exp\left(\frac{i}{2} \int_K\sum_{r=1}^q 
(\lambda_r-\lambda_{r+1})\cdot\ln\Delta_r(k^*a_t^2k)\> dk\right)
\biggr|
\notag
 \end{align}
with the convention $\lambda_{q+1}:=0$. For $r=1,\ldots,q$, we now use the polynomial
$C_r$ from Eq.~(\ref{C-r})
and write the logarithms of the principal minors in (\ref{diff1a}) as 
\begin{equation}\label{log-prin-min}
\ln\Delta_r(k^*a_t^2k)=\ln C_r(a_1,\ldots,a_r) +\ln(H_r(k,a))
\quad\text{with}\quad
H_r(k,a):=\frac{\Delta_r(k^*a_t^2k)}{ C_r(a_1,\ldots,a_n)}.
 \end{equation}
With this notation and with $|e^{ix}|=1$ for $x\in\mathbb R$, we rewrite (\ref{diff1a}) as
\begin{align}\label{diff2a}
R=&\biggl|  \int_K exp\left(\frac{i}{2}\sum_{r=1}^q (\lambda_r-\lambda_{r+1})\cdot\ln(H_r(k,a))\right)\> dk
\notag
\\
&\quad\quad
 -exp\left(\frac{i}{2} \int_K\sum_{r=1}^q 
(\lambda_r-\lambda_{r+1})\cdot\ln(H_r(k,a))\> dk\right)\biggr|.
\end{align}
We now use the power series for both exponential functions where
 the terms of order 0 and 1 are equal. Hence, 
$R\le R_1+R_2$
for
$$R_1:=\int_K\biggl| 
exp\left(\frac{i}{2}\sum_{r=1}^q (\lambda_r-\lambda_{r+1})\cdot\ln(H_r(k,a))\right)
-\left(1+\frac{i}{2}\sum_{r=1}^q
(\lambda_r-\lambda_{r+1})\cdot\ln(H_r(k,a))\right)
\biggr|\> dk,$$
$$R_2:=\biggl|exp\left(\frac{i}{2} \int_K\sum_{r=1}^q
(\lambda_r-\lambda_{r+1})\cdot
\ln(H_r(k,a))\> dk\right)
\> -\> 1-\frac{i}{2}\int_K\sum_{r=1}^q (\lambda_r-\lambda_{r+1})\cdot\ln(H_r(k,a))\>
dk\biggr|.$$
Using the well-known elementary estimates $|\cos x-1|\le x^2/2$ and 
 $|\sin x-x|\le x^2/2$ for $x\in\mathbb R$, we obtain
$|e^{ix}-(1+ix)|\le x^2$ for $x\in\mathbb R$.
Therefore,
defining
$$A_m:= 2^{-m}
\int_K\biggl| \sum_{r=1}^q
(\lambda_r-\lambda_{r+1})\cdot\ln(H_r(k,a))\biggr|^m\>dk
\quad\quad(m=1,2),$$
we conclude that
$$R\le R_1+R_2\le A_2 +A_1^2.$$
In the following, let $D_1,D_2,\ldots$ suitable constants. As $A_1^2\le A_2$  by 
 Jensen's inequality, and as
$$A_2\le \|\lambda\|^2 \cdot D_1\cdot
\int_K\sum_{r=1}^q |\ln(H_r(k,a))|^2\>dk =:\|\lambda\|^2 \cdot B_2,$$
we obtain
$R\le B_2\cdot 2 \|\lambda\|^2 $.
To complete the proof, we must check that $B_2$, i.e., 
 the integrals
\begin{equation}\label{I-ma}
L_{r}:=\int_K |\ln(H_r(k,a))|^2\>dk
\end{equation}
remain bounded independent of 
$a_1,\ldots, a_q>0$ for $r=1,\ldots,q$.

For this fix  $r$.
 Lemma \ref{pos-coeff} in particular implies that for all $a_1,\ldots,a_q>0$,
$$\Delta_r(k^*a_t^2k)\le \sum_{1\le i_1<i_2<\ldots<i_r\le q}
a_{i_1}\cdot a_{i_2}\cdots \cdot a_{i_r} ={q\choose r}C_r(a_1,\ldots,a_q)$$
and $\Delta_r(k^*a_t^2k)>0$.
Hence, 
\begin{equation}\label{H-r}
0 <\frac{  \Delta_r(k^*a_t^2k) }{ C_r(a_1,\ldots,a_q)}=  H_r(k,a)  \le {q\choose r}.
\end{equation}
We conclude from (\ref{I-ma}), (\ref{H-r}) and Lemma \ref{ln-est} that for any 
$\epsilon\in]0,1[$ and suitable $D_2=D_2(\epsilon)$,
$$L_{r}\le D_2\int_K \left(1+ H_r(a_1,\ldots,a_q)^{-\epsilon}\right)\> dk.$$
Thus, by Lemma \ref{abs},
\begin{align}\label{suabs}
L_{r}\le& D_2 +D_3\int_K\left(  \sum_{1\le i_1<i_2<\ldots<i_r\le q} c_{i_1,\ldots,i_r}(k)^{-1}
  \right)^\epsilon\> dk
\notag\\
\le& D_2 +D_3\cdot {q\choose r}^\epsilon
 \sum_{1\le i_1<i_2<\ldots<i_r\le q}\int_K c_{i_1,\ldots,i_r}(k)^{-\epsilon}\> dk.
\end{align}
The right hand side of (\ref{suabs}) is independent of $a_1,\ldots,a_q$,
 and, by the definition of
 the $ c_{i_1,\ldots,i_r}(k)$ in Lemma \ref{pos-coeff},
 $\int_K c_{i_1,\ldots,i_r}(k)^{-\epsilon}\> dk$
is independent of $1\le i_1<i_2<\ldots<i_r\le q$.
 Therefore, it suffices to check that
\begin{equation}\label{final-est}
I_r:=\int_K c_{1,\ldots,r}(k)^{-\epsilon}\> dk =\int_K \Delta_r\left(k^*\left(\begin{array}{cc}
  I_r&0\\0&0\end{array}\right)k\right)^{-\epsilon}\> dk<\infty.
\end{equation}
For this, we write $k$ as block matrix
$k=\left(\begin{array}{cc} k_r&*\\ *&k_{q-r}\end{array}\right)$ with
 $k_r\in M_r(\mathbb C)$ and $k_{q-r}\in M_{q-r}(\mathbb C)$ and observe that
$$ \Delta_r\left(k^*\left(\begin{array}{cc}
  I_r&0\\0&0\end{array}\right)k\right) =\Delta_r\left(\begin{array}{cc}k_r^*k_r
&*\\ *&*\end{array}\right)=|\det k_r|^2.
$$
Therefore, (\ref{final-est}) follows from Proposition
\ref{int-finite}, which completes the proof of 
Proposition \ref{1.moment-function-a}(3).
\end{proof}

We now turn to the proof of the remaining parts of 
Proposition \ref{1.moment-function-a}.
Part (1)  is a
direct consequence of the law of large numbers \ref{lln-a}(1).
 Notice that in fact
the proof of this law of large numbers in Section 7 does not depend on
 Proposition \ref{1.moment-function-a}(1).
 Proposition \ref{1.moment-function-a}(2) is just part (3) of
 the following result:

\begin{lemma}\label{absch-moment-function-1-a}
For $r=1,\ldots,q$ let
$$s_r(t):= m_{(1,0,\ldots,0)}(t)+\cdots+  m_{(0,\ldots,0, 1,0,\ldots,0)}(t)
 \quad\quad\text{for} \quad
t\in C_q^A$$
be the sum of the first $r$ moment functions of first order.
Then:
\begin{enumerate}
\item[\rm{(1)}]For all $t\in C_q^A$,  $s_q(t)=t_1+t_2+\cdots+ t_q$.
\item[\rm{(2)}] There is a constant $C=C(q)$
 such that for all $r=1,\ldots,q$ and $t\in C_q^A$,
$$0\le t_1+t_2+\cdots+ t_r \> -\> s_r(t)\le C.$$
\item[\rm{(3)}]  There is a constant $C=C(q)$ such that for all $t\in C_q^A$
 $$\left\| t - m_{\bf 1}(t)\right\|\le C.$$
\end{enumerate}
\end{lemma}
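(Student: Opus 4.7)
The plan is to exploit a telescoping identity: since the first-order moment functions take the form
$$m_{(0,\ldots,1,\ldots,0)}(t) = \tfrac{1}{2}\int_K \ln\bigl(\Delta_j/\Delta_{j-1}\bigr)(u^{-1}e^{2\underline t}u)\,du$$
(with $\Delta_0\equiv 1$ and the $1$ in position $j$) by (\ref{moment-function-a}), summing over $j=1,\ldots,r$ collapses the differences of logarithms to
$$s_r(t) = \tfrac{1}{2}\int_K \ln\Delta_r(u^{-1}e^{2\underline t}u)\,du.$$
Part (1) is then immediate: $\Delta_q$ is the full (Dieudonn\'e) determinant, so $\Delta_q(u^{-1}e^{2\underline t}u) = e^{2(t_1+\cdots+t_q)}$, giving $s_q(t) = t_1+\cdots+t_q$.

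For part (2) I would invoke Lemma \ref{pos-coeff} with $a_j := e^{2t_j}$ to write
$$\Delta_r(u^{-1}e^{2\underline t}u) = \sum_{1\le i_1<\cdots<i_r\le q} c_{i_1,\ldots,i_r}(u)\,e^{2(t_{i_1}+\cdots+t_{i_r})},$$
with nonnegative coefficients summing to $1$. Since $t\in C_q^A$, the maximal exponent among these terms is $2(t_1+\cdots+t_r)$, so the convex-combination bound gives $\Delta_r(u^{-1}e^{2\underline t}u) \le e^{2(t_1+\cdots+t_r)}$; taking logarithms and integrating over $K$ yields $s_r(t) \le t_1+\cdots+t_r$. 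Retaining only the leading summand gives the reverse estimate $\Delta_r(u^{-1}e^{2\underline t}u)\ge c_{1,\ldots,r}(u)\,e^{2(t_1+\cdots+t_r)}$, and hence
$$s_r(t) \ge (t_1+\cdots+t_r) + \tfrac{1}{2}\int_K \ln c_{1,\ldots,r}(u)\,du.$$

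The main obstacle is showing that this residual integral is a finite negative constant, independent of $t$; the potential singularity is at those $u$ where $c_{1,\ldots,r}(u)$ vanishes. By the block-matrix computation in the proof of Lemma \ref{pos-coeff}, $c_{1,\ldots,r}(u) = |\det u_r|^2\in(0,1]$, where $u_r$ denotes the upper-left $r\times r$-block of $u$. Applying Lemma \ref{ln-est} with any $\epsilon\in(0,1/2)$ yields
$$|\ln c_{1,\ldots,r}(u)| \le D\bigl(1 + |\det u_r|^{-2\epsilon}\bigr),$$
and the right-hand side is $du$-integrable by Proposition \ref{int-finite}. This completes (2) with a constant $C$ depending only on $q$.

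Part (3) then follows by telescoping once more: writing $m_{e_r}(t) := m_{(0,\ldots,1,\ldots,0)}(t) = s_r(t)-s_{r-1}(t)$ and comparing with $t_r = (t_1+\cdots+t_r)-(t_1+\cdots+t_{r-1})$, the bound from (2) forces $|t_r - m_{e_r}(t)|\le 2C$ for every $r=1,\ldots,q$, and hence $\|t-m_{\bf 1}(t)\|\le 2C\sqrt{q}$.
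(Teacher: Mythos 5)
Your proof is correct and follows essentially the same route as the paper: the telescoping identity for $s_r$, Lemma \ref{pos-coeff} for the upper bound $s_r(t)\le t_1+\cdots+t_r$, and reduction of the lower bound to the integrability of $\ln c_{1,\ldots,r}(u) = 2\ln|\det u_r|$, furnished by Proposition \ref{int-finite}. The only (cosmetic) deviation is in the lower bound: you retain just the single leading summand $c_{1,\ldots,r}(u)\,e^{2(t_1+\cdots+t_r)}$, whereas the paper detours through $M(k) := \max_{i_1<\cdots<i_r} c_{i_1,\ldots,i_r}(k)^{-1}$ via Lemma \ref{abs} and then reduces $\int_K \ln M(k)\,dk$ to the same integral by symmetry — your version is slightly more direct but rests on the identical key facts.
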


\begin{proof}
By the integral representation  (\ref{moment-function-a}) of the moment
functions, we have
\begin{equation}\label{s-r-q}
s_r(t)=\frac{1}{2} \int_K \ln\Delta_r(k^*e^{2\underline t}k)\> dk
\quad\quad(r=1,\ldots,q).
\end{equation}
For $r=q$, this proves (1). Moreover, for $t\in C_q^A$ we have $t_1\ge
t_2\ge\ldots \ge t_q$. This and Lemma  \ref{pos-coeff} imply that for all
$k\in K$, 
\begin{equation}\label{est-delta-1}
\frac{1}{2} \ln\Delta_r(k^*e^{2\underline t}k)\le t_1+t_2+\cdots+ t_r.
\end{equation}
This and (\ref{s-r-q}) now lead to the first inequality of (2). 
For the second  inequality of (2), we use the notations 
of Lemmas \ref{pos-coeff} and  \ref{abs}. For $k\in K$ and 
$a_1:=e^{2t_1}\ge a_2:=e^{2t_2}\ge\ldots \ge a_q:=e^{2t_q}$ we obtain  from Lemma \ref{abs}
that
$$a_1\cdot a_2\cdots a_r\le {q \choose r} C_r(a_1,\ldots,a_q)\le
\Delta_r(k^*e^{2\underline t}k)\cdot M(k)$$
with 
$$M(k):=\max_{1\le i_1<\ldots<i_r\le q} c_{i_1,\ldots,i_r}(k)^{-1}$$
which may be equal to $\infty$ for some $k$. Therefore,
\begin{align}\label{est-delta-2}
t_1+t_2+\cdots+ t_r&=\frac{1}{2} \ln(a_1\cdot a_2\cdots a_r)=\frac{1}{2} \int_K 
\ln(a_1\cdot a_2\cdots a_r)\> dk
\\
&\le \frac{1}{2}\int_K \ln\Delta_r(k^*e^{2\underline t}k)\> dk + \int_K\ln M(k)\> dk
\notag\end{align}
with
 $$ \int_K\ln M(k)\> dk\le 
M:= \sum_{1\le i_1<\ldots<i_r\le q}\int_K\ln(c_{i_1,\ldots,i_r}(k)^{-1})\>
dk.$$
We claim that $M$ is finite. For this we observe that by the definition of the
 $c_{i_1,\ldots,i_r}(k)$ in Lemma  \ref{pos-coeff}, all integrals
in the sum in the definition of $M$ are equal. It is thus sufficient to
consider the summand with coefficient $c_{1,2,\ldots,r}(k)$.
 On the other hand,  we write
$k\in K$ as 
$$k=\left(\begin{array}{cc} k_1&*\\ *&*\end{array}\right)$$ with $r\times
  r$-block $k_1$ and observe that 
$$\int_K\ln(c_{1,2,\ldots,r}(k)^{-1})\> dk =-\int_K \ln\Delta_r\left(k^*
\left(\begin{array}{cc} I_r&0\\0&0\end{array}\right)k\right) \> dk
=-\int_K \ln\det(k_1^*k_1) \> dk, $$
which is finite as a consequence  of Lemma \ref{int-finite}. 
Therefore, $M$ is finite which proves (2). 

Finally, (3) is a consequence of (2).
\end{proof}

Lemma \ref{absch-moment-function-1-a}(3) implies that there exists a constant $C=C(q)>0$
such that for all $t\in C_q^A$, $\lambda\in \mathbb R^q$,
\begin{equation}
|e^{i\langle\lambda, t\rangle}-e^{i\langle\lambda, m_{\bf 1}(t)}\rangle|\le C\cdot\|\lambda\|.
\end{equation}
Therefore, we conclude from Proposition \ref{1.moment-function-a}(3):

\begin{corollary}\label{cor-absch}
There exists a constant $C=C(q)>0$ such that for all $t\in C_q^A$, $\lambda\in \mathbb R^q$,
$$\|\phi_{-i\rho-\lambda}(t)-e^{i\langle\lambda,t\rangle} \|\le
C\cdot(\|\lambda\|+\|\lambda\|^2)  .$$
\end{corollary}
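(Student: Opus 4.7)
The plan is to derive this as a direct triangle-inequality consequence of Proposition \ref{1.moment-function-a}(3) together with the displayed bound immediately preceding the corollary. I would insert the intermediate quantity $e^{i\langle\lambda,m_{\bf 1}(t)\rangle}$ and split
\[
|\phi_{-i\rho-\lambda}^A(t)-e^{i\langle\lambda,t\rangle}|
\le \bigl|\phi_{-i\rho-\lambda}^A(t)-e^{i\langle\lambda,m_{\bf 1}(t)\rangle}\bigr|
+\bigl|e^{i\langle\lambda,m_{\bf 1}(t)\rangle}-e^{i\langle\lambda,t\rangle}\bigr|.
\]

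For the first summand I would quote Proposition \ref{1.moment-function-a}(3) verbatim, which yields the bound $C_1\|\lambda\|^2$ with $C_1=C_1(q)$ uniformly in $t\in C_q^A$. For the second summand I would use the elementary estimate $|e^{ix}-e^{iy}|\le|x-y|$ for real $x,y$, applied to $x=\langle\lambda,m_{\bf 1}(t)\rangle$ and $y=\langle\lambda,t\rangle$. By Cauchy--Schwarz this is dominated by $\|\lambda\|\cdot\|m_{\bf 1}(t)-t\|$, and then Proposition \ref{1.moment-function-a}(2) (equivalently, Lemma \ref{absch-moment-function-1-a}(3)) supplies a uniform bound $\|m_{\bf 1}(t)-t\|\le C_2(q)$; this is exactly the displayed inequality just above the corollary. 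Taking $C:=\max(C_1,C_2)$ and combining the two estimates gives the asserted bound $C(\|\lambda\|+\|\lambda\|^2)$.

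There is no real obstacle here: all the analytic work has already been carried out, namely the quadratic-order cancellation at the identity character (the hard Proposition \ref{1.moment-function-a}(3), whose proof relies on the integrability estimates from Section 4), and the uniform comparison between $m_{\bf 1}(t)$ and $t$. The corollary is purely the arithmetic combination of these two facts via the triangle inequality.
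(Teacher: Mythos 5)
Your argument is correct and is exactly the paper's argument: the paper derives the corollary by combining Proposition \ref{1.moment-function-a}(3) with the displayed bound $|e^{i\langle\lambda,t\rangle}-e^{i\langle\lambda,m_{\bf 1}(t)\rangle}|\le C\|\lambda\|$ (which follows from Lemma \ref{absch-moment-function-1-a}(3)) via the same triangle inequality split through $e^{i\langle\lambda,m_{\bf 1}(t)\rangle}$.
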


We next turn to the proof of Proposition \ref{2.moment-function-a}.

\begin{proof}[Proof of Proposition \ref{2.moment-function-a}]
Let $t\in C_q^A$. Consider a non-trivial row vector $a=(a_1,\ldots,a_q)\in\mathbb
R^q\setminus\{0\} $ as well as the continuous functions
$$f_1(k):=\ln\Delta_1(k^*e^{2\underline t}k)\quad \text{and}
\quad
f_l(k):=\ln\Delta_l(k^* e^{2\underline t}  k)-\ln\Delta_{l-1}(k^*
e^{2\underline t}    k)
\quad(l=2,\cdots,q).$$
Then, by (\ref{moment-function-a}), (\ref{m1-vector}),  (\ref{m2-matrix}), and the Cauchy-Schwarz inequality,
\begin{equation}\label{CSU}
a\left( m_{\bf 2}(t)-m_{\bf 1}(t)^t m_{\bf 1}(t)\right)a^t
= \int_K \left(\sum_{l=1}^q a_l f_l(k)\right)^2\> dk 
-\left(\int_K \sum_{l=1}^q a_l f_l(k)\> dk\right)^2
\ge0.
\end{equation}
This shows part (1) of the proposition.
Moreover, for $t=c\cdot(1,\ldots,1)\in C_q^A$ with $c\in\mathbb R$, the
functions $f_l$ are constant on $K$ for all $l=1,\ldots,q$ which implies  
$\Sigma^2(t)=0$ and thus part (2).

For the proof of part (3) we notice that we have equality in (\ref{CSU}) 
 if and only if the function 
$$k\mapsto \sum_{l=1}^q a_l f_l(k)= (a_1-a_2)\ln\Delta_1(k^*e^{2\underline t} k)+\dots+ (a_{q-1}-a_q)\ln\Delta_{q-1}(k^*e^{2\underline t} k)
+a_q\ln\Delta_q(k^*e^{2\underline t}k)$$
 is constant on $K$. As $k\mapsto\ln\Delta_q(k^*e^{2\underline t}k)$ is constant on $K$, and as under the condition of (3),
the functions $k\mapsto\ln\Delta_r(k^*e^{2\underline t}k)$ ($r=1,\ldots,q-1$) and the constant function $1$ are linearly
 independent on $K$ by Lemma \ref{lin-unab}, the function 
 $k\mapsto \sum_{l=1}^q a_l f_l(k)$ is constant on $K$ precisely for
 $a_1=a_2=\ldots=a_q$.
 This proves that $\Sigma^2(t)$ has rank $q-1$ as claimed.

We next turn to part (4). We recall that Lemma \ref{pos-coeff} implies
$$2jt_q\le \ln\Delta_{j}(k^*e^{2\underline t}k)\le 2jt_1$$
 for $k\in K$, $t\in
C_q^A$, and $j=1,\ldots,q$. Therefore, by the integral representation 
 (\ref{moment-function-a}), 
\begin{align}|m_{j,l}(t)|\le& 
\frac{1}{4}\int_K \Bigl|\ln\Delta_{j}(k^*e^{2\underline t}k)-
\ln\Delta_{j-1}(k^*e^{2\underline t}k)\Bigr|\cdot
\Bigl|\ln\Delta_{l}(k^*e^{2\underline t}k)-
\ln\Delta_{l-1}(k^*e^{2\underline t}k)\Bigr|\> dk 
\notag\\
\le& \> \bigl((j-1)(t_1-t_q)+\max(|t_1|,|t_q|)\bigr)
\bigl((l-1)(t_1-t_q)+\max(|t_1|,|t_q|)\bigr)
\notag\end{align}
for $j,l=1,\ldots, q$ and $t\in C_q^A$. This implies part (4).

For the proof of part (5) we recall from the proof of 
Lemma \ref{absch-moment-function-1-a}(2) that for all $t\in C_q^A$ and $k\in K$,
$$0\le 2t_1- \ln\Delta_{1}(k^*e^{2\underline t}k) \le \ln M(k)$$
 with $M(k)\le\infty$ as
defined there for $r=1$. This leads to
\begin{align}
| (\ln\Delta_{1}(k^*e^{2\underline t}k))^2-4t_1^2| &=
(2t_1- \ln\Delta_{1}(k^*e^{2\underline t}k))\cdot |\ln\Delta_{1}(k^*e^{2\underline t}k) +2t_1|
\notag\\
&\le (2t_1- \ln\Delta_{1}(k^*e^{2\underline t}k) )\cdot (4|t_1|+\ln M(k))
\notag\\
&\le4|t_1| (2t_1-\ln\Delta_{1}(k^*e^{2\underline t}k) ) +(\ln M(k))^2.
\notag\end{align}
Thus,
$$\Bigl|\int_K (\ln\Delta_{1}(k^*e^{2\underline t}k))^2\> dk    -4t_1^2\Bigr|\le
4|t_1|\Bigl(2t_1-\int_K \ln\Delta_{1}(k^*e^{2\underline t}k)\> dk \Bigr) 
+\int_K(\ln M(k))^2\> dk.$$
As 
$$2t_1-\int_K \ln\Delta_{1}(k^*e^{2\underline t}k)\> dk $$
remains bounded for $t\in C_q^A$ by Lemma \ref{absch-moment-function-1-a},
 and as  $\int_K(\ln M(k))^2\> dk$ is finite as  a
 consequence  of Lemma \ref{int-finite} by the same arguments as in the end of
 the proof
 of Lemma \ref{absch-moment-function-1-a}, we see that for $t\in C_q^A$,
$$\Bigl|\int_K (\ln\Delta_{1}(k^*e^{2\underline t}k))^2\> dk -4t_1^2\Bigr|\le
 C(|t_1|+1)$$
which proves the first inequality of Proposition \ref{2.moment-function-a} (5).
For the proof of the second inequality, we again use the proof of 
Lemma \ref{absch-moment-function-1-a}(2) now for $r=q-1$. This and 
Lemma \ref{absch-moment-function-1-a}(1) lead to
$$0\le 
\ln\left(\frac{\Delta_{q}(k^*e^{2\underline
    t}k)}{\Delta_{q-1}(k^*e^{2\underline t}k)}
\right)
-t_q\le M(k)\le\infty$$
for $k\in K$, $t\in C_q^A$. This implies the second 
 inequality of Proposition \ref{2.moment-function-a} (5) in the same way as in
 the preceding case.
 \end{proof}

We finally turn to the proof of Proposition \ref{2.moment-function-a-pos-def}
which is closely related to  Proposition \ref{2.moment-function-a}.

\begin{proof}[Proof of Proposition \ref{2.moment-function-a-pos-def}]
 Let $\nu\in M^1(C_q^A)$ with finite second moments. Consider a 
 row vector $a=(a_1,\ldots,a_q)\in\mathbb R^q\setminus\{0\}$ 
as well as the continuous functions
$$f_1(k,t):=\ln\Delta_1(k^*e^{2\underline t}k)\quad \text{and}
\quad
f_l(k,t):=\ln\Delta_l(k^* e^{2\underline t}  k)-\ln\Delta_{l-1}(k^*
e^{2\underline t}    k)
\quad(l=2,\cdots,q)$$
on $K\times C_q^A$. Then, by the definition of $\Sigma^2(\nu)$, 
(\ref{moment-function-a}), (\ref{m1-vector}),  (\ref{m2-matrix}),
 and the Cauchy-Schwarz inequality,
\begin{align}\label{CSU-2}
a\Sigma^2(\nu)a^t&= a\left( m_{\bf 2}(\nu)-m_{\bf 1}(\nu)^t m_{\bf 1}(\nu)\right)a^t
\notag\\
&=\int_{C_q^A} \int_K \left(\sum_{l=1}^q a_l f_l(k,t)\right)^2\> dk\> d\nu(t) 
-\left(\int_{C_q^A} \int_K \sum_{l=1}^q a_l f_l(k,t)\> dk\> d\nu(t)\right)^2
\ge0
\end{align}
where equality holds if and only if the continuous function
$$h:(k,t)\mapsto \sum_{l=1}^q a_l f_l(k,t)= (a_1-a_2)\ln\Delta_1(k^*e^{2\underline t} k)+\dots+ (a_{q-1}-a_q)\ln\Delta_{q-1}(k^*e^{2\underline t} k)
+a_q\ln\Delta_q(k^*e^{2\underline t}k)$$
is constant on $K\times C_q^A$ $\nu\otimes \omega_K$-almost surely with the
uniform distribution $\omega_K$ on $K$. This just means that $h$ is constant
on $supp\>(\nu\otimes \omega_K)= (supp\>\nu)\times K$. 

Assume now that $\nu$ satisfies the conditions of part (1) of the proposition,
i.e., that 
$supp\>\nu\not\subset D_q:= \{c\cdot (1,\ldots,1): \> c\in\mathbb R\}\subset
C_q^A$, and that the orthogonal projection $\tau(\nu)\in M^1(D_q)$ of $\nu$
from $C_q^A$ onto $D_q$ is no point measure. Now choose $t\in
supp\>\nu\setminus D_s$. As $h(t,.)$ is constant on $K$, we conclude from the
proof of Proposition \ref{2.moment-function-a}(3) that
$a_1=a_2=\ldots=a_q$. Therefore,
 $h(k,t)=a_1\cdot \ln\Delta_{q}(k^*e^{2\underline t} k)= a_1(t_1+\ldots+t_q)$
is independent of $t$ for $t\in supp\>\tau(\nu)$ which leads to $a_1=0$.
This shows that under the conditions of part (3), $\Sigma^2(\nu)$ has full
rank as claimed.

Parts (2) and (3) also follow by the same arguments and those of
 Proposition \ref{2.moment-function-a}.
 \end{proof}

\section{Oscillatory behavior of hypergeometric functions of type $BC$ at the identity}

In this section we prove Propositions \ref{1.moment-function-bc},
\ref{2.moment-function-bc}, and  \ref{2.moment-function-bc-pos-def}
 about the moment functions on the Weyl chamber $C_q^B$.
The proofs are related to those for the A-case in Section 5.

We again start with  the
 oscillatory behavior of hypergeometric functions $\phi_\lambda^p$ 
of type B at the identity for $p> 2q-1$. For this we recall and modify 
two results
about principal minors and determinants from \cite{RV1}.
In our notation, Lemma 4.8 of \cite{RV1} is as follows:

\begin{lemma}\label{lemma-eins2} Let $t\in C_q^B$, $w\in B_q$, $u\in
  U(q,\mathbb F)$ and $r=1,\ldots,q$. Denote the ordered singular values of
  the $q\times q$-matrix $w$ by
 $1\ge \sigma_1(w)\ge\ldots\ge \sigma_q(w)\ge0$. Then
\[\frac{\Delta_r( g(t,u,w))}{\Delta_r(  g(t,u,0))}\in
 \big[(1-  \widetilde t\,\sigma_1(w))^{2r}, (1+  \widetilde t\,\sigma_1(w))^{2r}\big], \quad \text{with } \,
\widetilde t:=\min(t_1,1). \]
\end{lemma}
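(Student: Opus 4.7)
The plan is to factor $\cosh\underline t$ out of the matrix $\cosh\underline t+\sinh\underline t\cdot w$ and reduce the question to a Loewner sandwich for an operator close to the identity. Since $\cosh\underline t$ is an invertible positive diagonal matrix, I would first write
$$\cosh\underline t+\sinh\underline t\cdot w=\cosh\underline t\,(I_q+\tanh\underline t\cdot w),\qquad\tanh\underline t:=\text{diag}(\tanh t_1,\ldots,\tanh t_q),$$
and set $N(t,w):=(I_q+\tanh\underline t\cdot w)(I_q+\tanh\underline t\cdot w)^*$, so that
$$g(t,u,w)=u^*\cosh\underline t\cdot N(t,w)\cdot \cosh\underline t\,u\quad\text{and}\quad g(t,u,0)=u^*\cosh^2\underline t\,u.$$

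Next I would bound the operator norm of $\tanh\underline t\cdot w$. Submultiplicativity of $\|\cdot\|_{op}$ gives $\|\tanh\underline t\cdot w\|_{op}\le\tanh t_1\cdot\sigma_1(w)$, and the elementary inequality $\tanh t_1\le\min(t_1,1)=\widetilde t$ (which follows from $\tanh x\le x$ for $x\ge 0$ together with $\tanh x<1$) upgrades this to $\|\tanh\underline t\cdot w\|_{op}\le\widetilde t\,\sigma_1(w)$. Observe that $\widetilde t\,\sigma_1(w)\le 1$, since $\widetilde t\le 1$ and $\sigma_1(w)\le 1$ by $w\in B_q$. The triangle and reverse-triangle inequalities then yield $\|(I_q+\tanh\underline t\,w)^*v\|\in[1-\widetilde t\,\sigma_1(w),\,1+\widetilde t\,\sigma_1(w)]$ for every unit vector $v$, hence the sandwich
$$(1-\widetilde t\,\sigma_1(w))^{2}\,I_q\;\le\;N(t,w)\;\le\;(1+\widetilde t\,\sigma_1(w))^{2}\,I_q$$
in the Loewner order.

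From here I would conjugate first by the hermitian matrix $\cosh\underline t$ and then by $u$; both operations preserve the Loewner order, and so
$$(1-\widetilde t\,\sigma_1(w))^{2}\,g(t,u,0)\;\le\;g(t,u,w)\;\le\;(1+\widetilde t\,\sigma_1(w))^{2}\,g(t,u,0).$$
Restriction to the upper-left $r\times r$ block preserves the Loewner order on hermitian matrices, and on positive semidefinite $r\times r$ matrices the determinant is monotone. Combined with the homogeneity $\Delta_r(cM)=c^{r}\Delta_r(M)$ for $c\ge 0$, and the strict positivity $\Delta_r(g(t,u,0))>0$ that follows from $\cosh^2\underline t>0$, dividing by $\Delta_r(g(t,u,0))$ yields the asserted two-sided bound with exponent $2r$.

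I do not expect a serious obstacle; the proof is a short chain of Loewner-monotonicity steps once the factorization is in hand. The only arguably non-routine ingredient is the scalar inequality $\tanh t_1\le\min(t_1,1)$: it is what tightens the operator-norm estimate to the precise form $\widetilde t\,\sigma_1(w)$ appearing in the lemma, and it simultaneously guarantees $1-\widetilde t\,\sigma_1(w)\ge 0$, so that the sandwich remains meaningful after raising to the power $r$.
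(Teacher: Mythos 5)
Your argument is correct. Note that the paper does not actually prove this lemma; it simply imports it as Lemma~4.8 of \cite{RV1}, so there is no internal proof to compare against. Your chain of reasoning is sound and self-contained: the factorization $\cosh\underline t+\sinh\underline t\,w=\cosh\underline t\,(I_q+\tanh\underline t\,w)$ reduces everything to the Loewner sandwich $(1-\widetilde t\,\sigma_1(w))^2 I_q\le N(t,w)\le(1+\widetilde t\,\sigma_1(w))^2 I_q$, which follows from $\|\tanh\underline t\cdot w\|_{op}\le\tanh t_1\cdot\sigma_1(w)\le\widetilde t\,\sigma_1(w)\le 1$; conjugation by $\cosh\underline t\cdot u$ and restriction to the leading $r\times r$ block both preserve the Loewner order, and determinant monotonicity plus the homogeneity $\Delta_r(cM)=c^r\Delta_r(M)$ finish the proof. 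You correctly flag the two points that make the estimate tight and well-posed, namely $\tanh t_1\le\min(t_1,1)$ and $\widetilde t\,\sigma_1(w)\le 1$ (so that the lower bound is nonnegative before being raised to the $2r$-th power). The steps all carry over to $\mathbb F=\mathbb H$ with the Dieudonn\'e determinant, since conjugation-monotonicity, block-restriction, and determinant monotonicity on PSD matrices hold verbatim in that setting.
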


\begin{lemma}\label{lemma-eins4}
For each  $p> 2q-1$ there exists $\epsilon>0$ with
$$\int_{B_q} \Delta(I-w^*w)^{-\epsilon} dm_p(w) <\infty.$$
\end{lemma}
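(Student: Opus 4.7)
The plan is to absorb the extra factor $\Delta(I-w^*w)^{-\epsilon}$ into the density of $dm_p$ and recognise the resulting integrand as the unnormalised density of $dm_{p'}$ for a nearby admissible parameter $p'\in(2q-1,p)$. Unfolding the definition \eqref{probab-mp} gives
\begin{equation*}
\int_{B_q}\Delta(I-w^*w)^{-\epsilon}\,dm_p(w)\;=\;\frac{1}{\kappa_p}\int_{B_q}\Delta(I-w^*w)^{d(p/2+1/2-q)-1-\epsilon}\,dw,
\end{equation*}
so the task reduces to choosing $\epsilon>0$ so that the exponent on the right is still of the form $d(p'/2+1/2-q)-1$ for some admissible $p'$.

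Since $p>2q-1$, I would pick any $p'$ with $2q-1<p'<p$ and set $\epsilon:=d(p-p')/2>0$. A direct computation then yields exponent $d(p'/2+1/2-q)-1$, and by \eqref{probab-mp} applied to $p'$ (which is admissible because $p'>2q-1$) the resulting integral is precisely the finite normalisation constant $\kappa_{p'}$. Hence
\begin{equation*}
\int_{B_q}\Delta(I-w^*w)^{-\epsilon}\,dm_p(w)\;=\;\frac{\kappa_{p'}}{\kappa_p}\;<\;\infty,
\end{equation*}
completing the proof.

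There is no real obstacle here: the only substantive input is the finiteness of $\kappa_{p'}$ for every $p'>2q-1$, which is built into the very existence of the probability measure $dm_{p'}$ in \eqref{probab-mp}. The construction moreover shows that one may take $\epsilon$ as large as one wishes subject to $d(p-(2q-1))/2$, so the strict inequality $p>2q-1$ is exactly the margin that makes the argument work.
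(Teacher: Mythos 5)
Your argument is correct, and it takes a genuinely different (and shorter) route than the paper. The paper proves the lemma by performing a change of variables: it uses the diffeomorphism $P: B^q \to B_q$ from Lemma 3.7 and Corollary 3.8 of \cite{R1} (also Eq.~(\ref{trafo-P1})) to rewrite the integral as a product of one-dimensional integrals
$$\int_{B_q}\Delta(I-w^*w)^{-\epsilon}\,dm_p(w) = \frac{1}{\kappa}\int_{B^q}\prod_{j=1}^{q}(1-\|y_j\|_2^2)^{d(p-q-j+1)/2-1-\epsilon}\,dy_1\cdots dy_q,$$
from which finiteness is read off factor by factor under the condition $d(p-2q+1)/2 > \epsilon$. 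You instead absorb $\Delta(I-w^*w)^{-\epsilon}$ into a shift of the parameter, writing the integral as $\kappa_{p'}/\kappa_p$ with $p' = p - 2\epsilon/d$; finiteness of $\kappa_{p'}$ for $p' > 2q-1$ is then exactly what the paper already asserts (via \cite{R2}) in order for $dm_{p'}$ to be a probability measure in (\ref{probab-mp}). Both arguments yield the same admissible range $0 < \epsilon < d(p-2q+1)/2$. The trade-off: the paper's route is self-contained and simultaneously reproves the underlying finiteness of the normalization constant, whereas your route is shorter and conceptually cleaner but is parasitic on that fact having been established beforehand. This is legitimate here because the paper does take $\kappa_{p'}<\infty$ for all $p'>2q-1$ as given from \cite{R2}; if that were not already in hand, your argument would be circular, since finiteness of $\kappa_{p'}$ is precisely the $\epsilon=0$ instance of the lemma.
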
 

\begin{proof} The proof is similar to that of Lemma 4.10 in \cite{RV1}. We
  consider the ball 
$$B:=\{y\in\mathbb F^q:\> \|y\|_2 <1\}$$
and the diffeomorphism 
\begin{equation}\label{trafo-P1}
 P: B^q\longrightarrow B_q, \quad (y_1, \ldots, y_q) \longmapsto
 \begin{pmatrix}y_1\\y_2(I_q-y_1^*y_1)^{1/2}\\ 
\vdots\\
  y_q(I_q-y_{q-1}^*y_{q-1})^{1/2}\cdots (I_q-y_{1}^*y_{1})^{1/2}\end{pmatrix};
\end{equation}
see Lemma  3.7 and Corollary 3.8 of \cite{R1} and Remark 2.6 of \cite{RV1}.
Using the transformation formula, 
these results also ensure that for some constant $\kappa>0$, 
$$\int_{B_q}\Delta(I-w^*w)^{-\epsilon} dm_p(w) =\frac{1}{\kappa}
\int_{B^q}\prod_{j=1}^{q}
(1-\|y_j\|_2^2)^ { d(p-q-j+1)/2-1 -\epsilon}
dy_1\ldots dy_q.$$
The second integral is clearly finite for $d(p-2q+1)/2-\epsilon>0$, i.e., for
$p-2q+1>2\epsilon/d$. This implies the claim.
\end{proof}

\begin{proof}[Proof of Proposition \ref{1.moment-function-bc}(3)]
Let $p> 2q-1$,  $\lambda\in \mathbb R^q$, and  $t\in C_q^B$.
We use the integral representations (\ref{phi-int-kurz-bc}) and
 (\ref{def-m1-bc})
 for the spherical functions and the associated moment functions $m_{\bf 1}$
and study 
\begin{align}\label{diff1}
R:=& R(\lambda,t):=
|\phi_{-i\rho-\lambda}^p(t)- e^{i\langle \lambda, m_{\bf 1}(t)\rangle}|
\\
=& \biggl|  
\int_{B_q} \int_{U(q,\mathbb F)} exp\Bigl(\frac{i}{2} 
\sum_{r=1}^q (\lambda_r-\lambda_{r+1})\ln\Delta_{r}( g(t,u,w))\Bigr)
\> du\> dm_p(w)
\notag \\
&\quad\quad
- exp\Bigl(\frac{i}{2}\int_{B_q} \int_{U(q,\mathbb F)} 
\sum_{r=1}^q (\lambda_r-\lambda_{r+1})\ln\Delta_{r}( g(t,u,w))
\> du\> dm_p(w) \Bigr)
\biggr|
\notag
 \end{align}
with the convention $\lambda_{q+1}=0$. 
We  use the homogeneous polynomials $C_r$ from (\ref{C-r}) for
$r=1,\ldots,q$ 
and write the logarithms of the principal minors in (\ref{diff1}) as 
\begin{equation}
\ln\Delta_r( g(t,u,w))=\ln C_r(\cosh^2 t_1,\ldots,\cosh^2 t_r) +\ln H_r(t,u,w) \end{equation}
with
\begin{equation}\label{hr-def}
H_r(t,u,w):=\frac{\Delta_r( g(t,u,w))}{ C_r(\cosh^2 t_1,\ldots,\cosh^2 t_r   )}.
 \end{equation}
Hence
\begin{align}\label{diff2}
R=&\biggl|\int_{B_q} \int_{U(q,\mathbb F)}   exp\Bigl(\frac{i}{2} \sum_{r=1}^q (\lambda_r-\lambda_{r+1})\ln H_r(t,u,w) 
\Bigr)
\>  du\> dm_p(w)
\notag \\
&\quad\quad
- exp\Bigl(\frac{i}{2}\int_{B_q} \int_{U(q,\mathbb F)} 
\sum_{r=1}^q (\lambda_r-\lambda_{r+1})\ln H_r(t,u,w)  \>  du\> dm_p(w)\Bigr)
\biggr|.
\notag
 \end{align}
As before,  the power series for both exponential functions lead to 
$R\le R_1+R_2$
for
\begin{align}
R_1&:=   \int_{B_q} \int_{U(q,\mathbb F)}    \biggl| 
exp\left(  \frac{i}{2} \sum_{r=1}^q  (\lambda_r-\lambda_{r+1}) \ln H_r(t,u,w))\right)
\notag \\
&\quad\quad\quad\quad\quad
-\left(1+ \frac{i}{2}\sum_{r=1}^q (\lambda_r-\lambda_{r+1})\cdot\ln H_r(t,u,w)\right)\biggr|\>   du\> dm_p(w),
\notag
 \end{align}
\begin{align}
R_2&:=\biggl|exp\left( \frac{i}{2} 
 \int_{B_q} \int_{U(q,\mathbb F)}
\sum_{r=1}^q (\lambda_r-\lambda_{r+1})\cdot\ln( H_r(t,u,w)\> dk\right)
\>
\notag \\
&\quad\quad\quad -\> 1-\frac{i}{2}   \int_{B_q} \int_{U(q,\mathbb F)} \sum_{r=1}^q (\lambda_r-\lambda_{r+1})\cdot\ln H_r(t,u,w))\>
  du\> dm_p(w)\biggr|.
\notag
 \end{align}
We now use
$|e^{ix}-(1+ix)|\le x^2$ for $x\in\mathbb R$ again and define
$$A_m:=\int_{B_q} \int_{U(q,\mathbb F)}
\biggl| \sum_{r=1}^q
(\lambda_r-\lambda_{r+1})\cdot\ln H_r(t,u,w)\biggr|^m\>  du\> dm_p(w)
\quad\quad(m=1,2).$$
Hence, by  Jensen's inequality,
$$R\le R_1+R_2\le A_2 +A_1^2\le 2A_2.$$
As
$$A_2\le \|\lambda\|^2 \cdot const.\cdot
 \int_{B_q} \int_{U(q,\mathbb F)}\sum_{r=1}^q |\ln H_r(t,u,w)|^2\>dk =:\|\lambda\|^2 \cdot B_2,$$
we obtain
$R\le B_2\cdot 2 \|\lambda\|^2 $.
To complete the proof, we  check that $B_2$, i.e., 
 the integrals
\begin{equation}\label{I-m}
L_{r}:= \int_{B_q} \int_{U(q,\mathbb F)} |\ln H_r(t,u,w)|^2\> du\> dm_p(w)
\end{equation}
remain bounded independent of 
$t$ for $r=1,\ldots,q$.
For this fix  $r$ and recall that by (\ref{hr-def}),
$$\ln H_r(t,u,w) =\ln\Delta_r( g(t,u,w)) - \ln  C_r(\cosh^2 t_1,\ldots,\cosh^2 t_r   ).$$
Moreover, by Lemma \ref{lemma-eins2} 
$$\ln\Delta_r( g(t,u,w)) -\ln\Delta_r( g(t,u,0))\in
 2r[\ln(1-  \sigma_1(w)), \ln(1+ \sigma_1(w))].$$
Thus,
\begin{align}\label{hr-absch}
|\ln H_r(t,u,w)|^2 &\le
 2\left|\ln\left(\frac{\Delta_r( g(t,u,0))}{ C_r(\cosh^2 t_1,\ldots,\cosh^2 t_r
   )}\right)\right|^2 \\&\quad\quad +8r^2(|\ln(1+ \sigma_1(w))|^2 +|\ln(1- \sigma_1(w))|^2).
\notag\end{align}
Moreover, by the definition of $B_q$,
\begin{equation}\label{hilf-61}
 \int_{B_q} \int_{U(q,\mathbb F)}|\ln(1+ \sigma_1(w))|^2 \> du\> dm_p(w)\le
  (\ln 2)^2.
\end{equation}
To handle the more critical term $|\ln(1- \sigma_1(w))|^2$, we use the 
elementary fact that for all $\epsilon>0$ and $x\in ]0,1[$,
 $|\ln x|\le x^{-\epsilon}$. As for $w\in B_q$,
 $1\ge\sigma_1(w)\ge\ldots\ge\sigma_q(w)\ge0$, we get
\begin{equation}\label{est-sigma1-delta}
\frac{1}{1-\sigma_1(w)}\le \frac{2}{1-\sigma_1(w)^2}\le \,2 \prod_{r=1}^q
\frac{1}{1-\sigma_r(w)^2}
= \frac{2}{\Delta(I-w^*w)}.
\end{equation}
We thus obtain that for all  $\epsilon>0$,
$$|\ln(1- \sigma_1(w))|^2 \le (1-\sigma_1(w))^{-2\epsilon}\le 2^{2\epsilon}\cdot
\Delta(I-w^*w)^{-2\epsilon}.$$
Thus, by Lemma \ref{lemma-eins4}, 
\begin{equation}\label{est-1-sigma-endlich}
 \int_{B_q} \int_{U(q,\mathbb F)}|\ln(1- \sigma_1(w))|^2 \> du\> dm_p(w)\le
const. \cdot\int_{B_q} 
 \Delta(I-w^*w)^{-2\epsilon}\, dm_p(w) <\infty.
\end{equation}
It is therefore sufficient to prove that
\begin{equation}
\int_{B_q} \int_{U(q,\mathbb F)}
\left|\ln\left(\frac{\Delta_r( g(t,u,0))}{ C_r(\cosh^2 t_1,\ldots,\cosh^2 t_r
   )}\right)\right|^2\> du\> dm_p(w)
\end{equation}
remains bounded independent of $t$. But this integral is equal to
\begin{equation}
\int_{U(q,\mathbb F)}\left|\ln\left(\frac{\Delta_r(u^* \cosh^2t u)}{ C_r(\cosh^2 t_1,\ldots,\cosh^2 t_r
   )}\right)\right|^2\> du,
\end{equation}
and this expression remains bounded independent of  $t$ by the proof of
Proposition \ref{1.moment-function-a}(3) in Section 5; 
see Eqs.~(\ref{log-prin-min}) and (\ref{I-ma})
 and the arguments after (\ref{I-ma}) there. This completes the proof.
\end{proof}

For the case $q=1$, Proposition \ref{1.moment-function-bc}(3)
 was proved in \cite{V2} by the same approach in the
 context of Jacobi functions; see also \cite{Z1}, \cite{Z2} for the context of
 Sturm-Liouville hypergroups. 

We now turn to the proof of the remaining parts of 
Proposition \ref{1.moment-function-bc}.
Part (1)  follows  from
 the LLN \ref{lln-bc}(1). Notice that
the proof of this LLN in Section 8 is independent from
 Proposition \ref{1.moment-function-bc}(1).
For the proof of part (2) we state the following result, which is 
related to estimates in the proof of Proposition
\ref{1.moment-function-bc}(1), and which reduces  estimates from the
BC-case to the A-case in Section 5.

\begin{lemma}\label{hilf-absch-moment-function-1-bc}
 For $r=1,\ldots,q$, $t\in C_q^B$, $u\in U(q,\mathbb F)$, and $w\in B_q$,
$$\bigl|\ln\Delta_r(g(t,u,w))  - \ln\Delta_r( u^*e^{2\underline t}u)\bigr|
\le
\ln 4+ 2r\cdot\max\bigl(|\ln(1-\sigma_1(w))|, \ln(1+\sigma_1(w))\bigr)$$
with
$$\int_{B_q}\max\bigl(|\ln(1-\sigma_1(w))|, \ln(1+\sigma_1(w))\bigr)\> dm_p(w)
<\infty.$$
\end{lemma}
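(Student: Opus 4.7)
The plan is to split the difference via the triangle inequality at the intermediate value $\ln\Delta_r(g(t,u,0))$. Since $g(t,u,0)=u^{*}\cosh^{2}\!\underline t\,u$, one writes
$$\ln\Delta_r(g(t,u,w))-\ln\Delta_r(u^{*}e^{2\underline t}u)=\bigl[\ln\Delta_r(g(t,u,w))-\ln\Delta_r(g(t,u,0))\bigr]+\bigl[\ln\Delta_r(u^{*}\cosh^{2}\!\underline t\,u)-\ln\Delta_r(u^{*}e^{2\underline t}u)\bigr],$$
and estimates the two brackets separately.

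For the first bracket I would simply invoke Lemma \ref{lemma-eins2}, exploiting $\widetilde t=\min(t_1,1)\le 1$ to replace $\widetilde t\sigma_1(w)$ by $\sigma_1(w)$ in both the upper and lower ends of the interval. Taking logarithms then yields a bound of exactly $2r\cdot\max\bigl(|\ln(1-\sigma_1(w))|,\ln(1+\sigma_1(w))\bigr)$, reproducing the second summand in the stated estimate. For the second bracket I would use the elementary pointwise inequalities $e^{2t_i}/4\le \cosh^{2}t_i\le e^{2t_i}$ (valid on $t_i\ge 0$) together with Lemma \ref{pos-coeff}: both principal minors expand as $\sum_{|I|=r}c_I(u)\prod_{i\in I}a_i$ with the \emph{same} nonnegative coefficients $c_I(u)$, once with $a_i=\cosh^{2}t_i$ and once with $a_i=e^{2t_i}$. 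A term-by-term comparison forces the ratio into $[4^{-r},1]$, so this bracket is controlled by a constant (of order $r\ln 4$, absorbed into the $\ln 4$ displayed in the lemma).

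For the integrability assertion only $|\ln(1-\sigma_1(w))|$ is non-trivial, since $\ln(1+\sigma_1(w))\le\ln 2$ on $B_q$. I would pass from $1-\sigma_1(w)$ to the determinant using the chain (\ref{est-sigma1-delta}), namely $1-\sigma_1(w)\ge \tfrac12(1-\sigma_1(w)^{2})\ge \tfrac12\Delta(I-w^{*}w)$, which gives $|\ln(1-\sigma_1(w))|\le \ln 2+|\ln\Delta(I-w^{*}w)|$. Lemma \ref{ln-est} with $m=1$ then converts $|\ln\Delta(I-w^{*}w)|$ into a bound of the form $C(1+\Delta(I-w^{*}w)^{-\epsilon})$ for any small $\epsilon>0$, and Lemma \ref{lemma-eins4} supplies an $\epsilon>0$ making the integral finite. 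The only step requiring genuine care is the per-term comparison in the second bracket, where one must observe that the coefficients $c_I(u)$ depend on $u$ alone and are therefore identical in both minor expansions; everything else is routine bookkeeping.
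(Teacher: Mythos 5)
Your proposal is correct and tracks the paper's proof closely: same decomposition via the intermediate value $\ln\Delta_r(g(t,u,0))$, same application of Lemma~\ref{lemma-eins2} with $\widetilde t\le 1$, and the same chain through $\Delta(I-w^*w)$ for the integrability claim. The one place where your argument diverges is the second bracket. The paper bounds it by observing the matrix order $\tfrac14\,u^*e^{2\underline t}u\le u^*(\cosh\underline t)^2 u\le u^*e^{2\underline t}u$ and invoking monotonicity of principal minors; you instead expand both minors via Lemma~\ref{pos-coeff} into $\sum_{|I|=r}c_I(u)\prod_{i\in I}a_i$ with the same coefficients and compare term by term. Both routes are legitimate and essentially equivalent; yours makes the $u$-independence of the comparison a bit more transparent, while the paper's is shorter.

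One remark worth making explicit: both your term-by-term comparison and the paper's matrix-order argument (since $\Delta_r(\tfrac14 A)=4^{-r}\Delta_r(A)$) give a bound of $r\ln 4$ on the second bracket, not $\ln 4$ as displayed in the lemma statement. You acknowledge this by writing ``of order $r\ln 4$'' but describe it as ``absorbed into the $\ln 4$''; it would be cleaner to state outright that the correct constant is $r\ln 4\le q\ln 4$, which is what the paper's proof in fact yields. This discrepancy is harmless for the rest of the paper, where only the existence of a uniform constant matters, but it is not an absorption.
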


\begin{proof} We conclude from Lemma \ref{lemma-eins2} that for
 $u\in U(q,\mathbb F)$ and $w\in B_q$,
$$\Delta_r(  g(t,u,0))(1- \sigma_1(w))^{2r}\le
\Delta_r( g(t,u,w))\le \Delta_r(  g(t,u,0))(1+ \sigma_1(w))^{2r}$$
and thus
\begin{equation}\label{help21}
\bigl|\ln\Delta_r(g(t,u,w))  - \ln\Delta_r( g(t,u,0))\bigr|
\le 2r\cdot\max\bigl(|\ln(1-\sigma_1(w))|, \ln(1+\sigma_1(w))\bigr).
\end{equation}
Moreover, as
$$\frac{1}{4 } u^*e^{2\underline t}u\le  u^* (\cosh\underline t)^2u\le
u^*e^{2\underline t}u,$$
we have
$$\left| \ln\Delta_r(g(t,u,0))-  \ln\Delta_r( u^*e^{2\underline t}u)\right|\le
\ln 4$$
for $t\in C_q^B$, $u\in U(q,\mathbb F)$. In combination with (\ref{help21}),
this leads to the first estimation of the lemma.
For the second statement, we first observe that 
$\int_{B_q}\ln(1+\sigma_1(w))\> dm_p(w)$ is obviously finite.
Moreover, $\int_{B_q}|\ln(1-\sigma_1(w))|\> dm_p(w)$
 is also finite as a consequence of
(\ref{est-1-sigma-endlich}). 
\end{proof}

Proposition \ref{1.moment-function-bc}(2)
 is  now part (2) of the following result:

\begin{lemma}\label{absch-moment-function-1-bc}
\begin{enumerate}
\item[\rm{(1)}] For $r=1,\ldots,q$ let
$$s_r^{BC}(t):= m_{(1,0,\ldots,0)}(t)+\cdots+  m_{(0,\ldots,0, 1,0,\ldots,0)}(t)
 \quad\quad\text{for} \quad
t\in C_q^B$$
be the sum of the first $r$ moment functions of first order.
Then there is a constant $C=C(q)$
 such that for all $r=1,\ldots,q$ and $t\in C_q^B$,
$$| t_1+t_2+\cdots+ t_r \> -\> s_r^{BC}(t)| \le C.$$
\item[\rm{(2)}]  There is a constant $C=C(q)$ such that for all $t\in C_q^A$
 $$\left\| t - m_{\bf 1}(t)\right\|\le C.$$
\end{enumerate}
\end{lemma}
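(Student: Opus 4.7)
The strategy is to reduce the BC case to the already-established A case (Lemma \ref{absch-moment-function-1-a}) by exploiting Lemma \ref{hilf-absch-moment-function-1-bc}, which is designed exactly for this purpose.

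For part (1), I would start from the integral representation
$$s_r^{BC}(t) \,=\, \frac{1}{2}\int_{B_q}\int_{U(q,\mathbb F)} \ln\Delta_r(g(t,u,w))\,du\,dm_p(w),$$
which follows immediately from (\ref{def-m1-bc}) by summing the first $r$ first-order moment functions (the telescoping of the logs of quotients of principal minors collapses to $\ln\Delta_r$). I would then compare this with the A-type quantity
$$s_r^A(t) \,=\, \frac{1}{2}\int_{U(q,\mathbb F)} \ln\Delta_r(u^*e^{2\underline t}u)\,du$$
(using that $C_q^B\subset C_q^A$). By Lemma \ref{hilf-absch-moment-function-1-bc}, for every $t\in C_q^B$, $u\in U(q,\mathbb F)$, and $w\in B_q$,
$$\bigl|\ln\Delta_r(g(t,u,w))-\ln\Delta_r(u^*e^{2\underline t}u)\bigr|\le \ln 4 + 2r\cdot\max\bigl(|\ln(1-\sigma_1(w))|,\ln(1+\sigma_1(w))\bigr),$$
and the right-hand side has finite integral against $dm_p(w)$. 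Integrating yields a constant $C_1=C_1(p,q)$ with
$$|s_r^{BC}(t)-s_r^A(t)|\le C_1\quad\text{for all }t\in C_q^B,\ r=1,\ldots,q.$$

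Combining this with Lemma \ref{absch-moment-function-1-a}(2), which already gives $|t_1+\cdots+t_r - s_r^A(t)|\le C_2(q)$ for $t\in C_q^A\supset C_q^B$, I obtain
$$|t_1+\cdots+t_r - s_r^{BC}(t)|\le C_1+C_2,$$
proving (1). Part (2) is then a one-line consequence: the $\ell$-th coordinate of $m_{\bf 1}(t)$ is $s_\ell^{BC}(t)-s_{\ell-1}^{BC}(t)$ (with $s_0^{BC}:=0$), so by the triangle inequality applied to (1) with index $\ell$ and $\ell-1$,
$$|t_\ell - m_{\bf 1}(t)_\ell|\le 2(C_1+C_2),$$
and summing in $\ell=1,\ldots,q$ gives $\|t-m_{\bf 1}(t)\|\le C(p,q)$ as claimed.

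The only nontrivial point is the reduction step, and this is already prepared by Lemma \ref{hilf-absch-moment-function-1-bc}: the potentially dangerous contribution is the $\ln(1-\sigma_1(w))$ singularity as $\sigma_1(w)\to 1$, but it is tamed by the determinant estimate (\ref{est-sigma1-delta}) together with the integrability result Lemma \ref{lemma-eins4} used in the proof of that auxiliary lemma. Everything else is routine telescoping and an appeal to the analogous A-case estimate, so the bulk of the work has already been done in Section 5 and in the preparatory lemmas of Section 6.
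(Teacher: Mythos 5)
Your proposal is correct and follows essentially the same route as the paper: write $s_r^{BC}(t)$ via the integral representation (\ref{def-m1-bc}), compare it to the type-$A$ quantity $s_r(t)$ using Lemma \ref{hilf-absch-moment-function-1-bc}, and then invoke Lemma \ref{absch-moment-function-1-a}(2); part (2) follows by telescoping, which is what the paper also does (though it states this step without spelling it out). One minor observation: you correctly note that the constant depends on $p$ as well as $q$ (since $m_p$ enters the integral bound), matching the $C=C(p,q)$ of Proposition \ref{1.moment-function-bc}(2) rather than the $C=C(q)$ written in the lemma's statement.
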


\begin{proof}
Let $t\in C_q^B$.
By the integral representation  (\ref{def-m1-bc}) of the moment
functions, we have
\begin{equation}\label{s-r-q-bc}
s_r^{BC}(t)=\frac{1}{2} \int_{B_q} \int_{U(q,\mathbb F)} 
\ln\Delta_r(g(t,u,w))\>  du\> dm_p(w)
\quad\quad(r=1,\ldots,q).
\end{equation}
We thus obtain from Lemma \ref{hilf-absch-moment-function-1-bc} that
for all $t\in C_q^B$ and $r=1,\ldots,q$,
$$ \left| s_r^{BC}(t)- \frac{1}{2}\int_{U(q,\mathbb F)} \ln\Delta_r( u^*e^{2\underline t}u  )\>
du\right|       \le C$$
for some constant $C>0$.
Therefore, in the notation of Lemma  \ref{absch-moment-function-1-a},
$$ \left| s_r^{BC}(t)- s_r(t)\right|       \le C
\quad\quad(t\in C_q^B,\>\> r=1,\ldots,q).$$
 Lemma
 \ref{absch-moment-function-1-a}(2)
now implies that for all $t\in C_q^B$ and $r=1,\ldots,q$,
$$|s_r(t)- (t_1+\ldots t_r)|\le \tilde C$$
for some constant $\tilde C$. This proves  part (1).
 Part (2) is a consequence of part (1).
\end{proof}

\begin{remark}
We conjecture that in part (1) of the preceding lemma the stronger result
\begin{equation}\label{abschae-bc-unklar}
0\le t_1+\ldots t_r -s_r^{BC}(t)\le C \quad\quad (r=1,\ldots,q, t\in C_q^B)
\end{equation}
holds which would correspond to Lemma \ref{absch-moment-function-1-a}(2)
 in the A-case. 

In fact, this could be easily derived
from the attempting matrix inequality
$$(\cosh\underline t+\sinh \underline t\cdot w)(\cosh\underline t+\sinh
\underline t\cdot w)^*\le e^{2\underline t} \quad\quad (t\in C_q^B,  w\in B_q).$$
 Unfortunately, this matrix inequality is not correct. Take for instance
 $q=2$, $t=(t_1,0)$ with $t_1$ large, and 
$w=\begin{pmatrix} 0&1 \\1&0 \end{pmatrix}$. Therefore, a proof of
 (\ref{abschae-bc-unklar}) would be more involved than in the A-case above.
\end{remark}

We next turn to the proof of Proposition \ref{2.moment-function-bc}.

\begin{proof}[Proof of Proposition \ref{2.moment-function-bc}]
Fix $t\in C_q^B$. Consider a non-trivial row vector
 $a=(a_1,\ldots,a_q)\in\mathbb
R^q\setminus\{0\} $ and the continuous functions
$$f_1(u,w):=\ln\Delta_1(g(t,u,w))\quad \text{and}
\quad
f_l(u,w):=\ln\Delta_l(g(t,u,w))-\ln\Delta_{l-1}(g(t,u,w))
\quad(l=2,\cdots,q)$$
on $U(q,\mathbb F)\times B_q$. Then, by (\ref{def-m1-bc}), (\ref{m1-vector-bc}),  (\ref{m2-matrix-bc}), and the Cauchy-Schwarz inequality,
\begin{align}\label{CSU-bc}
a&\left( m_{\bf 2}(t)-m_{\bf 1}(t)^t m_{\bf 1}(t)\right)a^t\\
&= \int_{B_q}\int_{U(q,\mathbb F)} \left(\sum_{l=1}^q a_l f_l(u,w)\right)^2\>
du\> dm_p(w) 
-\left(\int_{B_q}\int_{U(q,\mathbb F)} \sum_{l=1}^q a_l f_l(u,w)\> du\> dm_p(w) \right)^2
\ge0.
\notag\end{align}
This shows part (1) of the proposition.

For the proof of (2), use that for
$t=0\in C_q^B$, $m_{\bf 1}(0)=0$ and $m_{\bf 2}(0)=0$ which yields 
$\Sigma^2(t)=0$.

For the proof of part (3) we take $t\in C_q^B$ with $t\ne0$ and 
notice that we have equality in (\ref{CSU-bc}) 
 if and only if the function 
\begin{align}
(u,w)\mapsto &\sum_{l=1}^q a_l f_l(u,w)\notag\\
&= (a_1-a_2)\ln\Delta_1(g(t,u,w))+\dots+ (a_{q-1}-a_q)\ln\Delta_{q-1}(g(t,u,w))
+a_q\ln\Delta_q(g(t,u,w))
\notag\end{align}
 is constant on $U(q,\mathbb F)\times B_q$. 
Assume now that this is the case.

We now first consider the case where $t\in C_q^B$ does not have the form
$t=c(1,\ldots,1)$ with some $c>0$. In this case we put
 $w=I_q\in B_q$ with
$g(t,u,I_q)=u^*e^{2\underline t}u$. Therefore, 
 $$u\mapsto\ln\Delta_q(u^*e^{2\underline t}u)$$
 is constant on $U(q,\mathbb F)$,
and by our assumption and Lemma \ref{lin-unab},
the functions $u\mapsto\ln\Delta_r(u^*e^{2\underline t}u)$ ($r=1,\ldots,q-1$)
 and the constant function $1$ are linearly
 independent on  $U(q,\mathbb F)$. Consequently, as the function 
 $$(u,w)\mapsto \sum_{l=1}^q a_l f_l(u,w)$$ is constant on
 $U(q,\mathbb F)\times B_q$, we have
 $a_1=a_2=\ldots=a_q$. On the other hand, 
$$\ln\Delta_q(g(t,u,w))=
\ln|\Delta(\cosh \underline t+\sinh \underline t \cdot w)|^2$$
 is not constant
in $w\in B_q$ for $t\ne0$, which proves  $a_q=0$. This shows that
 $\Sigma^2(t)$ is positive definite for  $t\in B_q$ not having the form $ c(1,\ldots,1)$.
Finally, if $t$ has the form $t=c(1,\ldots,1)$ with some $c>0$, we may choose
$w=\begin{pmatrix} 1&0\\ 0&0 \end{pmatrix}\in B_q$ (with $1\in\mathbb R$).
Then  $g(t,u,w)=u^*D(t)u$ with some diagonal matrix $D(t)$ where not all
diagonal entries are equal. As above, Lemms \ref{lin-unab}
 yields in this case that  $a_1=a_2=\ldots=a_q$ and the proof can be completed
 in the same way as in the preceding case.

We next turn to part (4) of the proposition.
We recall that Lemma \ref{pos-coeff} implies
$$2jt_q\le \ln\Delta_{j}(u^*e^{2\underline t}u)\le 2jt_1$$
 for $u\in U(q,\mathbb F)$, $t\in
C_q^B$, and $j=1,\ldots,q$. Therefore, by the integral representation 
 (\ref{def-m1-bc}) of the moment functions and by 
Lemma \ref{hilf-absch-moment-function-1-bc}
\begin{align}|m_{j,l}(t)|\le& 
\frac{1}{4}\int_{B_q} \int_{U(q,\mathbb F)} 
\Bigl|\ln\Delta_{j}(g(t,u,w))-\ln\Delta_{j-1}(g(t,u,w))
\Bigr|
\cdot\notag\\
&\quad\quad\quad\quad\quad\cdot
\Bigl|\ln\Delta_{l}(g(t,u,w))-\ln\Delta_{l-1}(g(t,u,w))
\Bigr|\> du\> dm_p(w)\notag\\
\le& 
C+\frac{1}{4}\int_{U(q,\mathbb F)} \Bigl|\ln\Delta_{j}(u^*e^{2\underline t}u)-
\ln\Delta_{j-1}(u^*e^{2\underline t}u)\Bigr|\cdot
\Bigl|\ln\Delta_{l}(u^*e^{2\underline t}u)-
\ln\Delta_{l-1}(u^*e^{2\underline t}u)\Bigr|\> du 
\notag\\
\le& \> C(1+t_1)
\notag\end{align}
for $j,l=1,\ldots, q$, $t\in C_q^B$, and some constant $C>0$.
On the other hand, by the definition of $g(t,u,w)$, the functions 
 $\ln \Delta_{l}(g(t,u,w))$
are analytic at $t=0$ with $\ln \Delta_{l}(g(0,u,w))=0$. Therefore, 
$m_{j,l}(t)=O(t_1^2)$ for small $t\in C_q^B$. We thus obtain that 
$|m_{j,l}(t)|\le t_1^2$ for all $t\in C_q^B$ and $j,l$ with  some constant
$C>0$ as claimed in part (4).

For the proof of part (5),
we recall from the proof of 
Lemma \ref{absch-moment-function-1-bc}(2) above that for all $t\in C_q^B$, $u\in
U(q,\mathbb F)$, and $w\in B_q$,
$$ |2t_1- \ln\Delta_{1}(g(t,u,w))| \le \ln M(u,w)$$
 with 
some expression $M(u,w)\le\infty$ satisfying 
$\int_{U(q,\mathbb F)}\int_{B_q}  \ln M(u,w)\> du\> dm_p(w)<\infty$.
 This leads to
\begin{align}
| (\ln\Delta_{1}(g(t,u,w)))^2-4t_1^2| &=
(2t_1- \ln\Delta_{1}(g(t,u,w)))\cdot |\ln\Delta_{1}(g(t,u,w)) +2t_1|
\notag\\
&\le (2t_1- \ln\Delta_{1}(g(t,u,w)) )\cdot (4|t_1|+\ln M(u,w))
\notag\\
&\le4|t_1| (2t_1-\ln\Delta_{1}(g(t,u,w)) ) +(\ln M(u,w))^2.
\notag\end{align}
Therefore,
\begin{align}
\Bigl|\int_{U(q,\mathbb F)}&\int_{B_q} (\ln\Delta_{1}(g(t,u,w)))^2\> du\> dm_p(w)  
  -4t_1^2\Bigr|\le\notag\\
&\le4|t_1|\Bigl(2t_1-\int_{U(q,\mathbb F)}\int_{B_q}
 \ln\Delta_{1}(g(t,u,w))\> du\> dm_p(w) \Bigr) 
+\int_{U(q,\mathbb F)}\int_{B_q}(\ln M(u,w))^2\> du\> dm_p(w).
\notag\end{align}
Therefore,
as 
$$\Bigl(2t_1-\int_{U(q,\mathbb F)}\int_{B_q} \ln\Delta_{1}(g(t,u,w))\>
 du\> dm_p(w)  \Bigr)$$
remains bounded for $t\in C_q^B$ by Lemma \ref{absch-moment-function-1-bc},
 and as also  
$$\int_{U(q,\mathbb F)}\int_{B_q} (\ln M(u,w))^2\>  du\> dm_p(w)<\infty$$
  by the  arguments  of
 the proof
 of Lemma \ref{absch-moment-function-1-a}, we conclude that for $t\in C_q^B$,
$$\Bigl|\int_{U(q,\mathbb F)}\int_{B_q} 
 (\ln\Delta_{1}(g(t,u,w)))^2\>   du\> dm_p(w) -4t_1^2\Bigr|\le
 C(|t_1|+1)$$
as claimed in Proposition \ref{2.moment-function-bc}(5).
\end{proof}

We finally turn to the proof of Proposition \ref{2.moment-function-bc-pos-def}
which is closely related to  Proposition \ref{2.moment-function-bc}(3).

\begin{proof}[Proof of Proposition \ref{2.moment-function-bc-pos-def}]
 Let $\nu\in M^1(C_q^B)$ with finite second moments and $\nu\ne\delta_0$.
 Consider a 
 row vector $a=(a_1,\ldots,a_q)\in\mathbb R^q\setminus\{0\}$ 
as well as the continuous functions
$$f_1(u,w,t):=\ln\Delta_1(g(t,u,w))\quad \text{and}
\quad
f_l(k,t):=\ln\Delta_l(g(t,u,w))-\ln\Delta_{l-1}(g(t,u,w))
\quad(l=2,\cdots,q)$$
on $U(q,\mathbb F)\times B_q\times C_q^B$.
 By the definition of $\Sigma^2(\nu)$, 
 (\ref{def-m1-bc}),  (\ref{m1-vector-bc}), (\ref{m2-matrix-bc}),
 and the Cauchy-Schwarz inequality,
\begin{align}\label{CSU-2-bc}
a\Sigma^2(\nu)a^t&= a\left( m_{\bf 2}(\nu)-m_{\bf 1}(\nu)^t m_{\bf 1}(\nu)\right)a^t
\notag\\
&=\int_{C_q^B} \int_{U(q,\mathbb F)}\int_{B_q}  \left(\sum_{l=1}^q a_l
f_l(k,t)\right)^2\> dm_p(w) \> du\> d\nu(t) \notag\\
&\quad\quad\quad-\left(\int_{C_q^B} \int_{U(q,\mathbb F)}\int_{B_q}
 \sum_{l=1}^q a_l f_l(k,t)\> dm_p(w) \> du\> d\nu(t)\right)^2
\quad \ge\quad 0
\end{align}
where equality holds if and only if the continuous function
$$h:(u,w,t)\mapsto \sum_{l=1}^q a_l f_l(u,w,t)$$
is constant on $K\times C_q^A$ almost surely
 w.r.t.~$ \omega_{U(q,\mathbb F)}\times m_p\times \nu$-almost surely. This however, is not the case by the
  proof of Proposition \ref{2.moment-function-bc}(3).
\end{proof}

\section{Proof of the stochastic limit theorems in the case A}

In this section we prove the strong law of large numbers \ref{lln-a} and the CLT
\ref{clt-a}  for $K:=U(q,\mathbb F)$-biinvariant random walks 
$(S_k)_{k\ge0}$ on $G:=GL(q,\mathbb F)$ 
associated with some
$\nu\in M^1(C_q^A)$: 

We first turn to the CLT \ref{clt-a}. Besides the results of Section 5 we need
the  following estimate which follows immediately from the
 integral representation (\ref{int-rep-a}) for the functions $\phi_\lambda^A$.

\begin{lemma}\label{absch-abl-a}
For all $t\in C_q^A$, $\lambda\in \mathbb R^q$, and  $l\in\mathbb N_0^q$,
$$\left|\frac{\partial^{|l|}}{\partial\lambda^l} 
\phi_{-i\rho-\lambda}^A(t)\right|\le m_l(t).$$
\end{lemma}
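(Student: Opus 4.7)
The plan is to prove the inequality by straightforward differentiation under the integral sign in the Harish-Chandra representation (\ref{int-rep-a}), exploiting that for real $\lambda$ the integrand has modulus one.

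First I would substitute $\lambda\mapsto -i\rho-\lambda$ in (\ref{int-rep-a}): the exponent of the power function becomes $(i(-i\rho-\lambda)-\rho)/2=-i\lambda/2$, so
$$\phi_{-i\rho-\lambda}^A(t)=\int_{U(q,\mathbb F)} \Delta_{-i\lambda/2}\bigl(u^{-1}e^{2\underline t}u\bigr)\,du.$$
Using (\ref{power-function}) together with the convention $\Delta_0(x):=1$, and the fact that the matrix $x:=u^{-1}e^{2\underline t}u$ is positive definite so that all $\Delta_r(x)>0$, the integrand factors as
$$\Delta_{-i\lambda/2}(x)=\prod_{r=1}^q \exp\!\Bigl(-\tfrac{i\lambda_r}{2}\ln\tfrac{\Delta_r(x)}{\Delta_{r-1}(x)}\Bigr).$$

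Next I would differentiate this product $|l|$ times with respect to $\lambda$: each derivative $\partial/\partial\lambda_j$ brings down exactly one factor $-\tfrac{i}{2}\ln(\Delta_j(x)/\Delta_{j-1}(x))$, so that
$$\frac{\partial^{|l|}}{\partial\lambda^l}\Delta_{-i\lambda/2}(x)=\Bigl(-\tfrac{i}{2}\Bigr)^{|l|}\prod_{j=1}^q\Bigl(\ln\tfrac{\Delta_j(x)}{\Delta_{j-1}(x)}\Bigr)^{l_j}\cdot\Delta_{-i\lambda/2}(x).$$
The interchange of differentiation and integration is justified by dominated convergence: one needs the integrability over $U(q,\mathbb F)$ of the products $\prod_j|\ln(\Delta_j/\Delta_{j-1})|^{l_j}$, which reduces (by the elementary bound in Lemma \ref{ln-est}) to the integrability of negative powers of the principal minors $\Delta_r$, already established in Proposition \ref{int-finite} and used repeatedly in the proof of Lemma \ref{absch-moment-function-1-a}.

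Finally, for $\lambda\in\mathbb R^q$ each factor $\exp(-i\lambda_r/2\cdot\ln(\Delta_r/\Delta_{r-1}))$ has modulus one, so $|\Delta_{-i\lambda/2}(x)|=1$. Pulling the absolute value inside the integral and comparing with the explicit formula (\ref{moment-function-a}) yields
$$\Bigl|\frac{\partial^{|l|}}{\partial\lambda^l}\phi_{-i\rho-\lambda}^A(t)\Bigr|\le \frac{1}{2^{|l|}}\int_{U(q,\mathbb F)}\prod_{j=1}^q\Bigl|\ln\tfrac{\Delta_j(u^{-1}e^{2\underline t}u)}{\Delta_{j-1}(u^{-1}e^{2\underline t}u)}\Bigr|^{l_j}du=m_l(t),$$
as claimed. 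There is no genuine obstacle; the only bookkeeping point is the interchange of derivatives and integral, and this is taken care of by the uniform integrability estimates from Section~4.
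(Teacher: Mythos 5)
Your proof is correct and is exactly the ``immediate from the integral representation'' argument the paper has in mind (the paper gives no separate proof of this lemma). You correctly reduce $\Delta_{-i\lambda/2}$ to a product of unimodular factors $\exp\bigl(-\tfrac{i\lambda_r}{2}\ln(\Delta_r/\Delta_{r-1})\bigr)$, differentiate under the integral, and observe that the factor $\Delta_{-i\lambda/2}(u^{-1}e^{2\underline t}u)$ has modulus one for real $\lambda$; the justification of the interchange via the integrability established in Proposition~\ref{int-finite} and Lemma~\ref{ln-est} is the right one.

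One small remark is worth recording, and it concerns the paper's statement as much as yours. What the calculation actually yields is
$$\Bigl|\tfrac{\partial^{|l|}}{\partial\lambda^l}\phi_{-i\rho-\lambda}^A(t)\Bigr|\le \frac{1}{2^{|l|}}\int_{U(q,\mathbb F)}\prod_{j=1}^q\Bigl|\ln\tfrac{\Delta_j(u^{-1}e^{2\underline t}u)}{\Delta_{j-1}(u^{-1}e^{2\underline t}u)}\Bigr|^{l_j}\,du,$$
with absolute values inside the integrand, whereas $m_l(t)$ as defined in (\ref{moment-function-a}) carries no such absolute values. The two agree when every $l_j$ is even, but for multi-indices with an odd entry $m_l(t)$ can be negative (e.g.\ $m_{(1,0,\ldots,0)}(t)\le t_1$ by Lemma~\ref{absch-moment-function-1-a}, and $t_1$ may be negative on $C_q^A$), in which case the inequality as literally written cannot hold. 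The intended reading of the lemma — and the bound one actually needs in Lemma~\ref{differentiable-a} to dominate the derivatives — is the absolute-value version on the right-hand side of the display above, which is exactly what your argument produces. So the mathematical content is correct; you have simply inherited the paper's slight abuse of notation in the final identification with $m_l(t)$.
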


Let $m\in\mathbb N_0$. We
say that $\nu\in  M^1(C_q^A)$  admits finite  $m$-th modified moments if in the notation of
Section 2, 
$$m_{(m,0,\ldots,0)}, m_{(0,m,0,\ldots,0)},\ldots, m_{(0,\ldots,0,m)}\in L^1(C_q^A,\nu).$$
It follows  from the integral representation 
(\ref{moment-function-a}) of the moment function and H\"older's inequality that
in this case all moment functions of order at most $m$ are $\nu$-integrable.
Moreover, this moment condition implies a corresponding differentiability of
the spherical Fourier transform of  $\nu$:

\begin{lemma}\label{differentiable-a}
Let $m\in\mathbb N_0$ and $\nu\in M^1(C_q^A)$ 
 with finite $m$-th moments.
 Then the spherical Fourier transform
$$\tilde\nu:\mathbb R^q\to\mathbb C,\quad
 \lambda\mapsto\int_{C_q^A} \phi_{-i\rho-\lambda}^A(t)\> d\nu(t)$$
is $m$-times continuously partially differentiable,
 and for all $l\in\mathbb N_0^n$ with $|l|\le m$,
\begin{equation}\label{tauschallg-a}
\frac{\partial^{|l|}}{\partial\lambda^l}\tilde\nu(\lambda)
=\int_{C_q^A}\frac{\partial^{|l|}}{\partial\lambda^l} 
\phi_{-i\rho-\lambda}^A(t)\> d\nu(t).
\end{equation}
In particular,
\begin{equation}\label{tausch-a}
\frac{\partial^{|l|}}{\partial\lambda^l}\tilde\nu(0)=(-i)^{|l|} \int_{C_q^A}
 m_l(t)\> d\nu(t).
\end{equation}
\end{lemma}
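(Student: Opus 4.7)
The plan is to apply the standard theorem on differentiation under the integral sign, iterated $m$ times, with Lemma \ref{absch-abl-a} supplying a $\lambda$-uniform, $\nu$-integrable dominant at each stage. First I would verify the integrability claim already made in the text preceding the lemma: under the hypothesis that the ``diagonal'' functions $m_{(m,0,\ldots,0)},\ldots,m_{(0,\ldots,0,m)}$ lie in $L^{1}(\nu)$, every $m_l$ with $|l|\le m$ does as well. This follows from the integral representation (\ref{moment-function-a}) together with the generalized H\"older inequality: for each fixed $t$, writing the integrand over $K$ as $\prod_j\bigl|\ln(\Delta_j/\Delta_{j-1})\bigr|^{l_j}$ and applying H\"older with exponents $p_j=m/l_j$ (and an $L^\infty$-factor on the empty slots when $|l|<m$) yields
\[
|m_l(t)|\;\le\;\frac{1}{2^{|l|}}\prod_{j}m_{(0,\ldots,m,\ldots,0)}(t)^{l_j/m},
\]
which is $\nu$-integrable by a further application of H\"older on $C_q^A$.

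Next I would prove (\ref{tauschallg-a}) by induction on $|l|$. The base case $|l|=0$ is the definition of $\tilde\nu$. For the inductive step, assuming the formula at order $|l|-1$, I want to pass $\partial/\partial\lambda_j$ inside the integral over $C_q^A$. The hypothesis for the standard differentiation-under-the-integral theorem requires: (i) the integrand is partially differentiable in $\lambda$ (true, since $\phi_{-i\rho-\lambda}^A(t)$ is entire in $\lambda$ by (\ref{int-rep-a})), and (ii) a $\nu$-integrable dominant, uniform in $\lambda$ on compact subsets of $\mathbb{R}^q$, for the partial derivative $\partial^{|l|}/\partial\lambda^l\,\phi_{-i\rho-\lambda}^A(t)$. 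Here Lemma \ref{absch-abl-a} provides exactly such a dominant, namely $m_l(t)$ itself, and $m_l\in L^1(\nu)$ by the preceding paragraph. Continuity of the resulting derivative in $\lambda$ follows from dominated convergence using the same dominant.

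It remains to deduce (\ref{tausch-a}) from (\ref{tauschallg-a}). Setting $\lambda=0$ and comparing $\phi_{-i\rho-\lambda}^A$ with the function $\mu\mapsto\phi_{-i\rho-i\mu}^A$ via the substitution $\lambda=i\mu$, the chain rule yields
\[
\frac{\partial^{|l|}}{\partial\lambda^l}\,\phi_{-i\rho-\lambda}^A(t)\Bigr|_{\lambda=0}
=(-i)^{|l|}\,\frac{\partial^{|l|}}{\partial\mu^l}\,\phi_{-i\rho-i\mu}^A(t)\Bigr|_{\mu=0}
=(-i)^{|l|}m_l(t),
\]
by the very definition of $m_l$ in (\ref{moment-function-a}). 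Integrating this identity against $\nu$ and combining with (\ref{tauschallg-a}) at $\lambda=0$ gives (\ref{tausch-a}).

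No step presents a genuine obstacle: the key analytic input, boundedness of each derivative of $\phi_{-i\rho-\lambda}^A$ by the corresponding moment function, is already packaged in Lemma \ref{absch-abl-a}, whose proof is the content of the integral representation (\ref{int-rep-a}) and the fact that $|\Delta_{-i\lambda/2}(x)|=1$ for $\lambda\in\mathbb{R}^q$. The only mildly delicate point is the combinatorial H\"older step used to reduce integrability of all mixed $m_l$ with $|l|\le m$ to the hypothesis on the pure-power moment functions; everything else is a routine application of dominated convergence.
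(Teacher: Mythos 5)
Your proof is correct and takes essentially the same route as the paper: an induction in the order of differentiation, with Lemma \ref{absch-abl-a} supplying the $\lambda$-uniform $\nu$-integrable dominant and the H\"older argument (already noted in the paper's text preceding the lemma) reducing integrability of all $m_l$ with $|l|\le m$ to that of the pure-power moment functions. The only addition is the explicit chain-rule computation producing the factor $(-i)^{|l|}$ from the $\lambda\mapsto i\lambda$ reparametrization, which the paper leaves implicit when it cites (\ref{moment-function-a}).
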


\begin{proof} We proceed by induction: The case $m=0$ is trivial,
 and for $m\to m+1$
 we observe that by our assumption all moments of lower order
 exist, i.e., (\ref{tauschallg-a}) is available for
all $|l|\le m$. It  follows from Lemma \ref{absch-abl-a}
 and a well-known result about parameter integrals that
a further partial derivative and  integration can be interchanged.
 Finally, (\ref{tausch-a}) follows from
 (\ref{tauschallg-a}) and (\ref{moment-function-a}).
 Continuity of the derivatives
is also clear by Lemma \ref{absch-abl-a}.
\end{proof}

We now turn to the proof of the CLT:

 \begin{proof}[Proof of  Theorem \ref{clt-a}]
 Let $\nu\in M^1(C_q^A)$ be a 
 probability measure with finite second modified moments.
Let $(X_k)_{k\ge1}$ be i.i.d.~$G$-valued 
random variables with the associated $K$-biinvariant distribution 
$\nu_G\in M^1(G)$ and $S_k:=X_1\cdot X_2\cdots X_k$ as in Section 2.
We consider the canonical projection 
$$(\tilde S_k:=\ln\sigma_{sing}(S_k))_{k\ge0}$$
 of this random
walk from $G$ to $G//K\simeq C_q$ as in Section 2.

 Let $\lambda\in\mathbb R^q$.
As the functions $\phi_{-i\rho-\lambda}^A$ are bounded on $C_q^A$
 (by the integral representation (\ref{int-rep-a}))
and multiplicative w.r.t.~double coset convolutions of  measures on $C_q^A$,
 we have
$$E(\phi_{-i\rho-\lambda/\sqrt k}^A(\tilde S_k))=
 \int_{C_q^A} \phi_{-i\rho-\lambda/\sqrt k}^A(t)\> d\nu^{(k)}(t)=
\left(\int_{C_q^A} \phi_{-i\rho-\lambda/\sqrt k}^A(t)\> d\nu(t)\right)^k =
\tilde\nu(\lambda/\sqrt k)^k.$$
We now use Taylor's formula, Lemma \ref{differentiable-a}, and
$$m_{\bf 2}(\nu):=\int_G m_{\bf 2}(g)\> d\nu(g)=\Sigma^2(\nu)+m_{\bf 1}(\nu)^tm_{\bf 1}(\nu)$$
and obtain
\begin{align}
exp(&i\langle\lambda, m_{\bf 1}(\nu)\rangle\cdot  \sqrt k)\cdot
 E(\phi_{-i\rho-\lambda/\sqrt k}^A(\tilde S_k))\quad =\quad
\left(exp(i\langle\lambda, m_{\bf 1}(\nu)\rangle/ \sqrt k)\cdot
\tilde\nu(\lambda/\sqrt k)\right)^k
\\
=&\left(\left[1+\frac{i\langle \lambda, m_{\bf 1}(\nu)\rangle}{ \sqrt k}- 
\frac{\langle\lambda, m_{\bf 1}(\nu)\rangle^2}{ 2 k} +o(\frac{1}{k})\right]\cdot
\left[1-\frac{i\langle \lambda, m_{\bf 1}(\nu)\rangle}{ \sqrt k}-
\frac{\lambda m_{\bf 2}(\nu)\lambda^t}{ 2 k} +o(\frac{1}{k})\right]
\right)^k
\notag \\
=&\biggl(\left[1+\frac{i\langle  \lambda, m_{\bf 1}(\nu)\rangle}{ \sqrt k}- 
\frac{(\langle \lambda, m_{\bf 1}(\nu)\rangle^2}{ 2 k} +o(\frac{1}{k})\right]\cdot
\notag \\
&\quad\quad \times\left[1-\frac{i \langle\lambda, m_{\bf 1}(\nu)\rangle}{ \sqrt k}-
\frac{\lambda (\Sigma^2(\nu)+ m_{\bf 1}(\nu)^t m_{\bf 1}(\nu))\lambda^t}{ 2 k} +o(\frac{1}{k})\right]
\biggr)^k
\notag \\
=&\left( 1-\frac{\lambda \Sigma^2(\nu)\lambda^t}{ 2 k} +o(\frac{1}{k})\right)^k.
\notag
\end{align}
Therefore,
$$\lim_{k\to\infty} exp(i\langle\lambda, m_{\bf 1}(\nu)\rangle \sqrt k)\cdot
 E(\phi_{-i\rho-\lambda/\sqrt k}^A(\tilde S_k))
= exp(-\lambda  \Sigma^2(\nu)\lambda^t/ 2 ).$$
Moreover, by Proposition \ref{1.moment-function-a}(3)
$$\lim_{k\to\infty} E\left( \phi_{-i\rho-\lambda/\sqrt k}^A(\tilde S_k)-
exp(-i\langle\lambda, m_{\bf 1}(\tilde S_k)\rangle /\sqrt k)\right) =0.$$
Therefore,
$$\lim_{k\to\infty} exp(-i\langle \lambda, ( m_{\bf 1}(\tilde S_k)- k\cdot m_{\bf 1}(\nu) )\rangle /\sqrt k)=
 exp(-\lambda  \Sigma^2(\nu)\lambda^t/ 2 )$$
for all $\lambda\in\mathbb R^q$. Levy's continuity theorem for the classical $q$-dimensional
 Fourier transform now implies that
$( m_{\bf 1}(\tilde S_k)-k\cdot m_{\bf 1}(\nu))/\sqrt k$ tends in distribution to $N(0, \Sigma^2(\nu))$.
By the estimate of Lemma \ref{absch-moment-function-1-a}(3), this shows 
that $( \tilde S_k-k\cdot m_{\bf 1}(\nu))/\sqrt k$ tends in
distribution to $N(0, \Sigma^2(\nu))$ as claimed.
\end{proof}

The oscillatory behavior of the $\phi_\lambda^A$ in
Proposition \ref{1.moment-function-a}(3) can be used to derive 
 a  Berry-Esseen-type estimate with
 the order $ O(k^{-1/3})$ of convergence.
 As the details are technical and  similar to 
 the corresponding rank-one-case in Theorem 4.2 of \cite{V3}, we  omit it.
We also mention that 
Proposition \ref{1.moment-function-a}(3) can be  used to derive further CLTs
e.g. with stable distributions as limits or a Lindeberg-Feller CLT. The details of proof
then would be also very similar to the classical  sums of iid random
variables.

We next turn to the strong law of large numbers \ref{lln-a}.

 \begin{proof}[Proof of  Theorem \ref{lln-a}]
We first prove part (2) and consider some $\nu\in M^1(C_q^A)$ having second
moments. Let $\epsilon>1/2$. We employ the strong law of large numbers 7.3.21 in  \cite{BH}
for the random walk $(\tilde S_k)_{k\ge0}$ on the double coset hypergroup 
$G//K\simeq C_q^A$ with the constants $r_k:=k^{-2\epsilon}$ there which satisfy
$\sum_{k=1}^\infty r_k<\infty$. For all $l=1,\ldots,q$, we now apply this
result to one-dimensional  moment functions $m_{(0,\ldots,0,1,0,\ldots,0)}$
and $m_{(0,\ldots,0,2,0,\ldots,0)}$ of first and second order 
with the nontrivial entry in the position $l$. 
 The integral representation (\ref{moment-function-a}) and Jensen's inequality
 ensure that 
$$m_{(0,\ldots,0,1,0,\ldots,0)}^2\le m_{(0,\ldots,0,2,0,\ldots,0)}
 \quad\quad\text{on}\quad C_q^A,$$
i.e., condition (MF2) for Theorem  7.3.21 in  \cite{BH} holds. 
We conclude from this theorem that for all  $l=1,\ldots,q$ and vectors of the
form $(0,\ldots,0,1,0,\ldots,0)$ with $1$ at position $l$,
$$k^{-\epsilon}\cdot\bigl( m_{(0,\ldots,0,1,0,\ldots,0)}(\tilde S_k) - k\int_{C_q^A}
 m_{(0,\ldots,0,1,0,\ldots,0)}(t)\> d\nu(t)\bigr) $$
tends to $0$ a.s. for $k\to\infty$.
 In other words, $k^{-\epsilon}\cdot\bigl(m_{\bf 1}(\tilde S_k)
-k\cdot m_{\bf 1}(\nu))$ tends to $0$ a.s..
Proposition \ref{1.moment-function-a}(2) finally implies that
 $k^{-\epsilon}\cdot\bigl(\tilde S_k -k\cdot m_{\bf 1}(\nu))$ tends to $0$ a.s. as
claimed.

 Part (1) follows in the same way from Theorem 7.3.24 in  \cite{BH} with the
 constant $\lambda=1$  there.
\end{proof}

\section{Proof of the stochastic limit theorems in the case BC}

In this section we prove the LLN \ref{lln-bc}
 and the CLT \ref{clt-bc} in the BC-case. Based on the technical results of
 Section 6, the proofs are very similar to those in Section 7.
We therefore skip  details.

We first turn to the CLT \ref{clt-bc}. Besides  Section 6 we need
the  following  immediate consequence of the
 integral representation (\ref{phi-int-kurz-bc}) for  $\phi_\lambda^p$.

\begin{lemma}\label{absch-abl-bc}
For all $t\in C_q^B$, $\lambda\in\mathbb R^q$, and  $l\in\mathbb N_0^q$,
$$\left|\frac{\partial^{|l|}}{\partial\lambda^l} 
\phi_{-i\rho-\lambda}^p(t)\right|\le m_l(t).$$
\end{lemma}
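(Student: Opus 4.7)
The plan is to derive the bound directly from the Harish-Chandra integral representation of Proposition~\ref{int-rep-prop-bc} by differentiating under the integral and majorizing by absolute values. The calculation is essentially the same as the one given (implicitly) for Lemma~\ref{absch-abl-a}; only the integration domain changes from $U(q,\mathbb F)$ to $B_q\times U(q,\mathbb F)$.

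First, I would substitute $\lambda\mapsto -i\rho^{BC}-\lambda$ in Proposition~\ref{int-rep-prop-bc}. Since $\bigl(i(-i\rho^{BC}-\lambda)-\rho^{BC}\bigr)/2=-i\lambda/2$, this yields
\[
\phi_{-i\rho^{BC}-\lambda}^{p}(t)=\int_{B_q}\int_{U(q,\mathbb F)} \Delta_{-i\lambda/2}\bigl(g(t,u,w)\bigr)\,du\,dm_p(w).
\]
By the definition \eqref{power-function} of the power function, and with the convention $\Delta_0\equiv 1$, the integrand has the explicit form
\[
\Delta_{-i\lambda/2}(g)=\exp\Bigl(-\tfrac{i}{2}\sum_{j=1}^{q}\lambda_j\ln\tfrac{\Delta_j(g)}{\Delta_{j-1}(g)}\Bigr),
\]
which, crucially, has modulus one for every $\lambda\in\mathbb R^q$.

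Next, I would differentiate formally: each $\partial/\partial\lambda_j$ brings down a factor $-\tfrac{i}{2}\ln(\Delta_j(g)/\Delta_{j-1}(g))$ from the exponential, so that
\[
\frac{\partial^{|l|}}{\partial\lambda^l}\Delta_{-i\lambda/2}(g)=\Bigl(\tfrac{-i}{2}\Bigr)^{|l|}\prod_{j=1}^{q}\Bigl(\ln\tfrac{\Delta_j(g)}{\Delta_{j-1}(g)}\Bigr)^{l_j}\cdot\Delta_{-i\lambda/2}(g).
\]
Pulling absolute values inside the integral and using $|\Delta_{-i\lambda/2}(g)|=1$ together with $|(-i/2)^{|l|}|=2^{-|l|}$ gives
\[
\Bigl|\tfrac{\partial^{|l|}}{\partial\lambda^l}\phi_{-i\rho^{BC}-\lambda}^{p}(t)\Bigr|\le \frac{1}{2^{|l|}}\int_{B_q}\int_{U(q,\mathbb F)}\prod_{j=1}^{q}\Bigl|\ln\tfrac{\Delta_j(g(t,u,w))}{\Delta_{j-1}(g(t,u,w))}\Bigr|^{l_j}\,du\,dm_p(w),
\]
which, upon comparison with the integral representation \eqref{def-m1-bc} of $m_l(t)$, is exactly the bound claimed.

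The delicate point—the only one, really—is justifying the interchange of differentiation and integration. The integrand can become large only as $\sigma_1(w)\to 1$, i.e.\ on the boundary of $B_q$. However, Lemma~\ref{lemma-eins2} controls $|\ln(\Delta_j(g(t,u,w))/\Delta_j(g(t,u,0)))|$ in terms of $|\ln(1\pm\sigma_1(w))|$, and Lemma~\ref{lemma-eins4} (via the estimate $(1-\sigma_1(w))^{-1}\le 2\Delta(I-w^*w)^{-1}$ already used in the proof of Proposition~\ref{1.moment-function-bc}(3)) guarantees that any power of $|\ln(1-\sigma_1(w))|$ is $dm_p$-integrable on $B_q$. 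Together with the obvious boundedness of $\ln\Delta_j(g(t,u,0))$ for fixed $t$, this provides a $du\,dm_p(w)$-integrable majorant uniform for $\lambda$ in any compact set, so that standard dominated convergence legitimizes the termwise differentiations above.
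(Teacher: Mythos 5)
Your approach matches what the paper intends: the paper presents the lemma as an ``immediate consequence'' of the Harish-Chandra representation (\ref{phi-int-kurz-bc}), and you carry out precisely the differentiate-under-the-integral-and-majorize argument that this suggests, including the correct dominated-convergence justification via Lemmas \ref{lemma-eins2} and \ref{lemma-eins4} and the estimate $(1-\sigma_1(w))^{-1}\le 2\Delta(I-w^*w)^{-1}$. The algebra (substitution $\lambda\mapsto -i\rho^{BC}-\lambda$, rewriting the power function as an exponential of modulus one, bringing down the factors $-\tfrac{i}{2}\ln(\Delta_j/\Delta_{j-1})$) is sound.

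There is, however, a mismatch at the final step that you gloss over and that appears to be present in the paper's own statement as well. After taking absolute values you obtain
\[
\Bigl|\tfrac{\partial^{|l|}}{\partial\lambda^l}\phi_{-i\rho^{BC}-\lambda}^{p}(t)\Bigr|
\le \frac{1}{2^{|l|}}\int_{B_q}\int_{U(q,\mathbb F)}\prod_{j=1}^{q}\Bigl|\ln\frac{\Delta_j(g(t,u,w))}{\Delta_{j-1}(g(t,u,w))}\Bigr|^{l_j}\,du\,dm_p(w),
\]
with absolute values \emph{inside} the integral; but the integrand in the definition (\ref{def-m1-bc}) of $m_l(t)$ carries no absolute values. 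The two integrals coincide when every $l_j$ is even, but when some $l_j$ is odd the absolute-value integral can strictly exceed $m_l(t)$ (and $m_l(t)$ could in principle even be negative, e.g.\ because the factors $\ln(\Delta_j/\Delta_{j-1})$ change sign on the integration domain). So your derivation, as written, does \emph{not} establish the stated bound $|\partial^l\phi^p_{-i\rho-\lambda}(t)|\le m_l(t)$; it establishes the bound by the absolute-value integral. This is enough for everything the lemma is used for---in Lemma \ref{differentiable-bc} and the CLT one only needs a $\nu$-integrable dominating function, and by H\"older the absolute-value integral is controlled by the even-order pure moments $m_{(0,\dots,m,\dots,0)}$ assumed to be in $L^1(\nu)$---but the identification ``is exactly the bound claimed'' should be replaced by an explicit remark that the majorant is the absolute-value version of the integrand in (\ref{def-m1-bc}), not $m_l(t)$ itself.
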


Let $m\in\mathbb N_0$. We
say that  $\nu\in  M^1(C_q^A)$ admits finite  $m$-th modified moments if in the notation of
Section 3, 
$$m_{(m,0,\ldots,0)}, m_{(0,m,0,\ldots,0)},\ldots, m_{(0,\ldots,0,m)}\in L^1(C_q^B,\nu).$$
By the integral representation 
(\ref{def-m1-bc})  and by H\"older's inequality,
in this case all moment functions of order at most $m$ are $\nu$-integrable.
Moreover, this moment condition leads to a corresponding differentiability of
the spherical Fourier transform of  $\nu$ as in Lemma \ref{differentiable-a}.
We omit the proof:

\begin{lemma}\label{differentiable-bc}
Let $m\in\mathbb N_0$ and $\nu\in M^1(C_q^B)$ 
 with finite $m$-th moments.
 Then the spherical Fourier transform
$$\tilde\nu:\mathbb R^q\to\mathbb C,\quad
 \lambda\mapsto\int_{C_q^B} \phi_{-i\rho-\lambda}^p(t)\> d\nu(t)$$
is $m$-times continuously partially differentiable,
 and for  $l\in\mathbb N_0^n$ with $|l|\le m$,
\begin{equation}
\frac{\partial^{|l|}}{\partial\lambda^l}\tilde\nu(\lambda)
=\int_{C_q^B}\frac{\partial^{|l|}}{\partial\lambda^l} 
\phi_{-i\rho-\lambda}^p(t)\> d\nu(t).
\end{equation}
In particular,
\begin{equation}
\frac{\partial^{|l|}}{\partial\lambda^l}\tilde\nu(0)=(-i)^{|l|} \int_{C_q^B}
 m_l(t)\> d\nu(t).
\end{equation}
\end{lemma}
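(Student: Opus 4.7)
The plan is an induction on $m$ that parallels the proof of Lemma \ref{differentiable-a} in the $A$-case, with the only $BC$-specific input being Lemma \ref{absch-abl-bc} together with the integral representation \eqref{phi-int-kurz-bc}. For the base case $m = 0$ the claim reduces to continuity of $\tilde\nu$, which follows from dominated convergence once one notes that $|\phi^p_{-i\rho^{BC}-\lambda}(t)| \le 1$ for $\lambda \in \mathbb R^q$: inside \eqref{phi-int-kurz-bc}, the power $\Delta_{(i(-i\rho^{BC}-\lambda) - \rho^{BC})/2}(g(t,u,w)) = \Delta_{-i\lambda/2}(g(t,u,w))$ has modulus $1$ since its real exponents all vanish, and the integrand is continuous in $\lambda$.

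For the inductive step, assume the conclusion holds for all multiindices of order at most $m$, and suppose $\nu$ admits finite $(m+1)$-th moments. Fix $l \in \mathbb N_0^q$ with $|l| = m+1$, write $l = l' + e_j$ with $|l'| = m$; the induction hypothesis gives
\begin{equation*}
\frac{\partial^{|l'|}}{\partial\lambda^{l'}}\tilde\nu(\lambda) \,=\, \int_{C_q^B}\frac{\partial^{|l'|}}{\partial\lambda^{l'}} \phi^p_{-i\rho^{BC}-\lambda}(t)\, d\nu(t).
\end{equation*}
To differentiate once more in $\lambda_j$ under the integral sign, invoke the standard parameter-integral criterion: one needs a $\nu$-integrable majorant for the next partial derivative that is locally uniform in $\lambda$. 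Lemma \ref{absch-abl-bc} supplies exactly such a majorant, namely the $\lambda$-independent function $m_l(t)$, and the moment hypothesis on $\nu$ makes it $L^1(\nu)$. Continuity of the resulting partial derivative in $\lambda$ then follows by dominated convergence against the same majorant, using continuity of the integrand in $\lambda$, which is clear from \eqref{phi-int-kurz-bc}.

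Once the interchange formula holds for all $|l| \le m+1$, the evaluation at $\lambda = 0$ is a brief chain-rule computation: by \eqref{def-m1-bc},
\begin{equation*}
m_l(t) \,=\, \frac{\partial^{|l|}}{\partial\mu^l}\phi^p_{-i\rho^{BC} - i\mu}(t)\Big|_{\mu = 0} \,=\, i^{|l|}\,\frac{\partial^{|l|}}{\partial\lambda^l}\phi^p_{-i\rho^{BC}-\lambda}(t)\Big|_{\lambda = 0},
\end{equation*}
so $\partial^{l}_\lambda \phi^p_{-i\rho^{BC}-\lambda}(t)|_{\lambda = 0} = (-i)^{|l|} m_l(t)$, and integrating against $\nu$ yields the stated formula.

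No genuine obstacle appears; the one point worth mentioning explicitly is the global (in $\lambda$) validity of the majorant from Lemma \ref{absch-abl-bc}, which is crucial for allowing the differentiation-under-the-integral-sign argument on all of $\mathbb R^q$ and not just at $\lambda = 0$. This is immediate from its proof, since the bound obtained there does not depend on the imaginary shift $-\lambda$ of the spectral parameter.
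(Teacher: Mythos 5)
Your proof is correct and follows essentially the same route the paper intends: the paper omits the proof of Lemma~\ref{differentiable-bc} precisely because it is the verbatim analogue of Lemma~\ref{differentiable-a}, namely an induction on $m$ using the $\lambda$-independent majorant $m_l(t)$ from Lemma~\ref{absch-abl-bc} to justify differentiation under the integral, plus the chain-rule bookkeeping that produces the factor $(-i)^{|l|}$. Your closing remark about the global validity of the majorant is a welcome clarification of the phrase ``well-known result about parameter integrals'' in the A-case proof, but it is not a departure from the paper's argument.
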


We now turn to the proof of the CLT:

 \begin{proof}[Proof of  Theorem \ref{clt-bc}]
 Let $\nu\in M^1(C_q^B)$ be a 
 probability measure with finite second modified moments,
 $p\in[ 2q-1,\infty[$, and  $d=1,2,4$.
As described in Section 3 we consider the associated time-homogeneous 
random walk $(\tilde S_k)_{k\ge0}$ on $C_q^B$. Then, as described there, 
 the distributions of
$\tilde S_k$ are given as the convolution powers $\nu^{(k)}$ w.r.t.  $*_p$.
With this observation in mind, we can just use the results of Section 6
and Lemmas \ref{absch-abl-bc} and \ref{differentiable-bc} instead of the
results of Section 5 and Lemmas \ref{absch-abl-a} and \ref{differentiable-a},
respectively in order to complete
 the proof in the same way as for the CLT \ref{clt-a}.
\end{proof}

Finally, the strong law of large numbers  \ref{lln-bc} can be proved 
by the same methods as the strong law  \ref{lln-a} in Section 7 by using the 
integral representation (\ref{def-m1-bc}) of the moment functions instead of
(\ref{moment-function-a}).

\end{document}